\documentclass{amsart}
\usepackage{amsmath,amsthm,amsfonts,amssymb,enumitem}

\newtheorem{thm}{Theorem}[section]
\newtheorem{cor}[thm]{Corollary}
\newtheorem{prop}[thm]{Proposition}
\newtheorem{lem}[thm]{Lemma}

\theoremstyle{definition}
\newtheorem{defn}[thm]{Definition}
\newtheorem{exmp}[thm]{Example}

\newcommand\ncrank{\operatorname{nc-rank}}

\title[Linear and matrix min-max theorems]{Linear and matrix generalizations of some combinatorial min-max theorems}
\author{Nik Weaver}
\date{May 2026}

\begin{document}

\keywords{combinatorial min-max theorems, noncommutative rank, nilpotent operator algebras, Hall's marriage theorem, Menger's theorem, coherent paths}
\subjclass{primary 15A03, 05C50; secondary 15A30, 05D15, 05C40}

\begin{abstract}
We review known linear and matrix generalizations of Hall's classic ``marriage theorem'' and Kőnig's theorem on partial matchings in bipartite graphs, and relate them to linear and matrix generalizations of Dilworth's theorem about chains and antichains in posets and Menger's theorem about disjoint paths in directed graphs.
\end{abstract}

\maketitle

\section{The linear setting}

\subsection{Hall's theorem}

Hall's marriage theorem \cite{hall}, in its most memorable (albeit informal, and perhaps outdated) formulation \cite{HV}, says that given $n$ boys and $m$ girls, if for every $1 \leq k \leq n$ every set of $k$ boys collectively know at least $k$ girls, then the boys can be married off in such a way that each boy marries a girl he knows.

This theorem can be generalized to the linear setting. Let the scalar field be $\mathbb{F} = \mathbb{R}$ or $\mathbb{C}$. Given a relation $R \subseteq \mathbb{F}^n \times \mathbb{F}^m$ with $n \leq m$, define the {\it neighborhood} of a nonzero vector $u \in \mathbb{F}^n$ relative to $R$ to be the set $$N(u) = \{w \in \mathbb{F}^m:\mbox{ there exists }v \in \mathbb{F}^n\mbox{ with }(v,w) \in R\mbox{ and }u \not\perp v\},$$ and let the neighborhood of any set $S \subseteq \mathbb{F}^n$ be $$N(S) = \bigcup_{u \in S} N(u) \subseteq \mathbb{F}^m.$$ We define a {\it saturated matching} in $R$ to be a set of pairs $(v_1,w_1), \ldots, (v_n,w_n) \in R$ such that the $v_i$'s constitute a basis of $\mathbb{F}^n$ and the $w_i$'s are linearly independent in $\mathbb{F}^m$.

\begin{thm}\label{qhmt} (Linear marriage theorem)
Let $m \geq n \geq 1$ and let $R \subset \mathbb{F}^n \times \mathbb{F}^m$ be finite. Then there is a saturated matching in $R$ if and only if $${\rm dim}\, {\rm span}(N(S)) \geq {\rm dim}\, {\rm span}(S)$$ for every finite subset $S$ of $\mathbb{F}^n$.
\end{thm}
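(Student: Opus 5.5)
The plan is to treat this as a matroid intersection problem on the finite ground set $R$, with the easy direction handled by a dimension count and the hard direction by Edmonds' matroid intersection theorem. Let $M_1$ be the linear matroid on $R$ in which $A \subseteq R$ is independent when the first coordinates $\{v : (v,w) \in A\}$ are linearly independent (counted with multiplicity, so repeated $v$'s are dependent). Let $M_2$ be the analogous matroid for the second coordinates. A saturated matching is then exactly a common independent set of size $n$, since $n$ independent $v$'s in $\mathbb{F}^n$ form a basis. Write $r_1, r_2$ for the rank functions, so $r_1(A) = \dim\,{\rm span}\{v : (v,w)\in A\}$, and similarly for $r_2$.

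\emph{Necessity.} Suppose $(v_1,w_1),\ldots,(v_n,w_n)$ is a saturated matching and $S$ is finite with $U = {\rm span}(S)$ of dimension $k$. Let $I$ be the set of indices $i$ with $v_i \perp U$. The vectors $v_i$ for $i \in I$ are independent and lie in $U^\perp$, which has dimension $n-k$, so $|I| \leq n-k$. For each $i \notin I$, $v_i$ fails to be orthogonal to some element $u$ of $S$, because $S$ spans $U$. Hence $w_i \in N(u) \subseteq N(S)$. This gives at least $k$ of the linearly independent $w_i$ inside $N(S)$, so $\dim\,{\rm span}(N(S)) \geq k$.

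\emph{Sufficiency.} By Edmonds' matroid intersection theorem (equivalently, Rado's theorem), the maximum size of a common independent set equals
$$\min_{A \subseteq R}\bigl(r_1(A) + r_2(R\setminus A)\bigr).$$
So it suffices to show $r_1(A) + r_2(R \setminus A) \geq n$ for every $A \subseteq R$.

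Fix $A$ and let $V_A$ be the span of the first coordinates of pairs in $A$. Take $S$ to be a basis of $V_A^\perp$; this is finite, of dimension $n - r_1(A)$, and possibly empty, in which case the bound is trivial. For any $u \in S$, each pair $(v,w) \in R$ with $u \not\perp v$ must have $v \notin V_A$. Therefore $(v,w) \notin A$, and $N(S)$ consists only of second coordinates of pairs in $R\setminus A$. The hypothesis then gives
$$r_2(R\setminus A) \;\geq\; \dim\,{\rm span}(N(S)) \;\geq\; \dim\,{\rm span}(S) \;=\; n - r_1(A).$$
This is the required inequality, so a common independent set of size $n$ exists, and it is a saturated matching.

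The main step is recognizing the matroid intersection structure and choosing $S$ to be a basis of the orthogonal complement $V_A^\perp$, so that the neighborhood condition translates into the rank inequality. Some care is needed in the complex case: there $\perp$ refers to the Hermitian inner product, and $V_A^\perp$ must be defined with respect to that same inner product so that $\dim V_A^\perp = n - \dim V_A$ still holds. Beyond that, everything rests on citing matroid intersection. If a self-contained argument were preferred, one could instead run an augmenting-path argument on the two linear matroids, but I would simply invoke Edmonds.
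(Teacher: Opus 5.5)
Your proof is correct and follows the paper's route: the necessity direction is the same orthocomplement dimension count, and for sufficiency the paper says only that the converse is easily deduced from Edmonds' matroid intersection theorem, which is what you do, with the reduction written out (taking $S$ to be a basis of $V_A^\perp$ to get $r_1(A)+r_2(R\setminus A)\geq n$). The paper also sketches an elementary Halmos--Vaughan-style induction on $n$ and $|R|$ as an alternative to citing Edmonds, but only as an aside.
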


For the forward direction, suppose $(v_1, w_1), \ldots, (v_n,w_n)$ is a saturated matching in $R$. For any finite $S \subset \mathbb{F}^n$, if ${\rm dim}\, {\rm span}(S) = k$ then the orthocomplement of ${\rm span}(S)$ has dimension $n - k$ and hence can contain at most $n - k$ of the $v_i$'s. This means that at least $k$ of the $v$'s contribute to $N(S)$, i.e., $N(S)$ contains at least $k$ of the $w_i$'s and hence its span has dimension at least $k$. Thus ${\rm dim}\, {\rm span}(N(S)) \geq {\rm dim}\, {\rm span}(S)$ for every $S$. The converse is easily deduced from Edmond's matroid intersection theorem \cite[Theorem (69)]{edmonds} (cf.\ \cite[Theorem 1*]{lovasz}), but it is amusing to note that one can also give an elementary proof that follows the pattern of Halmos and Vaughan's inductive proof of the classical marriage theorem \cite{HV}: assume the theorem is true for all smaller values of $n$ and, for the given value of $n$, for all smaller $|R|$ (= the cardinality of $R$), then separate into the two cases where (1) for every $1 \leq k < n$, every linearly independent set $S \subset \mathbb{F}^n$ of cardinality $k$ satisfies ${\rm dim}\, {\rm span}(N(S)) \geq k + 1$ and (2) for some $1 \leq k < n$ there is a linearly independent set $S \subset \mathbb{F}^n$ of cardinality $k$ with ${\rm dim}\, {\rm span}(N(S)) = k$ exactly. We can then invoke the induction hypothesis, in the first case for smaller $|R|$ and in the second case for smaller $n$. The interested reader is invited to fill in the missing details.

In the special case where $n = m$, we get a {\it perfect matching} in $R$, where the $v_i$'s and the $w_i$'s both form bases of $\mathbb{F}^n$ and the map $v_i \mapsto w_i$ extends to a linear isomorphism of $\mathbb{F}^n$ with itself, rather than merely a linear injection from $\mathbb{F}^n$ into $\mathbb{F}^m$.

We can draw several consequences from Theorem \ref{qhmt}. First, the classical marriage theorem follows easily from it. Given a relation $r \subseteq \{1, \ldots, n\} \times \{1, \ldots, m\}$, let the {\it neighborhood} of a set $s \subseteq \{1, \ldots, n\}$ relative to $r$ be the set $n(s) = \{j \in \{1, \ldots, m\}$: there exists $i \in s$ with $(i,j) \in r\}$. Let a {\it saturated matching} in $r$ be a set of pairs $(i_1, j_1), \ldots, (i_n, j_n) \in r$ such that $\{i_1, \ldots, i_n\} = \{1, \ldots, n\}$ and the $j$'s are distinct.

\begin{cor} \label{qhmtc} (Hall's theorem)
Let $m \geq n \geq 1$ and let $r \subseteq \{1, \ldots, n\}\times \{1, \ldots, m\}$. Then there is a saturated matching in $r$ if and only if $|n(s)| \geq |s|$ for all $s \subseteq \{1, \ldots, n\}$.
\end{cor}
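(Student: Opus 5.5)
The plan is to deduce Corollary \ref{qhmtc} from Theorem \ref{qhmt} by encoding $r$ as a relation of standard basis vectors. Let $e_1,\ldots,e_n$ and $f_1,\ldots,f_m$ be the standard bases of $\mathbb{F}^n$ and $\mathbb{F}^m$, and set
$$R = \{(e_i,f_j) : (i,j) \in r\},$$
which is finite. The forward direction is immediate: if $(i_1,j_1),\ldots,(i_n,j_n)$ is a saturated matching in $r$, then for any $s$ the $j_k$ with $i_k \in s$ are $|s|$ distinct elements of $n(s)$. So the real content is the converse, and for that I need to verify the hypothesis of Theorem \ref{qhmt} for $R$ and then translate the resulting saturated matching back to $r$.

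For the hypothesis, I first compute neighborhoods. For nonzero $u \in \mathbb{F}^n$, let $\operatorname{supp}(u) = \{i : u_i \neq 0\}$. Since $u \not\perp e_i$ exactly when $u_i \neq 0$, we get
$$N(u) = \{f_j : j \in n(\operatorname{supp}(u))\}.$$
Hence for finite $S$, writing $s = \bigcup_{u \in S}\operatorname{supp}(u)$, the set $N(S)$ is $\{f_j : j \in n(s)\}$, so $\dim\operatorname{span}(N(S)) = |n(s)|$. On the other hand, $S \subseteq \operatorname{span}\{e_i : i \in s\}$, so $\dim\operatorname{span}(S) \le |s|$. The Hall hypothesis then gives
$$\dim\operatorname{span}(N(S)) = |n(s)| \ge |s| \ge \dim\operatorname{span}(S).$$
(If $S$ contains the zero vector, it affects neither side, since $N$ is only defined for nonzero vectors.) Theorem \ref{qhmt} therefore yields a saturated matching $(v_1,w_1),\ldots,(v_n,w_n)$ in $R$.

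To translate back, note that each pair has the form $v_k = e_{i_k}$, $w_k = f_{j_k}$ with $(i_k,j_k) \in r$. The $v_k$ form a basis of $\mathbb{F}^n$, so they are $n$ distinct standard basis vectors, which gives $\{i_1,\ldots,i_n\} = \{1,\ldots,n\}$. The $w_k$ are linearly independent, so they are distinct, which means the $j_k$ are distinct. This is exactly a saturated matching in $r$. I expect no real obstacle. The one point needing care is the inequality $\dim\operatorname{span}(S) \le |s|$, obtained through supports, because an arbitrary $S$ need not consist of basis vectors.
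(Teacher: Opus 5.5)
Your proof is correct and follows the paper's argument essentially verbatim. The paper also encodes $r$ as pairs of standard basis vectors and takes $s=\{i : e_i \not\perp S\}$, which is exactly your union of supports, to get ${\rm dim}\,{\rm span}(S)\le |s|$ and ${\rm dim}\,{\rm span}(N(S))=|n(s)|$; it then reads the matching in $r$ off the saturated matching supplied by Theorem \ref{qhmt}, with the forward direction and the translation back treated as immediate where you spell them out.
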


\begin{proof}
The forward implication is trivial. For the reverse implication, assume $|n(s)| \geq |s|$ for all $s \subseteq \{1, \ldots, n\}$ and let $R \subset \mathbb{F}^n \times \mathbb{F}^m$ be the set of pairs $(e_i, e_j)$ with $(i,j) \in r$, where $(e_i)$ are the standard basis vectors. For any finite subset $S$ of $\mathbb{F}^n$, let $s = \{i \in \{1, \ldots, n\}: e_i \not\perp S\}$. Then $S \subseteq {\rm span}\{e_i: i \in s\}$, so ${\rm dim}\, {\rm span}(S) \leq |s|$, and $N(S) = \{e_j: j \in n(s)\}$, so ${\rm dim}\, {\rm span}(N(S)) = |n(s)|$. Thus the cardinality condition on $s$ assumed here entails the dimension condition on $S$ from Theorem \ref{qhmt}, and we conclude from that result that there is a saturated matching in $R$ in the sense used there. This immediately implies that there is a saturated matching in $r$.
\end{proof}

Theorem \ref{qhmt} is essentially due to Lovász, cf.\ \cite[Theorem 1*]{lovasz}. His formulation was couched in terms of the maximum rank of matrices in linear subspaces of $M_{m,n} = M_{m,n}(\mathbb{F})$. The connection to Theorem \ref{qhmt} comes from the fact that every pair $(v,w) \in \mathbb{F}^n\times\mathbb{F}^m$ induces a rank one linear transformation $wv^*: u \mapsto \langle u,v\rangle w$ from $\mathbb{F}^n$ to $\mathbb{F}^m$.

For any linear subspace $\mathcal{V}$ of $M_{m,n}$ and any linear subspace $E$ of $\mathbb{F}^n$, write
$$\mathcal{V}[E] = {\rm span}\{Av: A \in \mathcal{V}\,\,{\rm and}\,\, v \in E\} \subseteq \mathbb{F}^m.$$ Observe that if $\mathcal{V}_R$ is the span of $\{wv^*: (v,w) \in R\}$, then for any subset $S$ of $\mathbb{F}^n$ we have $${\rm span}(N(S)) = \mathcal{V}_R[{\rm span}(S)].$$

We can infer Lovász's version of Theorem \ref{qhmt} using the following fact.

\begin{prop}\label{lafact}
Let $A_1, \ldots, A_d$ be rank one matrices in $M_{m,n}$ with $A_i = w_iv_i^*$ ($1 \leq i \leq d$). Then $A_1 + \cdots + A_d$ has rank $d$ if and only if $v_1$, $\ldots$, $v_d$ are linearly independent in $\mathbb{F}^n$ and $w_1$, $\ldots$, $w_d$ are linearly independent in $\mathbb{F}^m$.
\end{prop}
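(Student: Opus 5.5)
The plan is to write the sum as a product of two matrices and compare ranks. Let $W \in M_{m,d}$ be the matrix whose columns are $w_1, \ldots, w_d$, and let $V \in M_{n,d}$ be the matrix whose columns are $v_1, \ldots, v_d$. Then
$$A_1 + \cdots + A_d = \sum_{i=1}^d w_i v_i^* = W V^*,$$
where $V^* \in M_{d,n}$ is the conjugate transpose (just the transpose when $\mathbb{F} = \mathbb{R}$). The statement then reduces to showing that ${\rm rank}(WV^*) = d$ holds exactly when ${\rm rank}(W) = d$ and ${\rm rank}(V) = d$. Note that ${\rm rank}(V) = d$ means the $v_i$ are linearly independent, and ${\rm rank}(W) = d$ means the $w_i$ are.

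For the forward direction I will use ${\rm rank}(WV^*) \leq \min({\rm rank}\, W, {\rm rank}\, V^*)$ together with ${\rm rank}\, V^* = {\rm rank}\, V$, both of which are at most $d$. So rank $d$ for the sum forces both $W$ and $V$ to have rank $d$, which is the required linear independence. This also covers the degenerate case where $d > \min(m,n)$, since then the sum cannot have rank $d$ and the vectors cannot be independent.

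For the reverse direction, suppose the $v_i$ are linearly independent. Then $V^*\colon \mathbb{F}^n \to \mathbb{F}^d$, which sends $u \mapsto (\langle u, v_i\rangle)_i$, is surjective. To see this, note that the kernel of $V^*$ is ${\rm span}\{v_i\}^\perp$, which has dimension $n-d$, so the image has dimension $d$. If the $w_i$ are also independent, then $W$ is injective on $\mathbb{F}^d$. Hence the image of $WV^*$ is $W(\mathbb{F}^d)$, which has dimension $d$.

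There is no real obstacle here. The only point needing a moment's care is the surjectivity of $V^*$, which is handled by the orthogonal-complement dimension count above (equivalently, row rank equals column rank).
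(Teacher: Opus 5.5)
Your proof is correct and is essentially the paper's argument recast through the factorization $A_1+\cdots+A_d = WV^*$: the paper's forward direction is exactly the bound ${\rm rank}(WV^*) \le \min({\rm rank}\,W, {\rm rank}\,V)$, spelled out as ${\rm span}(v_1,\ldots,v_d)^\perp \subseteq \ker(A_1+\cdots+A_d)$ and ${\rm im}(A_1+\cdots+A_d) \subseteq {\rm span}(w_1,\ldots,w_d)$. For the converse, the paper uses independence of the $w_i$'s to show the kernel of the sum is ${\rm span}(v_1,\ldots,v_d)^\perp$, whereas you compute the image via surjectivity of $V^*$ and injectivity of $W$; both rest on the same dimension count $n-d$ for ${\rm span}(v_1,\ldots,v_d)^\perp$.
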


\begin{proof}
If $v_1$, $\ldots$, $v_d$ are linearly dependent then their span has dimension at most $d - 1$ and the orthocomplement, ${\rm span}(v_1, \ldots, v_d)^\perp$, has dimension at least $n - (d - 1)$. This orthocomplement is contained in the kernel of each $A_i$ and hence is also contained in the kernel of $A_1 + \cdots + A_d$, showing that the rank of the sum is at most $d - 1$. Similarly, the image of $A_1 + \cdots + A_d$ is contained in the span of $w_1$, $\ldots$, $w_d$, so if those vectors are linearly dependent then the image of the sum can have dimension at most $d-1$, again showing that its rank is at most $d - 1$.

Conversely, suppose $v_1$, $\ldots$, $v_d$ and $w_1$, $\ldots$, $w_d$ are both linearly independent. Let $u$ be any vector in the kernel of $A_1 + \cdots + A_d$. Then $$\langle u, v_1\rangle w_1 + \cdots + \langle u, v_d\rangle w_d = 0,$$ so linear independence of the $w_i$'s entails that $$\langle u, v_1\rangle = \cdots = \langle u, v_d\rangle = 0,$$ and hence $u \in {\rm span}(v_1, \ldots, v_d)^\perp$. We conclude that $${\rm ker}(A_1 + \cdots + A_d) = {\rm span}(v_1, \ldots, v_d)^\perp$$ (as the reverse containment is trivial). Since the $v_i$'s are linearly independent the dimension of their span is $d$, so this shows that the kernel of $A_1 + \cdots + A_d$ has dimension exactly $n -d$, which implies that $A_1 + \cdots + A_d$ has rank $d$.
\end{proof}

\begin{cor} \label{qhmtl} (Lovász's theorem)
Let $m \geq n \geq 1$ and let $\mathcal{V}$ be a linear subspace of $M_{m,n}$. Suppose $\mathcal{V}$ is generated by rank one matrices. Then $\mathcal{V}$ contains a rank $n$ matrix if and only if $${\rm dim}(\mathcal{V}[E]) \geq {\rm dim}(E)$$ for every linear subspace $E$ of $\mathbb{F}^n$.
\end{cor}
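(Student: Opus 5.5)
We deduce Corollary \ref{qhmtl} from Theorem \ref{qhmt} together with Proposition \ref{lafact}, translating between rank one generators and pairs of vectors. Since $\mathcal{V}$ is generated by rank one matrices, choose finitely many rank one matrices $w_1v_1^*, \ldots, w_Nv_N^*$ spanning $\mathcal{V}$; this is possible because $\mathcal{V}$ is finite dimensional, so a finite subset of any spanning set of rank one matrices already spans it. Put $R = \{(v_i,w_i): 1 \leq i \leq N\}$. Then $R$ is a finite subset of $\mathbb{F}^n \times \mathbb{F}^m$ and $\mathcal{V}_R = \mathcal{V}$. By the observation preceding Proposition \ref{lafact}, for every subset $S$ of $\mathbb{F}^n$ we have ${\rm span}(N(S)) = \mathcal{V}[{\rm span}(S)]$.

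For the forward direction, suppose $A \in \mathcal{V}$ has rank $n$, so $A$ is injective. Let $E$ be any subspace of $\mathbb{F}^n$. Then $\mathcal{V}[E] \supseteq A(E)$, and $\dim A(E) = \dim E$. This direct argument avoids Theorem \ref{qhmt} entirely.

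For the reverse direction, assume $\dim(\mathcal{V}[E]) \geq \dim(E)$ for every subspace $E$. Given any finite $S \subset \mathbb{F}^n$, take $E = {\rm span}(S)$. Then
$$\dim {\rm span}(N(S)) = \dim \mathcal{V}[E] \geq \dim E = \dim {\rm span}(S).$$
So Theorem \ref{qhmt} produces a saturated matching $(v_{i_1},w_{i_1}), \ldots, (v_{i_n},w_{i_n})$ in $R$: the $v$'s form a basis of $\mathbb{F}^n$ and the $w$'s are linearly independent in $\mathbb{F}^m$. By Proposition \ref{lafact} with $d = n$, the matrix $\sum_k w_{i_k}v_{i_k}^*$ has rank $n$. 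This matrix lies in $\mathcal{V}$ because each summand is one of the generators.

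The main point needing care is the identity ${\rm span}(N(S)) = \mathcal{V}_R[{\rm span}(S)]$, which the paper only states. I would verify it briefly as follows. For $u \in {\rm span}(S)$, $Au$ is a linear combination of vectors $w_iv_i^*u = \langle u,v_i\rangle w_i$. The vector $w_i$ appears with nonzero coefficient $\langle u, v_i\rangle$ only if $u \not\perp v_i$, and this forces some $s \in S$ with $s \not\perp v_i$, since $u$ is a combination of elements of $S$. Hence $Au \in {\rm span}(N(S))$. Conversely, if $w_i \in N(s)$ for some $s \in S$, then $w_i$ is a nonzero multiple of $w_iv_i^*s$, which lies in $\mathcal{V}_R[{\rm span}(S)]$.

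One more point needs checking: $N(S)$ is defined as the set of \emph{all} $w$ paired with some suitable $v$. If a given $w$ arises from several pairs of $R$, it still lies in the span above, so the identity holds with this definition as well.
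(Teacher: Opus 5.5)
Your proposal is correct and follows the paper's proof essentially step for step. For the forward direction you use injectivity of a rank $n$ matrix, and for the reverse direction you write $\mathcal{V}=\mathcal{V}_R$ for a finite $R$, transfer the dimension condition through ${\rm span}(N(S))=\mathcal{V}_R[{\rm span}(S)]$, apply Theorem \ref{qhmt}, and sum the matched rank one matrices using Proposition \ref{lafact}; your explicit check of that span identity, which the paper only asserts, is also correct.
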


\begin{proof}
If $\mathcal{V}$ contains a rank $n$ matrix $A$, then ${\rm ker}(A) = \{0\}$ and so $${\rm dim}(\mathcal{V}[E]) \geq {\rm dim}(A(E)) = {\rm dim}(E)$$ for every linear subspace $E$ of $\mathbb{F}^n$; this establishes the forward implication. For the reverse implication, since $\mathcal{V}$ is generated (i.e., spanned) by rank one matrices, it has the form $$\mathcal{V}_R = {\rm span}\{wv^*: (v,w) \in R\}$$ for some finite $R \subset \mathbb{F}^n \times \mathbb{F}^m$, and the comment made before the proposition just above then shows that the dimension inequality assumed here implies the one in Theorem \ref{qhmt}. Thus we can infer from that result the existence of a saturated matching $(v_1,w_1), \ldots, (v_n,w_n)$ in $R$. According to Proposition \ref{lafact} the matrix $w_1v_1^* + \cdots + w_nv_n^*$ is then the desired rank $n$ matrix in $\mathcal{V}_R$.
\end{proof}

The reverse implication from Corollary \ref{qhmtl} to  Theorem \ref{qhmt} can be drawn using another simple linear algebraic fact.

\begin{prop}\label{lafact2}
Let $A_1, \ldots, A_r$ be rank one matrices in $M_{m,n}$. Suppose that some linear combination of the $A_i$'s has rank $d$. Then there are $d$ distinct indices $i_1, \ldots, i_d$ such that $A_{i_1} + \cdots + A_{i_d}$ has rank $d$.
\end{prop}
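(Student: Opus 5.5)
Write $A_i = w_i v_i^*$. Suppose $B = \sum_i c_i A_i$ has rank $d$. We will first reduce to a linear combination of exactly $d$ of the $A_i$'s with nonzero coefficients. After that, we will absorb the coefficients into the vectors and invoke Proposition \ref{lafact}. The absorbing step needs care, because the statement asks for the plain sum $A_{i_1}+\cdots+A_{i_d}$ and not a weighted sum.

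\emph{Step 1 (choosing the indices).} Discard the indices with $c_i = 0$ and write $B = \sum_{i\in I} (c_i w_i) v_i^*$. The image of $B$ is contained in ${\rm span}\{w_i : i\in I\}$, and $B^*$ is a combination of the $v_i w_i^*$, so the image of $B^*$ lies in ${\rm span}\{v_i : i \in I\}$. Hence both spans have dimension at least $d$. That alone does not give a common set of $d$ indices on which both families are independent. The cleanest route is to factor $B$. Let $W$ be the $m\times |I|$ matrix with columns $c_i w_i$ and $V$ the $n\times |I|$ matrix with columns $v_i$, so $B = WV^*$. Since ${\rm rank}(WV^*) = d$, the Cauchy--Binet formula applied to some $d\times d$ minor of $B$ gives
$$\det (B)_{P,Q} = \sum_{|J|=d} \det W_{P,J}\,\overline{\det V_{Q,J}},$$
and this is nonzero for suitable row sets $P$ and $Q$. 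Therefore some single term is nonzero. That term gives $J=\{i_1,\dots,i_d\}$ with $\{c_{i}w_{i}\}_{i\in J}$ independent (so $c_i\neq 0$ and the $w_i$ are independent) and $\{v_i\}_{i\in J}$ independent. This Cauchy--Binet selection is the main step. A purely matroidal exchange argument could replace it, but Cauchy--Binet seems quickest. Over $\mathbb{R}$ conjugation is trivial, and over $\mathbb{C}$ the conjugation is harmless.

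\emph{Step 2 (removing coefficients).} Now apply Proposition \ref{lafact} to the plain sum $\sum_{i\in J} A_i = \sum_{i\in J} w_i v_i^*$. By Step 1 the $v_i$ ($i\in J$) are independent and the $w_i$ ($i\in J$) are independent, since scaling by the nonzero $c_i$ does not affect independence. So the plain sum has rank $d$.

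One subtlety is that the factorization $A_i = w_iv_i^*$ is only unique up to scalars. Independence of the families is unaffected by such rescaling, so Proposition \ref{lafact} applies regardless of which factorization is chosen. This yields the required distinct indices $i_1,\dots,i_d$.
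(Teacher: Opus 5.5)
Your proof is correct, and it takes a somewhat different route from the paper's. The paper first compresses to the square case by replacing each $A_i$ with $QA_iP$, where $P$ and $Q$ project onto $\ker(A)^\perp$ and ${\rm im}(A)$, so that $m=n=d$. It then considers the formal polynomial $p(x_1,\ldots,x_r)=\det(x_1A_1+\cdots+x_rA_r)$, which is nonzero and homogeneous of degree $d$. Multilinearity in the columns, together with the fact that columns of a rank one matrix are pairwise dependent, shows that only squarefree monomials survive. Evaluating at the indicator vector of a surviving monomial $x_{i_1}\cdots x_{i_d}$ then gives $\det(A_{i_1}+\cdots+A_{i_d})\neq 0$ directly, without using Proposition~\ref{lafact}. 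You instead factor $B=WV^*$ and expand a nonvanishing $d\times d$ minor by Cauchy--Binet. This produces an index set $J$ on which both $\{w_i\}$ and $\{v_i\}$ are independent, and you let Proposition~\ref{lafact} convert that into the rank of the unweighted sum. The two arguments examine the same expansion: by Cauchy--Binet, the coefficient of $x_{i_1}\cdots x_{i_d}$ in the paper's polynomial is the determinant of the compressed vectors $Qw_{i_1},\ldots,Qw_{i_d}$ times the conjugate of the determinant of $Pv_{i_1},\ldots,Pv_{i_d}$. Your version avoids formal variables and the compression step. It also exhibits the matching structure explicitly (simultaneous independence of both families on $J$), which is the form in which the paper later combines Propositions~\ref{lafact} and \ref{lafact2}. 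The paper's version is self-contained and obtains nonsingularity of the plain sum in one stroke.
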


\begin{proof}
Let $A$ be a linear combination of the $A_i$'s whose rank is $d$. Letting $P$ be the orthogonal projection of $\mathbb{F}^n$ onto ${\rm ker}(A)^\perp$ and $Q$ the orthogonal projection of $\mathbb{F}^m$ onto ${\rm im}(A)$, and replacing each $A_i$ with $QA_iP$ (or discarding it if $QA_iP = 0$), we can reduce to the case where $m = n = d$.

Now consider the polynomial $$p(x_1, \ldots, x_r) = {\rm det}(x_1A_1 + \cdots + x_rA_r).$$ This is a homogeneous polynomial of degree $d$ ($= m = n$). By hypothesis, it is not identically zero. Therefore there exist indices $i_1, \ldots, i_d$ such that the coefficient of $x_{i_1}\cdots x_{i_d}$ in $p(x_1, \ldots, x_d)$ is nonzero. Moreover, the determinant of $x_1A_1 + \cdots + x_rA_r$ is a multilinear function of the columns of $x_1A_1 + \cdots + x_rA_r$, and since any two columns of a rank one matrix are linearly dependent, the coefficient of any monomial with repeated indices will be zero. In particular, the indices $i_1$, $\ldots$, $i_d$ must be distinct. Setting $x_{i_1} = \cdots = x_{i_d} = 1$ and letting all other $x_i$'s be $0$, we see that ${\rm det}(A_{i_1} + \cdots + A_{i_d}) \neq 0$. So this sum has rank $d$.
\end{proof}

Using this result together with Proposition \ref{lafact}, we can infer Theorem \ref{qhmt} from Corollary \ref{qhmtl}.

Interpreting Theorem \ref{qhmt} in terms of matrices yields a slight improvement of that result. The vector space $M_{m,n}$ has dimension $mn$ over $\mathbb{F}$, so if $R$ contains more than $mn$ elements then the matrices $wv^*$ with $(v,w) \in R$ must be linearly dependent. And the equality ${\rm span}(N(S)) = \mathcal{V}_R[{\rm span}(S)]$ when $\mathcal{V}_R$ is the span of $\{wv^*: (v,w) \in R\}$, noted earlier, shows that only the span of the matrices $wv^*$ matters. Thus, even if $R$ is infinite, it can always be reduced to at most $mn$ pairs (whose corresponding rank one matrices are linearly independent) without affecting ${\rm span}(N(S))$ for any $S \subseteq \mathbb{F}^n$. Theorem \ref{qhmt} can then be applied to the reduced $R$. This yields the following.

\begin{cor}\label{imp}
In Theorem \ref{qhmt}, $R \subseteq \mathbb{F}^n\times \mathbb{F}^m$ need not be assumed finite.
\end{cor}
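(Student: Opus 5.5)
The plan is to reduce the infinite case to the finite case of Theorem \ref{qhmt}, replacing $R$ by a finite subset $R_0 \subseteq R$ that leaves all the neighborhoods $N(S)$ unchanged. For each pair $(v,w) \in R$, consider the rank one (or zero) matrix $wv^* \in M_{m,n}$, and let $\mathcal{V}_R$ be the span of all of these. Since $M_{m,n}$ has dimension $mn$, a linearly independent subset of $\{wv^*: (v,w)\in R\}$ spanning $\mathcal{V}_R$ has at most $mn$ elements. So we can choose a finite $R_0 \subseteq R$, with $|R_0| \leq mn$, such that $\mathcal{V}_{R_0} = \mathcal{V}_R$.

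The key step is to check the identity ${\rm span}(N_R(S)) = \mathcal{V}_R[{\rm span}(S)]$ noted earlier, now for arbitrary (possibly infinite) $R$. For $u \in S$ and $(v,w) \in R$, we have $(wv^*)u = \langle u,v\rangle w$. This is a nonzero multiple of $w$ exactly when $u \not\perp v$, which is exactly when $w \in N(u)$. Hence ${\rm span}(N_R(S))$ is the span of the vectors $Au$ with $A$ ranging over the generators of $\mathcal{V}_R$ and $u \in S$. By bilinearity of $(A,u)\mapsto Au$, this equals $\mathcal{V}_R[{\rm span}(S)]$. Nothing in this argument uses finiteness, so the identity holds for $R$ and for $R_0$ alike. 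Since $\mathcal{V}_{R_0} = \mathcal{V}_R$, we get ${\rm span}(N_{R_0}(S)) = {\rm span}(N_R(S))$ for every $S$.

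With this in hand, both directions follow. If the dimension condition holds for $R$, it holds for $R_0$. Theorem \ref{qhmt} then gives a saturated matching in $R_0$, and this is also a saturated matching in $R$. For the forward direction, the argument given after Theorem \ref{qhmt} never used finiteness of $R$, so it applies verbatim. The only real point needing care is the span identity, in particular handling pairs with $w=0$ or $v=0$. Such pairs contribute only the zero vector to $N(S)$, or nothing at all, and only the zero matrix to $\mathcal{V}_R$, so they affect neither side and can simply be discarded.
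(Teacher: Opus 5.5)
Your proposal is correct and is essentially the paper's argument: choose a finite $R_0 \subseteq R$ (at most $mn$ pairs) with $\mathcal{V}_{R_0} = \mathcal{V}_R$, use ${\rm span}(N(S)) = \mathcal{V}_R[{\rm span}(S)]$ to see that no ${\rm span}(N(S))$ changes, and apply Theorem \ref{qhmt} to $R_0$. You also check things the paper leaves implicit: that the span identity and the forward direction never use finiteness, and that degenerate pairs are harmless. The only slip is verbal: $w \in N(u)$ need not come from the particular $v$ at hand, but every element of $N(u)$ comes from some pair with $u \not\perp v$, so the span identity still holds.
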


The original combinatorial formulation of Hall's theorem in \cite{hall} was framed in terms of ``transversals'' rather than perfect matchings: if $S_1, \ldots, S_n$ are sets, and for each $1 \leq k \leq n$ the union of any $k$ of them has cardinality at least $k$, then there exist distinct elements $s_1, \ldots, s_n$ such that $s_i \in S_i$ for all $i$. The marriage interpretation can be recovered by letting $S_i$ be the set of girls that the $i$th boy knows. Thus the graph-theoretic and combinatorial formulations are trivially equivalent. However, the combinatorial formulation suggests a different linear generalization, this one due to Rado \cite[Theorem 1]{rado}.

\begin{cor}\label{qhmtr} (Rado's theorem)
Let $m \geq n \geq 1$ and let $S_1$, $\ldots$, $S_n$ be subsets of $\mathbb{F}^m$. Suppose that for each $1 \leq k \leq n$, the linear span of the union of any $k$ of the $S_i$'s has dimension at least $k$. Then there are linearly independent vectors $w_1, \ldots, w_n \in \mathbb{F}^m$ with $w_i \in S_i$ ($1 \leq i \leq n$).
\end{cor}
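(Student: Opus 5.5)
We derive Rado's theorem from Theorem \ref{qhmt}, using Corollary \ref{imp} so that the sets $S_i$ may be infinite. Let $(e_i)$ be the standard basis of $\mathbb{F}^n$, and set
$$R = \{(e_i, w): 1 \leq i \leq n,\ w \in S_i\} \subseteq \mathbb{F}^n \times \mathbb{F}^m.$$
This choice encodes "$w$ is chosen from $S_i$" as "$w$ is paired with $e_i$." It mirrors the reduction of Hall's theorem in Corollary \ref{qhmtc}, with the sets $\{j : (i,j) \in r\}$ replaced by the $S_i$.

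The first step is to verify the dimension condition of Theorem \ref{qhmt}. Take any finite $S \subseteq \mathbb{F}^n$ and put $s = \{i : e_i \not\perp S\}$. As in the proof of Corollary \ref{qhmtc}, $S \subseteq {\rm span}\{e_i : i \in s\}$, so ${\rm dim}\,{\rm span}(S) \leq |s|$. On the other hand, a pair $(e_i, w) \in R$ contributes $w$ to $N(S)$ exactly when $e_i \not\perp u$ for some $u \in S$, that is, when $i \in s$. Hence
$$N(S) = \bigcup_{i \in s} S_i.$$
If $s = \emptyset$ then $S \subseteq \{0\}$ and there is nothing to check. Otherwise the hypothesis, applied with $k = |s|$, gives ${\rm dim}\,{\rm span}(N(S)) \geq |s| \geq {\rm dim}\,{\rm span}(S)$. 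By Theorem \ref{qhmt}, together with Corollary \ref{imp} if some $S_i$ is infinite, $R$ therefore contains a saturated matching $(v_1,w_1), \ldots, (v_n,w_n)$.

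The second step translates the matching back. Each $v_k$ equals some $e_{i_k}$. Since the $v_k$ form a basis of $\mathbb{F}^n$, they are $n$ distinct vectors, so they are precisely the vectors $e_1, \ldots, e_n$, each appearing once. After reindexing we may assume $v_i = e_i$. Then $w_i \in S_i$ by the definition of $R$, and the $w_i$ are linearly independent by the definition of a saturated matching. This is exactly the conclusion of Rado's theorem.

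The only delicate point is the identity $N(S) = \bigcup_{i\in s} S_i$, together with the edge case $s = \emptyset$. One must also invoke Corollary \ref{imp} rather than Theorem \ref{qhmt} directly, since Rado's theorem places no finiteness assumption on the $S_i$.
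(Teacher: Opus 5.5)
Your proof is correct and follows the paper's own argument: you use the same relation $R=\{(e_i,w): w\in S_i\}$, the same verification that $N(S)=\bigcup_{e_i\not\perp S} S_i$ gives the dimension condition, and the same appeal to Theorem~\ref{qhmt} together with Corollary~\ref{imp}, followed by reindexing so that $v_i=e_i$. Your explicit handling of the case $s=\emptyset$ simply spells out what the paper's parenthetical remark leaves implicit.
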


\begin{proof}
Let $R \subset \mathbb{F}^n\times \mathbb{F}^m$ consist of all pairs $(e_i, v)$ such that $v \in S_i$. The dimension inequality of Theorem \ref{qhmt} then holds by essentially the same argument as the one used in the proof of Corollary \ref{qhmtc}. (If ${\rm dim}(S) = k$ then at least $k$ standard basis vectors are not orthogonal to $S$, so $N(S)$ contains the union of at least $k$ of the $S_i$'s.) Theorem \ref{qhmt}/Corollary \ref{imp} therefore yields the existence of a saturated matching $(v_1, w_1)$, $\ldots$, $(v_n,w_n)$, in which, after reordering, we can assume $v_i = e_i$ for all $i$. Then each $w_i$ belongs to $S_i$, and the vectors $w_1$, $\ldots$, $w_n$ are linearly independent.
\end{proof}

Rado's theorem is the result most commonly referenced in the literature as a ``linear marriage theorem''. It is intermediate in strength between Hall's original theorem and Theorem \ref{qhmt}. As we saw in the proof of Corollary \ref{qhmtc}, Hall's theorem is equivalent to the special case of Theorem \ref{qhmt} when $R$ is contained in $\{e_1, \ldots, e_n\} \times \{e_1, \ldots, e_m\}$; and we see from the proof of Corollary \ref{qhmtr} that Rado's theorem is equivalent to the special case of Theorem \ref{qhmt} when $R$ is contained in $\{e_1, \ldots, e_n\} \times \mathbb{F}^m$. (But I should add that one can also infer Theorem \ref{qhmt} from the matroid generalization Rado gives in \cite[Theorem 3]{rado}.)

\subsection{Kőnig's theorem}

We will need the following strengthening of Theorem \ref{qhmt}/Corollary \ref{imp} that allows for a ``deficiency'' in the condition ${\rm dim}\, {\rm span}(N(S)) \geq {\rm dim}\, {\rm span}(S)$.

\begin{cor}\label{dqhmt}
Let $m,n \geq 1$ and let $R \subseteq \mathbb{F}^n \times \mathbb{F}^m$. Suppose there exists $d \geq 0$ such that $${\rm dim}\, {\rm span}(N(S)) \geq {\rm dim}\, {\rm span}(S) - d$$ for every finite subset $S$ of $\mathbb{F}^n$. Then there exist $(v_1,w_1), \ldots, (v_{n-d},w_{n-d}) \in R$ such that $v_1$, $\ldots$, $v_{n-d}$ are linearly independent in $\mathbb{F}^n$ and $w_1$, $\ldots$, $w_{n-d}$ are linearly independent in $\mathbb{F}^m$.
\end{cor}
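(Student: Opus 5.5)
The plan is the standard deficiency trick from the classical proof of the defect version of Hall's theorem: add $d$ "universal" new targets and apply Theorem \ref{qhmt}/Corollary \ref{imp} to the enlarged relation. If $d \geq n$ there is nothing to prove, so assume $d < n$.

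\textbf{Step 1: enlarge the relation.} Set $m' = m + d$ and identify $\mathbb{F}^{m'} = \mathbb{F}^m \oplus \mathbb{F}^d$, with $f_1, \ldots, f_d$ the standard basis of the new $\mathbb{F}^d$ summand. Define
$$R' = \{(v, w \oplus 0): (v,w) \in R\} \cup \{(e_i, 0 \oplus f_j): 1 \leq i \leq n,\ 1 \leq j \leq d\} \subseteq \mathbb{F}^n \times \mathbb{F}^{m'}.$$
The added pairs say that every basis direction $e_i$ "knows" every new target $f_j$.

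\textbf{Step 2: check the hypothesis of Theorem \ref{qhmt}.} We need $m' \geq n$; if $m+d < n$, pad further with extra zero coordinates in the first summand, which changes nothing below. Take any finite nonzero $S \subseteq \mathbb{F}^n$ with span of dimension at least $1$. Some $e_i$ is not orthogonal to $S$, so the neighborhood $N'(S)$ relative to $R'$ contains all of $0 \oplus f_1, \ldots, 0 \oplus f_d$. It also contains $N(S) \oplus 0$. Hence
$$\dim {\rm span}(N'(S)) = \dim {\rm span}(N(S)) + d \geq \dim {\rm span}(S).$$
For $S = \{0\}$ or $S$ empty the inequality is trivial. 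By Corollary \ref{imp}, finiteness of $R$ is not needed, so $R'$ has a saturated matching $(v_1, w'_1), \ldots, (v_n, w'_n)$.

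\textbf{Step 3: discard the artificial pairs.} The $w'_k$ are linearly independent. At most $d$ of them lie in the span of the $0 \oplus f_j$. In fact, a pair coming from the added set has $w'_k \in \{0 \oplus f_j\}$, and at most $d$ of these can be independent. So at least $n-d$ of the pairs come from $R$, say $(v_k, w_k \oplus 0)$ with $(v_k, w_k) \in R$. These $v_k$ are linearly independent, being a subset of a basis, and the $w_k$ are linearly independent because the $w_k \oplus 0$ are. Taking $n-d$ of them gives the conclusion.

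The only delicate point is Step 2. One must make sure that whenever $S$ contributes positive dimension, the neighborhood picks up all $d$ new vectors. This is why each new target is attached to every $e_i$ rather than to a single vector: then any $S$ not orthogonal to all of $\mathbb{F}^n$ hits some $e_i$. The rest is bookkeeping, including the padding needed to ensure $m' \geq n$.
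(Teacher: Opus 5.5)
Your proof is correct and is the same argument as the paper's: adjoin $d$ new target directions $f_1,\ldots,f_d$ linked to every $e_i$, verify the hypothesis of Theorem~\ref{qhmt}/Corollary~\ref{imp} for the enlarged relation, and discard the at most $d$ artificial pairs from the resulting saturated matching. Your padding step is never actually triggered, because taking $S$ to be a basis of $\mathbb{F}^n$ in the hypothesis already forces $m \geq n-d$.
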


\begin{proof} 
Replace $\mathbb{F}^m$ with $\mathbb{F}^{m+d}$ and add to $R$ all pairs $(e_i, f_j)$ with $1 \leq i \leq n$ and $m+1 \leq j \leq m + d$, where $(e_i)$ and $(f_j)$ are the standard bases of $\mathbb{F}^n$ and $\mathbb{F}^{m+d}$. This restores the inequality ${\rm dim}\, {\rm span}(N(S)) \geq {\rm dim}\, {\rm span}(S)$, so that Theorem \ref{qhmt}/Corollary \ref{imp} can be applied, yielding a saturated matching in the sense of that result. That saturated matching will include at most $d$ pairs of the form $(e_i,f_j)$ with $m+1 \leq j \leq m + d$, which can then be discarded.
\end{proof}

The classical theorem of Kőnig \cite{konig} relates partial matchings of bipartite graphs to vertex covers. The relevant definitions for us go as follows. Let $R \subseteq \mathbb{F}^n \times \mathbb{F}^m$. A {\it cover} for $R$ is a pair $(E,F)$ of linear subspaces of $\mathbb{F}^n$ and $\mathbb{F}^m$ for which every pair $(v,w)$ in $R$ satisfies either $v \in E$ or $w \in F$ (or both). Its {\it size} is the value ${\rm dim}(E) + {\rm dim}(F)$. A {\it matching} in $R$ is a finite subset $\{(v_1, w_1), \ldots, (v_s, w_s)\}$ of $R$ such that the vectors $v_1$, $\ldots$, $v_s$ are linearly independent in $\mathbb{F}^n$ and the vectors $w_1$, $\ldots$, $w_s$ are linearly independent in $\mathbb{F}^m$. Its {\it size} is the number $s$.

\begin{thm}\label{qkt} (Linear Kőnig's theorem)
Let $m,n \geq 1$ and let $R \subseteq \mathbb{F}^n \times \mathbb{F}^m$. Then the maximum size of a matching in $R$ equals the minimum size of a cover for $R$.
\end{thm}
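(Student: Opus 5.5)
We prove the two inequalities separately. The inequality ``max matching $\leq$ min cover'' is elementary. The reverse inequality comes from Corollary \ref{dqhmt} applied with the deficiency $d = \max_S \bigl({\rm dim}\,{\rm span}(S) - {\rm dim}\,{\rm span}(N(S))\bigr)$, followed by the construction of a cover of size $n-d$ from a subset $S$ attaining that maximum.

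\emph{Weak duality.} Let $\{(v_1,w_1),\ldots,(v_s,w_s)\}$ be a matching and $(E,F)$ a cover. Let $I = \{i : v_i \in E\}$. Each remaining index $i \notin I$ has $w_i \in F$. The $v_i$ with $i \in I$ are linearly independent vectors in $E$, so $|I| \leq {\rm dim}(E)$. Likewise the $w_i$ with $i \notin I$ are linearly independent vectors in $F$, so $s - |I| \leq {\rm dim}(F)$. Adding gives $s \leq {\rm dim}(E) + {\rm dim}(F)$.

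\emph{Strong duality.} Since ${\rm span}(N(S)) = \mathcal{V}_R[{\rm span}(S)]$ depends only on $E_0 = {\rm span}(S)$, we may range over subspaces $E_0$ of $\mathbb{F}^n$. Set
$$d = \max_{E_0}\bigl({\rm dim}(E_0) - {\rm dim}(\mathcal{V}_R[E_0])\bigr).$$
Taking $E_0 = \{0\}$ shows $d \geq 0$. By Corollary \ref{dqhmt} there is a matching of size $n-d$.

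It remains to build a cover of size $n-d$. Fix $E_0$ attaining the maximum and put
$$E = E_0^\perp, \qquad F = \mathcal{V}_R[E_0] = {\rm span}(N(E_0)).$$
Then ${\rm dim}(E) + {\rm dim}(F) = (n - {\rm dim}(E_0)) + ({\rm dim}(E_0) - d) = n - d$.

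To see that $(E,F)$ is a cover, take $(v,w) \in R$ with $v \notin E$. Then $v \not\perp E_0$, so some $u \in E_0$ has $u \not\perp v$. By the definition of $N(u)$, this gives $w \in N(u) \subseteq F$. Hence every pair of $R$ has $v \in E$ or $w \in F$.

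So the maximum matching size is at least $n-d$ and the minimum cover size is at most $n-d$. Combined with weak duality, both equal $n-d$.

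\emph{Points to check.} Corollary \ref{dqhmt} is stated for finite subsets $S$, while the maximum is over subspaces. This is harmless: every subspace is the span of a finite basis $S$, and $N(S)$ has the same span as $\mathcal{V}_R[E_0]$. Infinite $R$ is already covered by Corollary \ref{imp}. The main obstacle is therefore only bookkeeping: confirming that the deficiency $d$ used in Corollary \ref{dqhmt} is exactly the one attained by $E_0$, which holds by the choice of $E_0$ as a maximizer.
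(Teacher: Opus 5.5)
Your proof is correct and follows the paper's argument: the same weak-duality count, the same key cover $({\rm span}(S)^\perp, {\rm span}(N(S)))$, and the same appeal to Corollary~\ref{dqhmt}. The only difference is the order of the bookkeeping. The paper takes $d$ from a minimum cover and derives the deficiency bound by contradiction, while you take $d$ to be the maximal deficiency and build a cover of size $n-d$ from a maximizer, which also identifies the common value explicitly as $n-d$ (as in Corollary~\ref{locor}).
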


\begin{proof}
It is easy to see that the size of any matching is less than or equal to the size of any cover. For supppose $\{(v_1, w_1), \ldots, (v_s, w_s)\}$ is a matching and $(E,F)$ is a cover. Let $I_1 = \{1 \leq i \leq s: v_i \in E\}$ and $I_2 = \{1 \leq i \leq s: v_i \not\in E\}$. Then we must have $w_i \in F$ for every $i \in I_2$, so $s = |I_1| + |I_2| \leq {\rm dim}(E) + {\rm dim}(F)$.

Conversely, let $(E,F)$ be a cover of minimum size and let $d = n - ({\rm dim}(E) + {\rm dim}(F))$, so that $n - d$ is the minimum size of a cover. I claim that ${\rm dim}\, {\rm span}(N(S)) \geq {\rm dim}\, {\rm span}(S) - d$ for every finite subset $S \subset \mathbb{F}^n$. If this were to fail for some $S$, then we could let $E' = {\rm span}(S)^\perp$ and $F' = {\rm span}(N(S))$. Then ${\rm dim}(E') = n - {\rm dim}\, {\rm span}(S)$ and, by assumption, ${\rm dim}(F') < {\rm dim}\, {\rm span}(S) - d$, so that
${\rm dim}(E') + {\rm dim}(F') < n-d$. But $(E',F')$ is easily seen to be a cover for $R$, contradicting minimality of $(E,F)$.

We can therefore apply Corollary \ref{dqhmt} and obtain a matching of size $n - d$, which is the minimum size of a cover.
\end{proof}

Conversely, Theorem \ref{qhmt} can be inferred from Theorem \ref{qkt}. Assume Theorem \ref{qkt} is known and let $R \subseteq \mathbb{F}^n \times \mathbb{F}^m$. Suppose there is no saturated matching. Then the maximum size of a matching in $R$ is at most $n - 1$, so according to Theorem \ref{qkt} there exists a cover $(E,F)$ with ${\rm dim}(E) + {\rm dim}(F) \leq n - 1$. Letting $S$ be a basis for $E^\perp \subseteq \mathbb{F}^n$, we then have ${\rm dim}\, {\rm span}(S) = n - {\rm dim}(E)$ and ${\rm dim}\, {\rm span}(N(S)) \leq {\rm dim}(F)$ since $N(S) \subseteq F$ by virtue of the fact that $(E,F)$ is a cover. But by the above ${\rm dim}(F) \leq n - 1 - {\rm dim}(E) = {\rm dim}\, {\rm span}(S) - 1$, so we have proven the contrapositive of ``${\rm dim}\, {\rm span}(N(S)) \geq {\rm dim}\, {\rm span}(S)$ for every finite $S$ implies there is a saturated matching''.

Thus, the linear versions of the theorems of Hall and Kőnig are related in the same way as their classical counterparts.

The classical theorem of Kőnig easily follows from Theorem \ref{qkt}. A {\it cover} for a relation $r \subseteq \{1, \ldots, n\} \times \{1, \ldots, m\}$ is a pair $(S, T)$ of subsets of $\{1, \ldots, n\}$ and $\{1, \ldots, m\}$ for which every pair $(i,j) \in r$ satisfies either $i \in S$ or $j \in T$ (or both). Its {\it size} is the value $|S| + |T|$. A {\it matching} in $r$ is a set of pairs $(i_1, j_1), \ldots, (i_s, j_s) \in r$ such that the $i$'s are distinct and the $j$'s are distinct. Its {\it size} is the number $s$.

\begin{cor} (Kőnig's theorem)
Let $m, n \geq 1$ and let $r \subseteq \{1, \ldots, n\}\times \{1, \ldots, m\}$. Then the maximum size of a matching in $r$ equals the minimum size of a cover for $r$.
\end{cor}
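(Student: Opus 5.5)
The plan is to deduce the classical Kőnig theorem from Theorem \ref{qkt}, in the same way Corollary \ref{qhmtc} was deduced from Theorem \ref{qhmt}. The inequality ``maximum matching $\leq$ minimum cover'' is trivial: each pair of a matching has either its first index in $S$ or its second in $T$, and the indices are distinct. So it suffices to produce a combinatorial matching and a combinatorial cover of equal size.

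Let $R \subseteq \mathbb{F}^n\times\mathbb{F}^m$ be the set of pairs $(e_i,f_j)$ with $(i,j)\in r$, where $(e_i)$ and $(f_j)$ are the standard bases. By Theorem \ref{qkt} there is a matching $\{(v_1,w_1),\ldots,(v_s,w_s)\}$ in $R$ and a cover $(E,F)$ for $R$ with $s = {\rm dim}(E)+{\rm dim}(F)$.

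The matching side is immediate. Each $v_k$ is some $e_{i_k}$ and each $w_k$ is some $f_{j_k}$ with $(i_k,j_k)\in r$. Linear independence of standard basis vectors is just distinctness, so the $i_k$'s are distinct, the $j_k$'s are distinct, and we have a matching in $r$ of size $s$.

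The cover side is the only step needing care, since $E$ and $F$ need not be coordinate subspaces. I would set $S=\{i: e_i\in E\}$ and $T=\{j: f_j\in F\}$. Then $(S,T)$ is a cover for $r$: if $(i,j)\in r$, then $(e_i,f_j)\in R$, so $e_i\in E$ or $f_j\in F$, i.e.\ $i\in S$ or $j\in T$. Moreover $\{e_i:i\in S\}$ is a linearly independent subset of $E$, so $|S|\leq{\rm dim}(E)$, and likewise $|T|\leq{\rm dim}(F)$. Hence $|S|+|T|\leq s$.

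Combining this with the trivial inequality, $s$ is both the maximum size of a matching and the minimum size of a cover for $r$. In particular the cover $(S,T)$ has size exactly $s$.
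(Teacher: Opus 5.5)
Your proof is correct and follows essentially the same route as the paper: you encode $r$ as the relation $R$ of standard basis pairs, apply Theorem \ref{qkt}, read off a matching in $r$ directly, and shrink an arbitrary cover $(E,F)$ for $R$ to the coordinate subspaces spanned by the basis vectors it contains. The only difference is organizational. You sandwich $s$ between the trivial combinatorial inequality and the cover you construct, whereas the paper asserts that matchings and covers correspond in both directions.
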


\begin{proof}
As in the proof of Corollary \ref{qhmtc}, let $R \subset \mathbb{F}^n \times \mathbb{F}^m$ be the set of pairs $(e_i, e_j)$ with $(i,j) \in r$. It is clear that matchings for $r$ correspond to matchings for $R$ in the sense of Theorem \ref{qkt}, and with equal sizes. It is also clear that any cover $(S,T)$ for $r$ generates a cover $(E,F)$ for $R$ with $E = {\rm span}\{e_i: i \in S\}$ and $F = {\rm span}\{e_i: i \in T\}$, again with equal sizes. There may be other covers for $R$, but a moment's thought shows that every cover for $R$ contains one of this form: since all the vectors appearing in $R$ belong to the standard basis $(e_i)$, we can reduce any cover $(E, F)$ to the spans of the standard basis vectors $E$ and $F$ contain, without affecting the covering property. We can now infer the desired conclusion from Theorem \ref{qkt}.
\end{proof}

In the last section I attributed Corollary \ref{qhmtl} to Lovász. Theorem 1* of \cite{lovasz} is actually a little stronger than that result in that it accommodates a possible defect in the dimension of $\mathcal{V}[E]$ versus the dimension of $E$. This can also be directly deduced from Corollary \ref{dqhmt}.

\begin{cor}\label{locor} \cite[Theorem 1*]{lovasz}
Let $m, n \geq 1$ and let $\mathcal{V}$ be a linear subspace of $M_{m,n}$. Suppose $\mathcal{V}$ is generated by rank one matrices and let $$d = {\rm max}\{{\rm dim}(E) - {\rm dim}(\mathcal{V}[E]): E\mbox{ is a linear subspace of }\mathbb{F}^n\}.$$ Then the maximum rank of a matrix in $\mathcal{V}$ is $n - d$.
\end{cor}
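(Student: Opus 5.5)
We prove the two inequalities separately: the maximum rank is at most $n-d$, and it is at least $n-d$. Note first that $d \geq 0$, since $E = \{0\}$ gives the value $0$.

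For the upper bound, let $A \in \mathcal{V}$ and let $E$ attain the maximum defining $d$. Since $A(E) \subseteq \mathcal{V}[E]$, we get ${\rm dim}(A(E)) \leq {\rm dim}(E) - d$. Hence the restriction of $A$ to $E$ has kernel of dimension at least $d$, so ${\rm dim}\,{\rm ker}(A) \geq d$ and ${\rm rank}(A) \leq n - d$.

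For the lower bound, write $\mathcal{V} = \mathcal{V}_R$ with $R = \{(v_i,w_i)\}$ finite and $w_iv_i^*$ rank one generators of $\mathcal{V}$. The observation preceding Proposition \ref{lafact} gives ${\rm span}(N(S)) = \mathcal{V}_R[{\rm span}(S)]$ for every finite $S \subseteq \mathbb{F}^n$. With $E = {\rm span}(S)$, the definition of $d$ then yields
$${\rm dim}\,{\rm span}(N(S)) \geq {\rm dim}\,{\rm span}(S) - d.$$
Corollary \ref{dqhmt} now produces pairs $(v_1',w_1'),\ldots,(v_{n-d}',w_{n-d}') \in R$ with linearly independent first coordinates and linearly independent second coordinates. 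By Proposition \ref{lafact}, the sum $\sum_i w_i'v_i'^*$ has rank $n-d$, and it lies in $\mathcal{V}$.

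There is no real obstacle. The only point to check is that Corollary \ref{dqhmt} needs no assumption $m \geq n$. Its proof enlarges $\mathbb{F}^m$ to $\mathbb{F}^{m+d}$ before invoking Theorem \ref{qhmt}. If $m+d < n$ that theorem would not formally apply, but then $n - d > m$ forces ${\rm dim}\,\mathcal{V}[\mathbb{F}^n] \leq m < n-d$, so $E = \mathbb{F}^n$ would exceed the maximum $d$, which is impossible. So the hypotheses are consistent, and the argument above is complete.
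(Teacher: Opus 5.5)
Your proof is correct and follows the paper's argument. The upper bound is the same rank--nullity estimate $\dim A(E)\ge \dim E-\dim\ker A$; you fix a maximizing $E$ where the paper fixes a maximal-rank $A$. The lower bound is exactly the paper's route through Corollary \ref{dqhmt} and Proposition \ref{lafact}. Your side remark is also a correct check that the paper leaves implicit: taking $E=\mathbb{F}^n$ shows $m+d\ge n$ automatically, so the padding step in Corollary \ref{dqhmt} does land in the $m\ge n$ setting required by Theorem \ref{qhmt}.
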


\begin{proof}
Let $r$ be the maximum rank of a matrix in $\mathcal{V}$ and let $A \in \mathcal{V}$ have this rank. Then ${\rm dim}\, {\rm ker}(A) = n - r$. Thus any linear subspace $E$ of $\mathbb{F}^n$ satisfies
\begin{eqnarray*}
{\rm dim}(\mathcal{V}[E]) &\geq& {\rm dim}(A(E))\\
&\geq& {\rm dim}(E) - {\rm dim}\, {\rm ker}(A)\\
&=& {\rm dim}(E) - (n - r),
\end{eqnarray*}
so that $n - r \geq d$, or equivalently, $r \leq n - d$. Conversely, say $\mathcal{V} = \mathcal{V}_R$ for some $R \subseteq \mathbb{F}^n\times\mathbb{F}^m$. For any finite subset $S$ of $\mathbb{F}^n$, if $E = {\rm span}(S)$ then $${\rm dim}\, {\rm span}(N(S)) = {\rm dim}(\mathcal{V}[E]) \geq {\rm dim}(E) - d = {\rm dim}\, {\rm span}(S) - d,$$ so according to Corollary \ref{dqhmt} there is a matching in $R$ of size $n - d$; and it then follows from Proposition \ref{lafact} that $\mathcal{V}$ contains a matrix of rank $n - d$. So we also have the reverse inequality $r \geq n - d$.
\end{proof}

Alternatively, one can deduce the defect form of Lovász's result (Corollary \ref{locor}) from its full rank version (Corollary \ref{qhmtl}) in the same way that we deduced the defect form of the linear marriage theorem (Corollary \ref{dqhmt}) from the linear marriage theorem (Theorem \ref{qhmt}), i.e., by adding $d$ extra ``dummy'' dimensions in the range in order to restore the full rank condition.

\subsection{Dilworth's theorem}

The theorems of Hall and Kőnig belong to a circle of ideas in combinatorics that also includes results such as theorems of Dilworth and Menger, to which they may be said to be ``equivalent'' in the sense that each of these results is more or less easily deduced from the others. Depending on how loosely one interprets ``easily deduced'', this circle of ideas could be expanded further \cite{cameron}. 

Dilworth's theorem states that the minimum number of chains needed in a partition of a finite poset into chains equals the maximum cardinality of an antichain in the poset. This is a little more distant from the theorems of Hall and Kőnig then they are to each other, because it deals with partially ordered sets rather than bipartite graphs. So before we can formulate linear and matrix analogs of it, we first need to identify the appropriate linear and matrix versions of a partially ordered set. The latter was done in \cite{weaver}: in finite dimensions, the matrix version of a strict partial order (``strict'' meaning that we axiomatize $\succ$, not $\succeq$) is a nilpotent operator algebra, i.e., a linear subspace $\mathcal{V}$ of $M_n$ ($= M_{n,n}(\mathbb{F})$) which satisfies $\mathcal{V}^2 \subseteq \mathcal{V}$ and $\mathcal{V}^n = \{0\}$.

The linear version of this definition goes as follows. Let a {\it linorder} be a relation $R \subset \mathbb{F}^n \times \mathbb{F}^n$ with the properties (1) if $(v,w)$ and $(\tilde{v}, \tilde{w})$ both belong to $R$, and $w \not\perp \tilde{v}$, then $(v, \tilde{w})$ belongs to $R$; (2) $(v,w) \in R$ implies $v \perp w$. The first condition is a version of transitivity, and the second is a version of antisymmetry in the form $a \succ b \to a \neq b$. It is routine to check that ${\rm span}\{wv^*: (v,w) \in R\}$ is a nilpotent operator algebra, the key point being that $\tilde{w}\tilde{v}^*\cdot wv^*$ is zero if $\langle w, \tilde{v}\rangle = \tilde{v}^*w = 0$ and a nonzero multiple of $\tilde{w}v^*$ if $\langle w, \tilde{v}\rangle  = \tilde{v}^*w \neq 0$. Conversely, every rank one generated nilpotent operator algebra is the linear span of a finite relation $R$ with the above properties.

A {\it bi-chain} for a linorder is a sequence $(w_1, v_1, \ldots, w_r, v_r)$ such that $w_i \not\perp v_i$ for $1 \leq i \leq r$ and $(v_i, w_{i+1}) \in R$ for $1 \leq i < r$. Its {\it length} is the value $r$. A {\it decomposition of $\mathbb{F}^n$ into bi-chains} is a family of bi-chains whose $v$'s collectively form a basis and whose $w$'s collectively form a basis. Its {\it size} is the number of bi-chains in the family. An {\it antichain} for $R$ is a linear subspace $C$ of $\mathbb{F}^n$ for which every $(v,w) \in R$ satisfies $v \perp C$ or $w \perp C$ (or both). Equivalently, $P\mathcal{V}_R P = \{0\}$ where $P$ is the orthogonal projection onto $C$.

Recall from the last section that a cover for $R$ is a pair of linear subspaces $E$ and $F$ of $\mathbb{F}^n$ such that every $(v,w) \in R$ satisfies either $v \in E$ or $w \in F$ (or both), and its size is the value ${\rm dim}(E) + {\rm dim}(F)$.

\begin{lem}\label{dillem}
Let $n \geq 1$ and let $R \subset \mathbb{F}^n\times\mathbb{F}^n$ be a linorder. Then the maximum dimension of an antichain for $R$ equals $n$ minus the minimum size of a cover for $R$.
\end{lem}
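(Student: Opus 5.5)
The plan is to prove the two inequalities separately. In both directions we translate between an antichain $C$ and the cover $(C^\perp, C^\perp)$, and between a cover $(E,F)$ and an antichain of dimension $n - \dim E - \dim F$. We shall see that antisymmetry and transitivity of the linorder play no role; only the definitions of antichain and cover are used.

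\emph{Antichain to cover.} Let $C$ be an antichain and put $E = F = C^\perp$. For $(v,w) \in R$ we have $v \perp C$ or $w \perp C$, that is, $v \in E$ or $w \in F$. So $(E,F)$ is a cover, and its size is $2(n - \dim C)$. This gives only $\min(\text{cover}) \leq 2n - 2\dim C$, which is too weak. A better choice is needed.

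\emph{Cover to antichain.} Given a cover $(E,F)$, set $C = E^\perp \cap F^\perp$. This is the most natural candidate antichain. Take $(v,w) \in R$. If $v \in E$ then $v \perp C$, and if $w \in F$ then $w \perp C$. Hence $C$ is an antichain, and
$$\dim C \geq n - \dim E - \dim F.$$
Therefore $\max(\text{antichain}) \geq n - \min(\text{cover})$.

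\emph{Reverse inequality.} It remains to show $\dim C \leq n - \min(\text{cover})$ for every antichain $C$. That is, from an antichain of dimension $k$ we must build a cover of size at most $n - k$. This is the main obstacle. A cover of size $n - k$ must split the ``non-$C$'' part of the space between $E$ and $F$ rather than doubling it. So we look for subspaces with $\dim E + \dim F = n - k$, $v \in E$ or $w \in F$ for each $(v,w) \in R$, and $E, F \subseteq C^\perp$.

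For this direction I would first check small examples. Take $n = 2$ and $R = \emptyset$. Then the antichain is $\mathbb{F}^2$ and the minimal cover is $(0,0)$, so $2 = 2 - 0$. Now take $R = \{(e_1, e_2)\}$, the single relation $e_1 \succ e_2$, which is a linorder. An antichain must satisfy $e_1 \perp C$ or $e_2 \perp C$, so the maximal antichains are lines and the maximum dimension is $1$. A minimal cover is $(\mathrm{span}\, e_1, 0)$, of size $1$, and $2 - 1 = 1$, which is consistent.

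In general I would try the following construction. Consider the nilpotent algebra $\mathcal{V} = \mathcal{V}_R$ and the ``upper'' subspace $F = \mathcal{V}[C]$. This subspace is $\mathcal{V}$-invariant because $\mathcal{V}^2 \subseteq \mathcal{V}$, and it is orthogonal to $C$ because $P\mathcal{V}P = 0$. Then define $E$ as an appropriate complement inside $C^\perp$. The rank-one generators whose $v$ is not orthogonal to $C$ have $w \in F$. The remaining generators have $v \in C^\perp$, and we want to arrange that such $v$ lie in $E$ or their $w$ lie in $F$. Here transitivity (1) is what should let a chain through $C$ be pushed into $F$. The dimension count $\dim E + \dim F \leq n - k$ is the step I expect to be hardest, and I would attack it via induction on $n$ using nilpotency (a nonzero common kernel vector for $\mathcal{V}$).
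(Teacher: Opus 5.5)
\textbf{Verdict: genuine gap.} Your cover-to-antichain direction is correct and is exactly the paper's argument: from a cover $(E,F)$ you take $C=E^\perp\cap F^\perp$ and get $\dim C\ge n-\dim E-\dim F$. The reverse inequality, however, is not proved. You pick the right $F=\mathcal{V}_R[C]=\mathrm{span}(N(C))$, the same as the paper's. You also record the two facts that finish the job, namely $F\perp C$ and $\mathcal{V}_R[F]\subseteq F$. But you never say what $E$ is, and you misjudge where the difficulty lies.

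The missing step is the choice $E=(C+F)^\perp$, the \emph{orthogonal} complement of $F$ inside $C^\perp$. With this choice, $F\perp C$ gives $\dim E+\dim F=n-\dim C$ at once. So the dimension count is trivial, and no induction on $n$ or nilpotency is needed.

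The real content is the cover property, and it needs the complement to be orthogonal. Since $v\in E$ if and only if $v\perp C+F$, a pair $(v,w)\in R$ with $v\notin E$ falls into one of two cases:
\begin{itemize}
\item $v\not\perp C$: then $w\in N(C)\subseteq F$.
\item $v\not\perp \tilde w$ for some $(\tilde v,\tilde w)\in R$ with $\tilde v\not\perp C$: then condition (1) gives $(\tilde v,w)\in R$, so again $w\in F$.
\end{itemize}
Equivalently, $(E,F)$ is a cover if and only if $\mathcal{V}_R[E^\perp]\subseteq F$, and $\mathcal{V}_R[C+F]=\mathcal{V}_R[C]+\mathcal{V}_R[F]\subseteq F$ by the invariance you noted. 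With a non-orthogonal complement, $v\notin E$ would not translate into a non-orthogonality relation, and condition (1) could not be applied.

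Your opening claim that transitivity plays no role is false. It also contradicts your own later use of $\mathcal{V}_R^2\subseteq\mathcal{V}_R$, which is precisely condition (1). Here is a counterexample when (1) is dropped. In $\mathbb{F}^3$ take $R=\{(e_1,e_2),(e_2,e_3)\}$, which satisfies (2) but not (1).
\begin{itemize}
\item $\mathrm{span}(e_1,e_3)$ is an antichain of dimension $2$.
\item No cover has size $1$. A one-dimensional $E$ with $F=0$ would have to contain both $e_1$ and $e_2$. A one-dimensional $F$ with $E=0$ would have to contain both $e_2$ and $e_3$.
\end{itemize}
So $n$ minus the minimum cover size is $1$, not $2$. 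In the paper's proof only antisymmetry (2) goes unused; transitivity is exactly what makes the antichain-to-cover direction work.
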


\begin{proof}
Let $(E,F)$ be a cover for $R$ of minimum size. Then $C = E^\perp \cap F^\perp = (E + F)^\perp$ is an antichain for $R$, and the size of the cover is $${\rm dim}(E) + {\rm dim}(F) = {\rm dim}(E + F) + {\rm dim}(E \cap F) = n - {\rm dim}(C) + {\rm dim}(E \cap F).$$ So ${\rm dim}(C) \geq n - ({\rm dim}(E) + {\rm dim}(F))$. This shows that the maximum dimension of an antichain is at least $n$ minus the minimum size of a cover.

Conversely, let $C$ be an antichain for $R$ of maximum dimension; we must find a cover whose size is exactly $n - {\rm dim}(C)$. The desired cover is $F = {\rm span}(N(C))$ and $E = (C + F)^\perp$. First, we evaluate its size. Since $(v,w) \in R$ contributes to $N(C)$ only if $v \not\perp C$, the antichain property requires $w \perp C$ for every such pair. This shows that $N(C) \perp C$, and hence ${\rm dim}(C + F) = {\rm dim}(C) + {\rm dim}(F)$ and therefore $${\rm dim}(E) + {\rm dim}(F) = (n - {\rm dim}(C) - {\rm dim}(F)) + {\rm dim}(F) = n - {\rm dim}(C).$$ So $(E,F)$ has the correct size, and we must now show that it is a cover.

Let $(v,w) \in R$. Assume $v \not\in E$; we must prove $w \in F$. Since $v \not\in E$, either $v \not\perp C$ or $v \not\perp N(C)$. In the first case, we immediately get $w \in N(C) \subseteq F$. In the second case, there must exist $(\tilde{v},\tilde{w}) \in R$ such that $\tilde{v} \not\perp C$ and $\tilde{w} \not\perp v$. Then condition (1) in the definition of a linorder yields $(\tilde{v},w) \in R$, which again entails $w \in N(C) \subseteq F$. We conclude that $(E,F)$ is a cover, as desired.
\end{proof}

\begin{thm}\label{qdt} (Linear Dilworth's theorem)
Let $n \geq 1$ and let $R \subset \mathbb{F}^n \times \mathbb{F}^n$ be a linorder. Then the minimum size of a decomposition into bi-chains equals the maximum dimension of an antichain.
\end{thm}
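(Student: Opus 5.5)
The plan is to mimic the classical derivation of Dilworth's theorem from Kőnig's theorem. By Lemma \ref{dillem} and Theorem \ref{qkt}, the maximum dimension of an antichain equals $n$ minus the maximum size of a matching in $R$. So it suffices to show two things. First, every decomposition into bi-chains of size $k$ yields a matching of size $n-k$. Second, every matching of size $s$ yields a decomposition into bi-chains of size $n-s$. Then the minimum decomposition size is $n$ minus the maximum matching size, which is the maximum antichain dimension.

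\emph{Decomposition to matching.} Take a decomposition into bi-chains $(w^{(j)}_1, v^{(j)}_1, \ldots, w^{(j)}_{r_j}, v^{(j)}_{r_j})$, $1 \leq j \leq k$. The $v$'s form a basis, so $\sum_j r_j = n$. The pairs $(v^{(j)}_i, w^{(j)}_{i+1})$ with $i < r_j$ belong to $R$, and there are $n - k$ of them. Their first coordinates are a subset of the basis of $v$'s and their second coordinates are a subset of the basis of $w$'s. Hence both lists are linearly independent, and these pairs form a matching of size $n-k$.

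\emph{Matching to decomposition.} This is the main step. Given a matching $(v_1,w_1),\ldots,(v_s,w_s)$, we must assemble bi-chains. The pairs $(v_i, w_{i'})$ play the role of the edges ``$x$ covers $y$''. The missing ingredient is the links $w \not\perp v$ inside each bi-chain, which amount to a choice of bijection between $w$-vectors and $v$-vectors.

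First extend $\{v_i\}$ and $\{w_i\}$ to bases $v_1,\ldots,v_n$ and $w_1,\ldots,w_n$. We need to choose the extensions, and a pairing, so that the resulting graph decomposes into exactly $n-s$ paths. Form the directed graph whose vertices are indices, with an edge $i \to \sigma(i)$ whenever $(v_i, w_{\sigma(i)})$ is a matching pair. Then declare a ``link'' from $w_j$ to $v_j$, which requires $\langle w_j, v_j\rangle \neq 0$.

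The cleanest route is to rephrase everything in terms of the matrix $N = \sum_{i \leq s} w_i v_i^*$, which lies in $\mathcal{V}_R$, has rank $s$ by Proposition \ref{lafact}, and is nilpotent. Nilpotency holds because $\mathcal{V}_R$ is a nilpotent algebra, or directly from the antisymmetry condition together with transitivity. Take a Jordan basis for $N$, consisting of chains $u, Nu, \ldots, N^{r-1}u$. The number of Jordan blocks is $\dim\ker N = n - s$. For each block, the vectors $N^k u$ with $k \geq 1$ are images under $N$, and we want to take the $w$'s and $v$'s to be suitable dual vectors.

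Concretely, write $N = \sum_j N e_j e_j^{*}$-type expansions in the Jordan basis. With the Jordan basis $\{b_\ell\}$ and its dual basis $\{b_\ell^\#\}$, we have $N = \sum b_{\ell+1}(b_\ell^\#)^*$ over the non-terminal positions of each chain. We set $v$'s $= b^\#$'s and $w$'s $= b$'s. Then $\langle b_\ell, b_\ell^\#\rangle = 1 \neq 0$, so the links hold, and each chain gives a bi-chain $(b_1, b_1^\#, b_2, b_2^\#, \ldots)$ with $n-s$ bi-chains in total.

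The obstacle is that we need $(b_\ell^\#, b_{\ell+1}) \in R$, not merely $b_{\ell+1}(b_\ell^\#)^* \in \mathcal{V}_R$. I would handle this as follows. Since $N$ has rank $s$ and is a sum of $s$ rank-one terms $w_iv_i^*$, we have $\mathrm{span}(v_i) = (\ker N)^\perp$ and $\mathrm{span}(w_i) = \mathrm{im}\, N$. Any other rank-$s$ factorization $N = \sum_{i\le s} w'_i v'^*_i$ is obtained from the original by an invertible change $w' = wG$, $v' = v G^{-*}$. I would then check that linorder membership is preserved under such recombinations, or else replace $R$ by its closure $\{(v,w): wv^* \in \mathcal{V}_R\}$. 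The latter seems more promising. I would verify that this closure is still a linorder with the same antichains and covers, using the transitivity computation $\tilde w\tilde v^* wv^* = \langle w,\tilde v\rangle \tilde w v^*$. If that verification fails, the fallback is an induction on $n$ imitating the Halmos–Vaughan style argument, splitting off one bi-chain at a time.
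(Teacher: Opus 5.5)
Your reduction to matchings via Lemma~\ref{dillem} and Theorem~\ref{qkt} is correct, and so is your decomposition-to-matching direction (a tidy repackaging of the paper's cover-counting argument). The gap is in the matching-to-decomposition step, which is the substance of the theorem.

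A bi-chain needs each step $(v_i,w_{i+1})$ to be an element of $R$ itself. A Jordan basis of $N$ does not deliver this, and in fact the summands $b_{\ell+1}(b_\ell^\#)^*$ need not even lie in $\mathcal{V}_R$. For instance, $R=\{(e_1,e_3),(e_2,e_4)\}$ is a linorder in $\mathbb{F}^4$ with $N=e_3e_1^*+e_4e_2^*$. The chains $e_1\pm e_2\mapsto e_3\pm e_4$ form a legitimate Jordan basis, yet $(e_3+e_4)(e_1+e_2)^*\notin\mathcal{V}_R$. So membership in $R$ is not preserved under the recombinations $w'=wG$, $v'=vG^{-*}$. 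Passing to the closure $\{(v,w): wv^*\in\mathcal{V}_R\}$ does not help either: the Jordan summands still need not belong to it. In any case, a decomposition using pairs from a larger relation is not a decomposition into bi-chains for $R$. This is exactly the obstruction the paper flags after Theorem~\ref{cqdt}: coherent (Jordan) decompositions do not readily convert back into bi-chain decompositions. Your Halmos--Vaughan fallback is not specified, so as written the upper bound is unproved.

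The missing idea is that a bi-chain needs only non-orthogonality $w_i\not\perp v_i$, not duality. Your abandoned graph construction already works, with no careful choices needed:
\begin{enumerate}
\item Keep the matching pairs $(v_i,w_i)$, $i\le s$, as the $R$-steps, and extend both lists to bases arbitrarily.
\item Take any bijection $\phi$ with $w_i\not\perp v_{\phi(i)}$ for all $i$. One exists because the matrix $[\langle w_i,v_j\rangle]$ pairing two bases is invertible, so some term of its Leibniz expansion is nonzero. The paper gets $\phi$ from Hall's theorem instead.
\item Put edges $w_i\to v_{\phi(i)}$ for all $i$ and $v_i\to w_i$ for $i\le s$.
\item Acyclicity is automatic from the linorder axioms. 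Following $v_i\to w_i\to v_{\phi(i)}\to w_{\phi(i)}\to\cdots$, transitivity gives $(v_i,w_j)\in R$ for every reachable $w_j$. Antisymmetry then gives $v_i\perp w_j$, so $\phi(j)\neq i$ and the path never returns to $v_i$.
\item Every vertex has in- and out-degree at most one, and there are $2n$ vertices and $n+s$ edges. So the graph splits into $n-s$ paths, each starting at a $w$ and ending at a $v$, and each path is a bi-chain.
\end{enumerate}
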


\begin{proof}
Let $(E,F)$ be a cover of minimum size and suppose we are given a decomposition into bi-chains. Letting $a$ be the total number of $v$'s, over all the bi-chains in the decomposition, that belong to $E$, and letting $b$ be the total number of $w$'s that belong to $F$, we get that $a \leq {\rm dim}(E)$ and $b \leq {\rm dim}(F)$ (since the $v$'s are linearly independent, as are the $w$'s). And for each bi-chain $(w_1, v_1, \ldots, w_r, v_r)$ in the decomposition, we know that either $v_i \in E$ or $w_{i+1} \in F$, for each $1 \leq i < r$, which implies that the number of $v$'s in the bi-chain that belong to $E$ plus the number of $w$'s in the bi-chain that belong to $F$ is at least $r - 1$. The sum of the lengths (values of $r$) of the bi-chains in the decomposition equals $n$. Thus if $c$ is the total number of bi-chains in the decomposition, we get $${\rm dim}(E) + {\rm dim}(F) \geq a + b \geq n - c.$$ According to the lemma, it follows that $c$, the size of an arbitrary decomposition into bi-chains, is greater than or equal to the maximum dimension of an antichain.

Conversely, we can apply Theorem \ref{qkt} to $R$ and get that the maximum size of a matching equals the minimum size of a cover equals $n$ minus the maximum dimension of an antichain. Thus there is a matching $\{(v_1, w_1), \ldots, (v_s, w_s)\}$ with $s = n -$ (the maximum dimension of an antichain). Find vectors $v_{s+1}, \ldots, v_n$ and $w_{s+1}, \ldots, w_n$ such that $v_1, \ldots, v_n$ and $w_1, \ldots, w_n$ are bases of $\mathbb{F}^n$. Then let $\phi: \{1, \ldots, n\} \to \{1, \ldots, n\}$ be a bijection with the property that $w_i \not\perp v_{\phi(i)}$ for all $i$; this is possible by the classical marriage theorem, because any $k$ of the vectors $w_1, \ldots, w_n$ span a $k$-dimensional linear subspace and hence cannot all be orthogonal to more than $n-k$ of the linearly independent vectors $v_1, \ldots, v_n$ --- i.e., they must collectively be non-orthogonal to at least $k$ of the $v$'s.

We can now define a directed bipartite graph on the vertex set $\{v_1, \ldots, v_n$, $w_1, \ldots, w_n\}$ by including an edge from $w_i$ to $v_{\phi(i)}$ for each $1 \leq i \leq n$ and an edge from $v_i$ to $w_i$ for each $1 \leq i \leq s$. Observe that there is exactly one edge from each $w$ to some $v$, and at most one edge from each $v$ to some $w$. Moreover, it follows from the definition of a linorder that this graph has no cycles. For if there is an edge from $v_i$ to $w_i$, from $w_i$ to $v_{\phi(i)}$, and from $v_{\phi(i)}$ to $w_{\phi(i)}$, then $(v_i, w_{\phi(i)})$ belongs to $R$ by condition (1) in the definition of a linorder, and inductively $(v_i, w_j)$ belongs to $R$ for any $w_j$ further down the path. So $v_i$ is orthogonal to all $w_j$'s which can be reached from it by a directed path, by condition (2) in the definition of a linorder. Then from the definition of $\phi$ we cannot have $i = \phi(j)$ for any such $j$.

The graph on $\{v_1, \ldots, v_n, w_1, \ldots, w_n\}$ therefore decomposes into a set of maximal paths, each beginning with a $w$ vector (since every $v$ vector is pointed to by a $w$ vector) and ending with a $v$ vector (since every $w$ vector points to a further $v$ vector). Each of these paths is evidently a bi-chain, so that we have described a decomposition into bi-chains. As the graph has $2n$ vertices and $n + s$ edges, the number of bi-chains in this decomposition is $2n - (n + s) = n - s$. Thus we have found a decomposition into bi-chains of size $n - s =$ the maximum dimension of an antichain.
\end{proof}

We can also go in the reverse direction and deduce Theorem \ref{qkt} (linear Kőnig) from Theorem \ref{qdt} (linear Dilworth). Assume Theorem \ref{qdt} is known and let $R \subseteq \mathbb{F}^n \times \mathbb{F}^m$. Then define a new relation $R' \subset \mathbb{F}^{n + m} \times \mathbb{F}^{n + m}$ by letting $R' = \{(v \oplus 0,0 \oplus w): (v,w) \in R\}$. This is easily seen to be a linorder. Every bi-chain has length either $1$ or $2$, and the bi-chains of length $2$ in any decomposition for $R'$ determine a matching for $R$; conversely, every matching of size $s$ for $R$ can be converted into a decomposition into bi-chains for $R'$ of size $(n + m) - s$, with $s$ bi-chains of length $2$ and $(n + m) - 2s$ bi-chains of length 1. So if $s$ is the maximum size of a matching, then the minimum size of a decomposition into bi-chains is $(n + m) - s$, and by Theorem \ref{qdt} this is the maximum dimension of an antichain. Then by Lemma \ref{dillem} the minimum size of a cover is $n + m - ((n + m) - s) = s =$ the maximum size of a matching.

The classical theorem of Dilworth can be inferred from Theorem \ref{qdt} as well.

\begin{cor} (Dilworth's theorem)
Let $P$ be a finite poset. Then the minimum size of a decomposition of $P$ into chains equals the maximum cardinality of an antichain in $P$.
\end{cor}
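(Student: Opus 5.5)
Given the finite poset $P = \{p_1, \ldots, p_n\}$ with strict order $\succ$, I will encode it as a linorder on $\mathbb{F}^n$ and apply Theorem \ref{qdt}. Let $R = \{(e_i, e_j): p_i \succ p_j\}$. Condition (1) holds because $e_j \not\perp e_k$ forces $j = k$, and then $p_i \succ p_j \succ p_l$ gives $(e_i, e_l) \in R$ by transitivity. Condition (2) holds because $p_i \succ p_j$ implies $i \neq j$, so $e_i \perp e_j$. By Theorem \ref{qdt}, the minimum size of a decomposition into bi-chains equals the maximum dimension of an antichain for $R$. It then remains to match each side with its classical counterpart.

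\emph{Antichains.} If $A \subseteq P$ is an antichain, then $C = {\rm span}\{e_i : p_i \in A\}$ is an antichain for $R$ of dimension $|A|$. Indeed, for $(e_i, e_j) \in R$ we cannot have both $p_i, p_j \in A$. For the converse I will use Lemma \ref{dillem}, which reduces the question to covers. As in the proof of the classical Kőnig corollary, any cover $(E, F)$ may be shrunk to the spans of the standard basis vectors it contains. This gives a cover $(S, T)$ of the relation $r = \{(i,j) : p_i \succ p_j\}$ that is no larger. Hence the minimum cover size for $R$ equals the minimum combinatorial cover size for $r$. Now $s = \{1, \ldots, n\} \setminus (S \cup T)$ is a classical antichain with $|s| \geq n - |S| - |T|$. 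Therefore, by Lemma \ref{dillem}, the maximum dimension of a linear antichain is at most the maximum cardinality of a classical antichain. So the two maxima agree.

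\emph{Chains.} A chain decomposition of $P$ into chains $p_{i_1} \succ \cdots \succ p_{i_r}$ yields the bi-chain $(e_{i_1}, e_{i_1}, \ldots, e_{i_r}, e_{i_r})$, and together these form a decomposition into bi-chains of the same size. So the minimum over chain decompositions is at least the minimum over bi-chain decompositions. This direction also follows from the first half of the proof of Theorem \ref{qdt}, since any chain decomposition already is a bi-chain decomposition. For the reverse I will avoid decoding an arbitrary bi-chain decomposition, whose vectors need not be standard basis vectors. Instead I will rerun the constructive half of the proof of Theorem \ref{qdt}, using a combinatorial matching. Apply the classical Kőnig theorem to $r$ to obtain a matching of size $s = $ the minimum cover size $= n - $ the maximum antichain cardinality. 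Link each matched pair $i \to j$. Each element has at most one successor and at most one predecessor, and acyclicity follows from strictness. The linked paths are chains by transitivity, and there are $n - s$ of them. This gives a chain decomposition of the required size.

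The main obstacle is precisely this reverse translation, since a general bi-chain decomposition involves non-basis vectors and does not visibly produce chains. I would therefore bypass it through the matching, as above, rather than trying to read chains off an arbitrary decomposition. Alternatively, I could observe that in the proof of Theorem \ref{qdt} the matching may be chosen inside $R$'s basis pairs, completed by the remaining $e_i$, and paired using $\phi = {\rm id}$. With those choices the bi-chains constructed there are literally chains.
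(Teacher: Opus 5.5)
Your proof is correct, but both translation steps take a different route from the paper.

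\textbf{Antichains.} The paper argues directly. If $C$ is an antichain for $R$, then $\{p_i : e_i \not\perp C\}$ is an antichain in $P$, because two such indices cannot be $R$-related, and its cardinality is at least $\dim(C)$. You instead go through Lemma \ref{dillem} and the reduction of covers to coordinate covers. That is valid, just longer.

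\textbf{Chains.} The ``main obstacle'' you describe is not really there. In a bi-chain decomposition for this $R$, only the endpoints $w_1$ and $v_r$ of each bi-chain can fail to be standard basis vectors. Every interior pair $(v_i,w_{i+1})$ lies in $R$, so it is a pair $(e_a,e_b)$ with $p_a\succ p_b$. The paper makes these the edges $p_a\to p_b$. Linear independence makes the edges distinct, with in- and out-degree at most one. There are $\sum_j(r_j-1)=n-c$ of them, and the resulting acyclic graph splits $P$ into $c$ chains. So the paper shows the two minima coincide exactly, and classical Dilworth follows from Theorem \ref{qdt} alone.

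\textbf{What your route buys.} You get the upper bound from the classical K\H{o}nig theorem plus the textbook K\H{o}nig-to-Dilworth linking argument. Theorem \ref{qdt} then supplies only the lower bound, which is trivial classically anyway, since each chain meets an antichain at most once. This is not circular, because classical K\H{o}nig was derived earlier from Theorem \ref{qkt}. But it means you are really proving Dilworth from K\H{o}nig rather than exhibiting it as a specialization of the linear Dilworth theorem.

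\textbf{A small slip.} In your closing alternative, $(v_i,w_i)\in R$ forces $v_i\perp w_i$, so the bijection $\phi$ in the proof of Theorem \ref{qdt} cannot be the identity on indices. What you mean is the bijection sending $i$ to the index $k$ with $v_k=w_i$. With that correction, the constructed bi-chains are indeed literal chains.
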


\begin{proof}
Let the vertices of $P$ be $p_1, \ldots, p_n$ and define a relation $R$ on $\mathbb{F}^n$ consisting of all pairs of the form $(e_i, e_j)$ such that $p_i \succ p_j$, where $(e_i)$ is the standard basis of $\mathbb{F}^n$. It is easy to see that this is a linorder. We show that the minimum size of a decomposition of $\mathbb{F}^n$ into bi-chains is equal to the minimum size of a partition of $P$ into chains. First, every chain $p_{i_1} \succ \cdots \succ p_{i_r}$ in $P$ can be converted into a bi-chain $(e_{i_1}, e_{i_1}, \ldots, e_{i_r}, e_{i_r})$ of the same length for $R$. In this way any partition of $P$ into chains yields a decomposition of $\mathbb{F}^n$ into bi-chains that has the same size. Conversely, suppose we are given a decomposition of $\mathbb{F}^n$ into $s$ many bi-chains of lengths $r_1$, $\ldots$, $r_s$. Every pair $(v_i, w_{i+1})$ ($1 \leq i < r_j$) in one of these bi-chains belongs to $R$, and hence both $v_i$ and $w_{i+1}$ are standard basis vectors, say $v_i = e_a$ and $w_{i+1} = e_b$. Create a directed graph on the vertex set of $P$ by placing an edge from $p_a$ to $p_b$ for every such pair. Note that $p_a \succ p_b$ in the originally given partial order. This graph has $\sum (r_j - 1) = n - s$ edges. Since the $v$'s, over all the bi-chains in the decomposition, are linearly independent and hence distinct, as are the $w$'s, every vertex in the graph has at most one edge coming in and at most one edge going out, and since the directed edges all point down the poset there are no cycles in the graph. This graph therefore decomposes $P$ into $n - (n - s) = s$ chains, showing that any decomposition of $\mathbb{F}^n$ into bi-chains can be converted into a partition of $P$ into chains that has the same size. We conclude that the minimum size of a decomposition of $\mathbb{F}^n$ into bi-chains equals the minimum size of a partition of $P$ into chains.

Next, we show that the maximum cardinality of an antichain in $P$ equals the maximum dimension of an antichain for $R$. First, if $S$ is an antichain in $P$ then $C = {\rm span}\{e_i: i \in S\}$ is an antichain for $R$ with ${\rm dim}(C) = |S|$. So for every antichain in $P$ there is an antichain for $R$ of the same size. Conversely, any antichain $C$ for $R$ can be enlarged to an antichain $C' = {\rm span}\{e_i: e_i \not\perp C\}$, and then $\{p_i: e_i \in C'\}$ is an antichain in $P$ whose cardinality is greater than or equal to ${\rm dim}(C)$. So for every antichain for $R$ there is an antichain in $P$ that is at least as large. We conclude that the maximum cardinality of an antichain in $P$ equals the maximum dimension of an antichain for $R$.

The result now follows from Theorem \ref{qdt}.
\end{proof}

The intuition behind bi-chains is that we are following the path a vector takes as successive rank one matrices $wv^*$, with $(v,w) \in R$, are applied to it. Thus one starts with $w_1$, then applies $w_2v_1^*$ to it to get a nonzero multiple of $w_2$, then applies $w_3v_2^*$ to that result to get a nonzero multiple of $w_3$, etc. The multiples are always nonzero because $v_i^*w_i = \langle w_i, v_i\rangle \neq 0$ for all $i$.

However, the theorem would fail if we only considered ``$w$-chains'' $(w_1, \ldots, w_r)$. For example, consider the linorder $R = \{(e_1, e_2), (e_1, e_3), (e_1, e_4)\} \subset \mathbb{F}^4$ which corresponds to a four-element poset in which a single element lies above a three-element antichain. It has a three-dimensional antichain, namely ${\rm span}\{e_2, e_3, e_4\}$, and in accordance with Theorem \ref{qdt}, there is a decomposition into three bi-chains. But there are also two $w$-chains $(e_1, e_2)$ and $(e_1 + e_3, e_4)$ whose elements constitute a basis of $\mathbb{F}^4$. In other words, if we only require that the $w$ vectors form a basis, and not also the corresponding $v$ vectors, then there can be chain decompositions of size strictly smaller than the maximum dimension of an antichain. By reversing the order in this example we can create a similar situation where the elements of two $v$-chains constitute a basis but their corresponding $w$'s do not.

Nonetheless, the more intuitive idea of decomposing into chains, rather than bi-chains, can be rescued in the following way. Given any linear subspace $\mathcal{V} \subseteq M_n$, let a {\it coherent chain in $\mathbb{F}^n$ relative to $\mathcal{V}$} be a sequence of the form $(v, Av, \ldots, A^{k-1} v)$ for some $v \in \mathbb{F}^n$ and some $A \in \mathcal{V}$. It is ``coherent'' because a single matrix in $\mathcal{V}$ implements every step. Let a {\it coherent decomposition of $\mathbb{F}^n$ into chains relative to $\mathcal{V}$} be a collection of coherent chains which are all implemented by one single $A \in \mathcal{V}$, and whose elements collectively form a basis of $\mathbb{F}^n$. Its {\it size} is the number of coherent chains in the family.

\begin{thm}\label{cqdt} (Coherent linear Dilworth's theorem)
Let $n \geq 1$ and let $R \subset \mathbb{F}^n\times \mathbb{F}^n$ be a linorder. Then the minimum size of a coherent decomposition into chains relative to $\mathcal{V}_R$ equals the maximum dimension of an antichain for $R$.
\end{thm}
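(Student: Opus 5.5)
The plan is to pass through maximum rank. I will show that both quantities in the statement equal $n - s$, where $s$ is the maximum rank of a matrix in $\mathcal{V}_R$. The bridge to antichains is the chain of equalities
$$s = \text{max size of a matching in } R = \text{min size of a cover for } R = n - \text{max dim of an antichain}.$$

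For the first equality: if $\{(v_1,w_1),\ldots,(v_t,w_t)\}$ is a matching, then by Proposition \ref{lafact} the matrix $\sum w_iv_i^* \in \mathcal{V}_R$ has rank $t$. Conversely, if some $A \in \mathcal{V}_R$ has rank $t$, write $A$ as a linear combination of the generators $wv^*$ with $(v,w)\in R$. Proposition \ref{lafact2} then gives $t$ distinct generators whose sum has rank $t$, and Proposition \ref{lafact} turns these into a matching of size $t$. The second equality is Theorem \ref{qkt}, and the third is Lemma \ref{dillem}. (Alternatively, Corollary \ref{locor} gives $s$ directly in terms of the defect $d$.)

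Lower bound. Take a coherent decomposition into $c$ chains $(u_j, Au_j, \ldots, A^{k_j-1}u_j)$, all implemented by one $A \in \mathcal{V}_R$, whose elements form a basis. Every element that is not the last in its chain is sent by $A$ to the next element of its chain, which is another basis element. This defines an injection from the $n-c$ non-final basis elements into the basis. So ${\rm im}(A)$ contains $n-c$ linearly independent vectors, and ${\rm rank}(A) \geq n - c$. Combined with ${\rm rank}(A) \le s$, this gives $c \geq n - s$, which is the maximum dimension of an antichain.

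Upper bound. Choose $A \in \mathcal{V}_R$ of rank $s$. Since $\mathcal{V}_R$ is a nilpotent operator algebra, $A^n \in \mathcal{V}_R^n = \{0\}$, so $A$ is nilpotent. A nilpotent matrix has only the eigenvalue $0$, so it has a Jordan form over $\mathbb{F}$ even when $\mathbb{F} = \mathbb{R}$. Hence there are vectors $u_1,\ldots,u_q$ and integers $k_j \ge 1$ such that the vectors $A^iu_j$ with $0\le i<k_j$ form a basis and $A^{k_j}u_j = 0$. The number of Jordan blocks is $q = \dim\ker A = n - s$. The Jordan chains $(u_j, Au_j, \ldots, A^{k_j-1}u_j)$ are then coherent chains relative to $\mathcal{V}_R$, all implemented by the single matrix $A$. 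They give a coherent decomposition of size $n-s$, which equals the maximum dimension of an antichain.

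I expect the main obstacle to be the identification of the maximum rank in $\mathcal{V}_R$ with $n$ minus the maximum antichain dimension. This is handled by combining Propositions \ref{lafact} and \ref{lafact2}, Theorem \ref{qkt} and Lemma \ref{dillem} as above. The remaining steps are routine linear algebra: the injectivity count in the lower bound and the Jordan form in the upper bound.
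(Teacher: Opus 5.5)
Your proof is correct and follows the paper's route: you identify the maximum rank $s$ in $\mathcal{V}_R$ with $n$ minus the maximum antichain dimension via Propositions \ref{lafact} and \ref{lafact2}, Theorem \ref{qkt} and Lemma \ref{dillem}, then use the Jordan form of a maximal-rank $A$ for the upper bound and a rank count for the lower bound. Your lower-bound count, that the $n-c$ non-initial chain elements are independent vectors in ${\rm im}(A)$, is the same dimension count as the paper's observation that the initial elements span $\mathbb{F}^n/{\rm im}(B)$, just stated directly rather than through the quotient.
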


\begin{proof}
As we noted earlier, $\mathcal{V}_R$ is a nilpotent algebra. We know from Propositions \ref{lafact} and \ref{lafact2} that the maximum rank $s$ of a matrix in $\mathcal{V}_R$ equals the maximum size of a matching as defined in Section 1.2, which equals the minimum size of a cover by Theorem \ref{qkt}, and therefore also equals $n$ minus the maximum dimension of an antichain by Lemma \ref{dillem}. 

Let $A \in \mathcal{V}_R$ be a matrix with maximum rank and put it in Jordan normal form. Since every matrix in $\mathcal{V}_R$ is nilpotent, $A$ has no nonzero eigenvalues and hence every block of its Jordan form has $1$'s along the first superdiagonal and $0$'s everywhere else. Each block contributes one dimension to ${\rm ker}(A)$, so there are $n - s$ blocks in all. For each block there is a nonzero vector $v$ such that the vectors $v$, $Av$, $\ldots$, $A^{k-1}v$ are linearly independent and $A^k v = 0$, where $k$ is the size of the block. Collecting these sequences over all the blocks of $A$ then yields a coherent decomposition of $\mathbb{F}^n$ into $n - s$ many chains, where, again, $n - s$ is also the maximum dimension of an antichain. This shows that there is a coherent decomposition of size equal to the maximum dimension of an antichain.

Conversely, given any coherent decomposition of $\mathbb{F}^n$ into $c$ many chains, let $B \in \mathcal{V}_R$ be the implementing matrix and let $\pi: \mathbb{F}^n \to \mathbb{F}^n/{\rm im}(B)$ be the quotient map. When the map $\pi$ is applied to any chain $(v, Bv, \ldots, B^{k-1}v)$ in the decomposition, it annihilates every term except possibly the first. But since all the elements of all the chains constitute a basis of $\mathbb{F}^n$, their projections must span the quotient space $\mathbb{F}^n/{\rm im}(B)$, which shows that this quotient has dimension at most $c$. Thus $n - {\rm rank}(B) \leq c$, and since ${\rm rank}(B) \leq s$ this shows that $c \geq n - s$, i.e., there are at least $n - s$ many chains in the decomposition. So the minimum size of a coherent decomposition into chains must exactly equal $n$ minus the maximum rank of a matrix in $\mathcal{V}_R$, i.e., the maximum dimension of an antichain.
\end{proof}

We can go from bi-chain decompositions to coherent decompositions in the following way. Given a decomposition of $\mathbb{F}^n$ into bi-chains $(w_1, v_1, \ldots, w_r, v_r)$, every ``interior'' pair $(v_i, w_{i+1})$ belongs to $R$ and hence the sum $A$, over all interior pairs of all bi-chains in the decomposition, of the matrices $wv^*$ will belong to $\mathcal{V}_R$; moreover, the linear independence of the $v$'s and $w$'s assumed in the definition of a decomposition into bi-chains entails via Proposition \ref{lafact} that $A$ must have rank equal to the number of summands, which is $n - c$ where $c$ is the number of chains in the decomposition. That means that putting $A$ in Jordan form yields a coherent decomposition of the same size as the original decomposition into bi-chains. So we can go from bi-chain decompositions to coherent decompositions, but I do not see any easy way to go in the reverse direction.

\subsection{Menger's theorem}

Menger's theorem belongs to the same circle of ideas as the theorems of Hall, Kőnig, and Dilworth. In one version it states that the maximum number of vertex-disjoint paths linking two sets of vertices in a directed graph equals the minimum number of vertices that must be removed to disconnect those two sets.

A linearized formulation of the notion of severing all paths between two sets of vertices in a directed graph goes as follows. Given linear subspaces $E$ and $F$ of $\mathbb{F}^n$ and a relation $R \subseteq \mathbb{F}^n \times \mathbb{F}^n$, define an {\it $(E,F)$-separator} (relative to $R$) to be a pair of linear subspaces $\tilde{E}$, $\tilde{F}$ with the properties that (1) $E \subseteq \tilde{E}$ and $F \subseteq \tilde{F}$, (2) $\tilde{F}^\perp \subseteq \tilde{E}$, and (3) for every $(v,w) \in R$, either $v \in \tilde{F}$ or $w \in \tilde{E}$. Condition (2) ensures that $\mathbb{F}^n$ decomposes into an orthogonal sum $\tilde{F}^\perp \oplus (\tilde{E} \cap \tilde{F}) \oplus \tilde{E}^\perp$, and condition (3) ensures that we cannot directly jump from the first summand to the last --- that is, $\mathcal{V}_R[\tilde{F}^\perp] \subseteq \tilde{E} = \tilde{F}^\perp \oplus (\tilde{E} \cap \tilde{F})$. Let the {\it size} of the separator $(\tilde{E}, \tilde{F})$ be the dimension of $\tilde{E} \cap \tilde{F}$.

Condition (3) can also be interpreted as saying that $\tilde{F}$ and $\tilde{E}$ constitute a cover for $R$ in the sense of Section 1.2.

Menger's theorem does not straightforwardly generalize to the linear setting because nonorthogonality can allow multiple independent paths to squeeze through a one-dimensional bottleneck. To illustrate this point, let a {\it bi-path from $E$ to $F$} be a sequence $(w_1, v_2, \ldots, w_r, v_r)$ with (1) $w_1 \in E$ and $v_r \in F$; (2) $w_i \not\perp v_i$ for $1 \leq i \leq r$; and (3) $(v_i, w_{i+1}) \in R$ for $1 \leq i < r$. Say that a collection of bi-paths is {\it independent} if all the $v$'s appearing in them are linearly independent, as are all the $w$'s. A natural conjecture would be that the minimum size of an $(E,F)$-separator equals the maximum number of independent bi-paths from $E$ to $F$. But the following example shows that this can fail.

\begin{exmp}
Working in $\mathbb{F}^7$, let $E = {\rm span}(e_1, e_2)$ and $F = {\rm span}(e_6, e_7)$, and define $R = \{(e_1, e_3 + e_4), (e_2, e_3 - e_4), (e_4 + e_5, e_6), (e_4 - e_5, e_7)\}$. There is an $(E,F)$-separator with size $1$, namely $\tilde{E} = {\rm span}(e_1, e_2, e_3, e_4)$ and $\tilde{F} = {\rm span}(e_4, e_5, e_6, e_7)$. But the two bi-paths $(e_1, e_1, e_3 + e_4, e_4 + e_5, e_6, e_6)$ and $(e_2, e_2, e_3 - e_4, e_4 - e_5, e_7, e_7)$ from $E$ to $F$ are independent.
\end{exmp}

In order to avoid this kind of problem, it seems that we need to impose a form of ``coherence'' on the linking paths. We make the following definition.

\begin{defn}
Let $R \subseteq \mathbb{F}^n \times \mathbb{F}^n$ and let $E$ and $F$ be linear subspaces of $\mathbb{F}^n$. Let $\iota: E \to \mathbb{F}^n$ be the natural inclusion and let $\pi: \mathbb{F}^n \to F$ be the orthogonal projection. Then we define the {\it coherent path capacity between $E$ and $F$ relative to $R$}, denoted $CPC_R(E,F)$, to be the maximum value of $${\rm rank}\left[\begin{matrix}I - A& \iota\\ \pi&0\end{matrix}\right] - n$$ over all $A \in \mathcal{V}_R$. Here $\left[\begin{matrix}I - A& \iota\\ \pi&0\end{matrix}\right]$ represents a linear map from $\mathbb{F}^n \oplus E$ to $\mathbb{F}^n \oplus F$.
\end{defn}

This definition requires some justification. Note first that the maximum rank of $\left[\begin{matrix}I - A& \iota\\ \pi&0\end{matrix}\right]$ is achieved by all $A$ in a dense, relatively open subset of $\mathcal{V}_R$. It is both the maximum rank and the generic rank over $A \in \mathcal{V}_R$.

Similarly, $I - A$ is invertible for generic $A \in \mathcal{V}_R$. When this is the case we have $${\rm rank}\left[\begin{matrix}I - A& \iota\\ \pi&0\end{matrix}\right] = n + {\rm rank}(\pi\cdot(I - A)^{-1}\cdot \iota)$$ by the Guttman rank additivity formula, where $-\pi\cdot(I - A)^{-1}\cdot \iota$ is the Schur complement of $I - A$, so that $CPC_R(E, F)$ could also be defined as the maximum rank of $\pi\cdot (I - A)^{-1}\cdot \iota$ over all $A \in \mathcal{V}_R$ such that $I - A$ is invertible, or equivalently over all $A \in \mathcal{V}_R$ with $\|A\| < 1$ to ensure invertibility.

If $A$ is the adjacency matrix of a directed graph, then expanding $(I - A)^{-1}$ as a formal power series $I + A + A^2 + \cdots$ embodies all walks in the graph, and $\pi\cdot(I - A)^{-1}\cdot \iota$ can be interpreted as representing ``all walks from $E$ to $F$''. In combinatorics this is called the ``transfer matrix method'' \cite{stanley}.

Another comment is that $CPC_R(E,F)$ is the largest natural number $k$ such that there exist $v_1, \ldots, v_k \in E$ and $A \in \mathcal{V}_R$ such that the coherent paths $(v_j, Av_j, A^2 v_j, \ldots)$ ``independently route to $F$'' in the sense that the vectors $$y_j = \sum_{r = 0}^\infty \pi(A^r v_j) \in F$$ ($1 \leq j \leq k$) are linearly independent. That is just because $\sum \pi(A^r v_j) = \pi(\sum A^r v_j) = \pi\cdot(I - A)^{-1}v_j$, so the existence of $v_1$, $\ldots$, $v_k$ is equivalent to $\pi\cdot (I - A)^{-1}\cdot \iota$ having rank at least $k$.

We can also check that in the classical setting $CPC_R(E,F)$ does exactly equal the maximum number of vertex-disjoint paths.

\begin{prop}\label{gprop}
Let $G$ be a directed graph on the vertex set $\{g_1, \ldots, g_n\}$ and let $H$ and $K$ be sets of vertices of $G$. Let $E = {\rm span}\{e_i: g_i \in H\} \subseteq \mathbb{F}^n$ and $F = {\rm span}\{e_i: g_i \in K\} \subseteq \mathbb{F}^n$, and let $R = \{(e_i, e_j):$ there is an edge from $g_i$ to $g_j\} \subset \mathbb{F}^n\times \mathbb{F}^n$. Then $CPC_R(E, F)$ equals the maximum number of vertex-disjoint paths from $H$ to $K$ in $G$.
\end{prop}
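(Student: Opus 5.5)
Let $p$ denote the maximum number of vertex-disjoint paths from $H$ to $K$ in $G$. I would prove the two inequalities $CPC_R(E,F) \geq p$ and $CPC_R(E,F) \leq p$ separately.

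\emph{Lower bound.} I would build an explicit $A$ from $p$ disjoint paths $P_1, \ldots, P_p$. Shortening if necessary, I take each $P_j$ to meet $H$ only at its start $h_j$ and $K$ only at its end $k_j$. Then I let $A = \sum e_b e_a^*$, summed over the edges $g_a \to g_b$ used by the paths. Each such $e_be_a^*$ lies in $\mathcal{V}_R$, so $A \in \mathcal{V}_R$. Since the paths are vertex-disjoint, $A$ is a partial permutation matrix whose directed graph is a disjoint union of paths, so $A$ is nilpotent and $I-A$ is invertible. Then $(I-A)^{-1}e_{h_j} = \sum_r A^r e_{h_j}$ is the sum of the standard basis vectors along $P_j$. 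Its projection $\pi$ onto $F$ is $e_{k_j}$, because $P_j$ meets $K$ only at its endpoint. The vectors $e_{k_j}$ are distinct, so $\pi(I-A)^{-1}\iota$ has rank at least $p$. By the Guttman formula remark, $CPC_R(E,F) \geq p$.

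\emph{Upper bound.} I would use the classical Menger theorem: there is a set $X$ of $p$ vertices meeting every $H$--$K$ path. Let $D$ be the set of vertices reachable from $H$ by paths avoiding $X$ before their final vertex. I would take $\tilde{E} = {\rm span}\{e_i : g_i \in D \cup X\}$ and $\tilde{F} = {\rm span}\{e_i : g_i \notin D\}$, with $X$ and $D$ arranged to be disjoint. This is an $(E,F)$-separator of size $|X| = p$, since:
\begin{itemize}
\item $H \subseteq D \cup X$ (a vertex of $H$ either lies in $X$ or is reachable from itself), so $E \subseteq \tilde{E}$;
\item $K \cap D = \emptyset$ (otherwise some $H$--$K$ path would avoid $X$), so $F \subseteq \tilde{F}$;
\item $\tilde{F}^\perp = {\rm span}\{e_i : g_i \in D\} \subseteq \tilde{E}$;
\item every edge out of $D$ lands in $D \cup X$, which gives condition (3).
\end{itemize}

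The key step is then a general lemma: for any $R$ and any $(E,F)$-separator $(\tilde{E}, \tilde{F})$, we have $CPC_R(E,F) \leq \dim(\tilde{E} \cap \tilde{F})$. To prove it, fix $A \in \mathcal{V}_R$ with $\|A\| < 1$ and write $\mathbb{F}^n = U_1 \oplus U_2 \oplus U_3$, where $U_1 = \tilde{F}^\perp$, $U_2 = \tilde{E} \cap \tilde{F}$ and $U_3 = \tilde{E}^\perp$. Condition (3) says each $wv^*$ with $(v,w) \in R$ either kills $U_1$ (when $v \in \tilde{F}$, so $v \perp U_1$) or maps into $\tilde{E} = U_1 \oplus U_2$. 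Hence $A(U_1) \subseteq U_1 \oplus U_2$, which means the block $U_1 \to U_3$ of $A$ vanishes. For $x \in E \subseteq \tilde{E}$, let $y = (I-A)^{-1}x$, so that $y = x + Ay$. I want to show $\pi y$ is determined by a linear function of the $U_2$-component $y_2$ of $y$. Since $F \perp U_1$, we have $\pi y = \pi(y_2 + y_3)$. Taking $U_3$-components in $y = x + Ay$ gives $y_3 = (Ay)_3 = A_{32}y_2 + A_{33}y_3$, using $x_3 = 0$ and $A_{31} = 0$. So $(I - A_{33})y_3 = A_{32}y_2$. Here $I - A_{33}$ is invertible because $\|A_{33}\| \leq \|A\| < 1$. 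Thus $y_3$, and hence $\pi y$, is a linear function of $y_2 \in U_2$. It follows that $\pi(I-A)^{-1}\iota$ factors through $U_2$ and has rank at most $\dim U_2$. Taking the maximum over $A$ gives the lemma, and applying it to the separator above gives $CPC_R(E,F) \leq p$.

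The main obstacle is this block-triangular bottleneck estimate, in particular making sure the block $A_{31}$ really vanishes and that the restriction $\|A\| < 1$ loses no generality. The latter holds because the rank is generic and $\mathcal{V}_R$ is closed under scaling, so a maximizing $A$ can be rescaled into the unit ball without lowering the rank. A smaller point to check is the bookkeeping that makes $X$ disjoint from $D$ and still covers the case where vertices of $H$ lie in $X$.
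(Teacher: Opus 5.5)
Your argument is correct, but the upper bound takes a genuinely different route from the paper's.

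\textbf{Lower bound.} For $CPC_R(E,F)\geq p$ you use the same path-edge matrix $A$ as the paper. The paper certifies rank $\geq n+p$ by finding a minor of $\left[\begin{matrix}I-A&\iota\\ \pi&0\end{matrix}\right]$ whose Leibniz expansion has exactly one nonzero term. You instead compute $(I-A)^{-1}e_{h_j}$ directly along $P_j$ and apply Guttman additivity, which is a cleaner certificate of the same fact.

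\textbf{Upper bound.} For $CPC_R(E,F)\leq p$ the paper never invokes Menger. It picks $l$-element sets $H_0\subseteq H$ and $K_0\subseteq K$ making $\left[\begin{matrix}I-A&\iota_0\\ \pi_0&0\end{matrix}\right]$ generically nonsingular. It then takes a generically nonzero Leibniz term and reads $l$ vertex-disjoint paths off its permutation. You instead do the following:
\begin{enumerate}[label=(\roman*)]
\item take a minimum vertex separator $X$ from classical Menger;
\item turn it into an $(E,F)$-separator, which is exactly the construction the paper later uses in proving its Menger corollary (if you define $D$ simply as the set of vertices reachable from $H$ in $G\setminus X$, the disjointness bookkeeping disappears);
\item apply the weak-duality bound $CPC_R(E,F)\leq\dim(\tilde E\cap\tilde F)$, which is the first half of the paper's proof of Theorem \ref{qmt}.
\end{enumerate}

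Your block-triangular proof of that bound is sound. The rescaling works because a minor that is nonzero at $A$ becomes, at $tA$, a nonzero polynomial in $t$. It is therefore nonzero for all but finitely many $t$, including some with $\|tA\|<1$. The paper's kernel count is simpler, though: the block matrix sends $\tilde F^\perp\oplus E$ into $\tilde E\oplus\{0\}$. That holds for every $A$ and needs no norm restriction.

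\textbf{The trade-off.} The real difference is logical. Your proof is short given Menger. However, the paper deduces classical Menger's theorem from this proposition together with Theorem \ref{qmt}, so with your proof substituted that corollary would become circular. The paper's direct extraction of disjoint paths from a nonvanishing determinant term is what keeps the proposition independent of Menger.
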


\begin{proof}
Suppose we are given $k$ vertex-disjoint paths from $H$ to $K$. By truncating, we can assume that the last element of each of these paths is the only element of that path that belongs to $K$. Let $I_1 \subseteq H$ be the $k$ vertices at which these paths begin and let $I_2 \subseteq K$ be the $k$ vertices at which they end. Let $A_0 \in \mathcal{V}_R$ be the sum of the rank one matrices $e_je_i^*$ over all $i$ and $j$ such that $(g_i, g_j)$ is an edge in one of the $k$ paths. We will verify that the $(n+k)\times(n+k)$ submatrix of $\left[\begin{matrix}I - A_0& \iota\\ \pi&0\end{matrix}\right]$ with columns $\{1, \ldots, n\} \sqcup I_1$ and rows $\{1, \ldots, n\} \sqcup I_2$ has nonzero determinant.

Indeed, we can show that there is exactly one way to choose a nonzero entry out of each column in such a way that the choices all belong to distinct rows. This entails that there is exactly one nonzero term in the Leibniz formula for the determinant, and hence the determinant of the specified submatrix is nonzero. To see this, let $(g_{i_1}, 
\ldots, g_{i_r})$ be one of the $k$ given paths from $H$ to $K$. The column corresponding to $i_1$ in the right $\left[\begin{matrix}\iota \\ 0\end{matrix}\right]$ part of the submatrix has exactly one nonzero entry, in the $i_1$ row. So that is the entry that must be chosen from this column. Then the $i_1$ column in the left $\left[\begin{matrix} I - A_0 \\ \pi\end{matrix}\right]$ part has two nonzero entries, in the $i_1$ and $i_2$ rows, but we have already used the $i_1$ row so we must now choose the $i_2$ row. This continues up to the final $i_r$ entry, where the $i_r$ column has two nonzero entries, namely the copy of $i_r$ in $\{1, \ldots, n\}$ and the copy of $i_r$ in $I_2$; and we have to choose the copy of $i_r$ in $I_2$ because the first $i_r$ row was already used in the previous step. This shows how choosing a nonzero entry from a distinct row in each of the $i_1, \ldots, i_r \in \{1, \ldots, n\}$ columns can only be done in one way. The same is true for the indices that appear in each of the $k$ given paths from $H$ to $K$, and in any remaining column (whose corresponding vertex in $G$ does not belong to any of the $k$ paths) there is exactly one nonzero entry, arising from the $I$ contribution to the upper left $n\times n$ corner. So there is exactly one nonzero term in the Leibniz formula for the determinant, as promised. We have shown that the rank of $\left[\begin{matrix}I - A_0& \iota\\ \pi&0\end{matrix}\right]$ is at least $n + k$, so that $CPC_R(E, F) \geq k$.

Conversely, let $CPC_R(E, F) = l$, i.e., suppose the generic rank of $\left[\begin{matrix}I - A& \iota\\ \pi&0\end{matrix}\right]$ is $n + l$. By Guttman rank additivity, this entails that the generic rank of $\pi\cdot (I - A)^{-1}\cdot \iota$ is $l$. This matrix has dimensions $|K| \times |H|$, and generic rank $l$ implies that there are $l$-element subsets $H_0 \subseteq H$ and $K_0 \subseteq K$ such that $\pi_0\cdot (I-A)^{-1}\cdot \iota_0$ is generically nonsingular, where $\iota_0: E_0 \to \mathbb{F}^n$ is the natural inclusion and $\pi_0: \mathbb{F}^n \to F_0$ is the orthogonal projection, with $E_0 = {\rm span}\{e_i: g_i \in H_0\}$ and $F_0 = {\rm span}\{e_i: g_i \in K_0\}$. Equivalently, $\left[\begin{matrix}I - A& \iota_0\\ \pi_0&0\end{matrix}\right]$ is generically nonsingular. Thus there must exist at least one generically nonzero term $t$ in the Leibniz formula for its determinant. Create a new directed graph $G'$ on the same vertex set $\{g_1, \ldots, g_n\}$ by placing an edge from $g_i$ to $g_j$ if the $(i,j)$ entry of the upper left $n\times n$ block appears in $t$. In this graph, every vertex has at most one incoming edge and at most one outgoing edge. Since the diagonal entries of $I - A$ are nonzero, there could be loops in this directed graph, and there could also be cycles. But every vertex $g_i$ in $H_0$ has no incoming edge, because, as in the first part of the proof, the $i$th row has to be used in the $i$th column of the right $\left[\begin{matrix}\iota \\ 0\end{matrix}\right]$ part; similarly, every vertex in $K_0$ has no outgoing edge. Otherwise, every vertex with an ingoing edge must also have an outgoing edge, and vice versa. Since the initial vertices all belong to $H_0$ and the terminal vertices to $K_0$, there must therefore be $l$ vertex-disjoint paths in $G$ from $H$ to $K$.
\end{proof}

Now let us prove a linearized version of Menger's theorem. We require a general linear algebraic fact.

\begin{lem}\label{ranklem}
Let $\mathcal{F}$ be an infinite field, let $A \in M_{m,n}(\mathcal{F})$, $v \in \mathcal{F}^n$, and $w \in \mathcal{F}^m$, and let $x$ be an algebraically independent variable. Then the rank of $A + xwv^T$ over $\mathcal{F}(x)$ equals $$\min\left({\rm rank}_{\mathcal{F}}\left[\begin{matrix} A&w\end{matrix}\right], {\rm rank}_{\mathcal{F}}\left[\begin{matrix} A\\ v^T\end{matrix}\right]\right).$$
\end{lem}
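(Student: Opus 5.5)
Write $r = {\rm rank}(A)$, $r_1 = {\rm rank}[A\ w]$ and $r_2 = {\rm rank}\left[\begin{matrix}A\\ v^T\end{matrix}\right]$. Each of $r_1, r_2$ is either $r$ or $r+1$. Also ${\rm rank}(A + xwv^T)$ is at most $r+1$, by subadditivity of rank. The plan is to prove the upper bound $\min(r_1,r_2)$ directly and then get the matching lower bound by exhibiting a nonvanishing minor.

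\emph{Upper bound.} The column space of $A + xwv^T$ is contained in the $\mathcal{F}(x)$-span of the columns of $A$ together with $w$. Hence ${\rm rank}_{\mathcal{F}(x)}(A + xwv^T) \le {\rm rank}_{\mathcal{F}(x)}[A\ w]$. Since rank does not change under field extension, this equals $r_1$. The same argument applied to rows gives the bound $r_2$.

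\emph{Lower bound.} First, ${\rm rank}(A + xwv^T) \ge r$: pick an $r\times r$ nonsingular minor $M$ of $A$, with row set $I$ and column set $J$. The corresponding minor of $A + xwv^T$ is a polynomial in $x$ whose value at $x=0$ is $\det M \ne 0$, so it is a nonzero element of $\mathcal{F}(x)$. This settles every case where $\min(r_1,r_2) = r$.

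The remaining case, $r_1 = r_2 = r+1$, is the main step. Here $w \notin {\rm col}(A)$ and $v^T \notin {\rm row}(A)$. Choose a row set $I$ with $|I| = r$ such that the rows of $A$ indexed by $I$ form a basis of ${\rm row}(A)$. Then $\left[\begin{matrix}A_I\\ v^T\end{matrix}\right]$ has rank $r+1$, so it has a nonsingular $(r+1)\times(r+1)$ minor; let $J$ be its column set. We have freedom in how $I$ is chosen, and the choice must be arranged so that the $(r+1)\times(r+1)$ minor of $[A\ w]$ with rows $I\cup\{i_0\}$ and columns $J$ (together with $w$) is also nonzero. Rather than fight over index choices, I would argue more cleanly with a change of bases. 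Choose invertible $P,Q$ so that $PAQ = \left[\begin{matrix}I_r&0\\0&0\end{matrix}\right]$, and write $Pw = (w',w'')$ and $v^TQ = (v',v'')$. The rank conditions say exactly that $w'' \ne 0$ and $v'' \ne 0$. Now choose an $(r+1)\times(r+1)$ submatrix of $P(A + xwv^T)Q$ consisting of the first $r$ rows and columns, plus one row $i$ with $w''_i \ne 0$ and one column $j$ with $v''_j \ne 0$. Its determinant is a polynomial in $x$. Expanding along the last row, whose entries are $x w''_i v'$ followed by $x w''_i v''_j$, the determinant equals $x w''_i v''_j$ plus terms of the form $x\cdot x(\dots)$. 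After subtracting multiples of the first $r$ rows, the cross terms cancel, and the determinant is exactly $x w''_i v''_j \ne 0$. The careful bookkeeping of this determinant is the only step I expect to need attention. Since $P$ and $Q$ are invertible, the rank is $r+1$, completing the proof.

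Infinitude of $\mathcal{F}$ is not actually needed here; it would matter only if one wanted to substitute a scalar for $x$ afterwards.
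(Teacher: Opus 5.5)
Your proposal is correct and takes essentially the same route as the paper. Both change bases to isolate a row and a column outside the rank-$r$ part of $A$, then read off a nonzero coefficient of $x$ in an $(r+1)\times(r+1)$ minor; your full normalization $PAQ={\rm diag}(I_r,0)$ even makes that minor exactly $x w''_i v''_j$, which you can check by multilinearity in the columns. The only real difference is your upper bound, which argues directly over $\mathcal{F}(x)$ via invariance of rank under field extension instead of evaluating $x$ at scalars, which is why, as you note, infinitude of $\mathcal{F}$ is not needed.
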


\begin{proof}
The determinant of any square submatrix of $A + xwv^T$ is a polynomial in $x$, and hence is nonzero if and only if it is nonzero when evaluated at all but finitely many values of $x \in \mathcal{F}$. This shows that the rank of $A + xwv^T$ over $\mathcal{F}(x)$ equals the maximum, over all evaluations of $x$, of its rank over $\mathcal{F}$. But for any value of $x \in \mathcal{F}$ the columns of $A + xwv^T$ are contained in the span of the columns of $\left[\begin{matrix} A&w\end{matrix}\right]$ (i.e., they belong to ${\rm im}\left[\begin{matrix} A&w\end{matrix}\right]$), so the rank of the evaluated matrix is less than or equal to ${\rm rank}\left[\begin{matrix} A&w\end{matrix}\right]$. Similarly, for any value of $x \in \mathcal{F}$ the rows of $A + xwv^T$ are contained in the span of the rows of $\left[\begin{matrix} A\\v^T\end{matrix}\right]$ (i.e., they belong to ${\rm im} \left[\begin{matrix} A^T & v\end{matrix}\right]$), so the rank of the evaluated matrix is also less than or equal to ${\rm rank}\left[\begin{matrix} A\\ v^T\end{matrix}\right]$. This shows that $${\rm rank}_{\mathcal{F}(x)}(A + xwv^T) \leq {\rm min}\left({\rm rank}_\mathcal{F}\left[\begin{matrix} A&w\end{matrix}\right], {\rm rank}_{\mathcal{F}}\left[\begin{matrix} A\\ v^T\end{matrix}\right]\right).$$

Conversely, let $k = {\rm rank}(A)$. If $w \in {\rm im}(A)$ then $${\rm rank}_{\mathcal{F}}\left[\begin{matrix} A&w\end{matrix}\right] = k \leq {\rm rank}_{\mathcal{F}(x)}(A + xwv^T),$$ so we get the reverse inequality in this case. The same would be true if $v^T \in {\rm im}(A^T)$. Otherwise, if $w \not\in {\rm im}(A)$ and $v^T \not\in {\rm im}(A^T)$, then both $\left[\begin{matrix} A&w\end{matrix}\right]$ and $\left[\begin{matrix} A\\ v^T\end{matrix}\right]$ have rank $k + 1$. Find $\tilde{v} \in \mathcal{F}^n$ and $\tilde{w} \in \mathcal{F}^m$ such that $A\tilde{v} = \tilde{w}^TA = 0$ but $v^T\tilde{v}$ and $\tilde{w}^Tw$ are both nonzero. Relative to a basis of $\mathcal{F}^n$ whose first element is $\tilde{v}$ and a dual basis of $\mathcal{F}^m$ whose first element is $\tilde{w}^T$, the matrix of $A$ has all $0$'s in its leftmost column and topmost row, but the top left entry $a = \tilde{w}^Twv^T\tilde{v}$ of $wv^T$ is nonzero. Also, there is a $k\times k$ submatrix of $A$ that excludes the leftmost column and topmost row and whose determinant $b$ is nonzero. Evaluating the coefficient of $x$ in the $(k+1)\times(k+1)$ minor of $A + xwv^T$ that also includes the leftmost column and topmost row then yields $ab \neq 0$, so the rank of $A + xwv^T$ is also $k + 1$. Thus the equality holds in this case as well.
\end{proof}

\begin{prop}\label{rankprop}
Let $\mathcal{F}$ be an infinite field, let $A \in M_{m,n}(\mathcal{F})$, $v_i \in \mathcal{F}^n$, and $w_i \in \mathcal{F}^m$ ($1 \leq i \leq r$), and let $x_1$, $\ldots$, $x_r$ be algebraically independent variables. Then the rank of $A + \sum_{i=1}^r x_iw_iv_i^T$ over $\mathcal{F}(x_1, \ldots, x_r)$ equals $$\min_{S \subseteq \{1, \ldots, r\}} {\rm rank}_{\mathcal{F}} \left[\begin{matrix} A&W_S\\ V_{S^c}^T&0\end{matrix}\right]$$ where $W_S$ is the matrix with columns $w_i$, $i \in S$, and $V_{S^c}$ is the matrix with columns $v_i$, $i \not\in S$.
\end{prop}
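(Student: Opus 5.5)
Induct on $r$, applying Lemma \ref{ranklem} one variable at a time. The case $r = 0$ is trivial: only $S = \emptyset$ occurs, and the matrix reduces to $A$. The case $r = 1$ is exactly Lemma \ref{ranklem}. The choice $S = \{1\}$ gives $\left[\begin{matrix} A & w_1\end{matrix}\right]$, and $S = \emptyset$ gives $\left[\begin{matrix} A\\ v_1^T\end{matrix}\right]$.

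For the inductive step, set $\mathcal{K} = \mathcal{F}(x_1, \ldots, x_{r-1})$, which is an infinite field, and $A' = A + \sum_{i<r} x_iw_iv_i^T \in M_{m,n}(\mathcal{K})$. Then $x_r$ is algebraically independent over $\mathcal{K}$, so Lemma \ref{ranklem} over $\mathcal{K}$ gives
$${\rm rank}\Big(A + \sum_{i=1}^r x_iw_iv_i^T\Big) = \min\left({\rm rank}_{\mathcal{K}}\left[\begin{matrix} A'&w_r\end{matrix}\right], {\rm rank}_{\mathcal{K}}\left[\begin{matrix} A'\\ v_r^T\end{matrix}\right]\right).$$
The matrix $\left[\begin{matrix} A' & w_r\end{matrix}\right]$ equals $\left[\begin{matrix} A & w_r\end{matrix}\right] + \sum_{i<r} x_i w_i \tilde v_i^T$ with $\tilde v_i = v_i \oplus 0 \in \mathcal{F}^{n+1}$. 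By the induction hypothesis its rank is
$$\min_{S\subseteq\{1,\ldots,r-1\}} {\rm rank}\left[\begin{matrix} A & w_r & W_S\\ V_{S^c}^T & 0 & 0\end{matrix}\right].$$
After reordering columns, this is the matrix in the statement for the set $S\cup\{r\}$. Similarly, $\left[\begin{matrix} A'\\ v_r^T\end{matrix}\right] = \left[\begin{matrix} A\\ v_r^T\end{matrix}\right] + \sum_{i<r}x_i (w_i\oplus 0)v_i^T$. Its rank is the minimum over $S \subseteq \{1,\ldots,r-1\}$ of the rank of
$$\left[\begin{matrix} A & W_S\\ v_r^T & 0\\ V_{S^c}^T & 0\end{matrix}\right],$$
where complements are taken in $\{1,\ldots,r-1\}$. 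After reordering rows, this is the matrix for $S \subseteq\{1,\ldots,r\}$ with $r \notin S$. Taking the minimum of the two minima then runs over all $S \subseteq \{1,\ldots,r\}$, which gives the formula.

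The main point needing care is that the induction hypothesis is applied with the enlarged matrices $\left[\begin{matrix} A & w_r\end{matrix}\right]$ and $\left[\begin{matrix} A\\ v_r^T\end{matrix}\right]$ in place of $A$. The padded vectors $\tilde v_i = v_i\oplus 0$ and $w_i \oplus 0$ make the zero blocks in the statement appear in the right places. This requires stating the induction for arbitrary $m, n$ and an arbitrary infinite base field. One must also check that the base field is $\mathcal{F}$ and not $\mathcal{K}$ where the hypothesis is used. The rank in Lemma \ref{ranklem} is computed over $\mathcal{K}$, but all the relevant matrices have entries in $\mathcal{F}(x_1,\ldots,x_{r-1})$, with the variables entering only through the perturbation terms. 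The induction hypothesis is applied to the base matrices over $\mathcal{F}$, which is the form in which it holds, so this is consistent.
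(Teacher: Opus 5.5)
Your proposal is correct and follows the paper's own argument. Like the paper, you induct on $r$ using Lemma \ref{ranklem} over the intermediate field. You then apply the induction hypothesis to the padded matrices $\left[\begin{matrix} A' & w\end{matrix}\right]$ and $\left[\begin{matrix} A'\\ v^T\end{matrix}\right]$, and the two minima account for the sets $S$ that do and do not contain the distinguished index.

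The differences are cosmetic. The paper peels off $x_1$ rather than $x_r$. It also proves the inequality $\leq$ separately, via the factorization $A + WXV^T = \left[\begin{matrix}I& W_{S^c}X_{S^c}\end{matrix}\right]\left[\begin{matrix}A& W_S\\ V_{S^c}^T&0\end{matrix}\right]\left[\begin{matrix}I\\ X_SV_S^T\end{matrix}\right]$. Your version does not need this, since each inductive step already yields equality.
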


\begin{proof}
Let $X$ be the diagonal matrix with entries $x_1, \ldots, x_r$ and for any $S \subseteq \{1, \ldots, r\}$ let $X_S$ and $X_{S^c}$ be the smaller diagonal matrices with only $i \in S$ or $i \not\in S$ entries. Then $A + \sum_{i=1}^r x_iw_iv_i^T = A + WXV^T$ where $V$ and $W$ are the matrices with columns $v_i$ and $w_i$ ($1 \leq i \leq r$). Factoring $$A + WXV^T = \left[\begin{matrix}I& W_{S^c}X_{S^c}\end{matrix}\right]\left[\begin{matrix}A& W_S\\ V_{S^c}^T&0\end{matrix}\right]\left[\begin{matrix}I\\ X_SV_S^T\end{matrix}\right]$$ then yields that the rank of $A + \sum_{i=1}^r x_iw_iv_i^T$ is less than or equal to the rank of $\left[\begin{matrix} A&W_S\\ V_{S^c}^T&0\end{matrix}\right]$. As $S$ was arbitrary, we get $${\rm rank}\left(A + \sum_{i=1}^r x_iw_iv_i^T\right) \leq \min_{S \subseteq \{1, \ldots, r\}} {\rm rank} \left[\begin{matrix} A&W_S\\ V_{S^c}^T&0\end{matrix}\right].$$

The proof of the reverse inequality goes by induction on $r$. The $r = 1$ case is Lemma \ref{ranklem}. Assuming the proposition holds for $r - 1$, we apply the lemma to $A' = A + \sum_{i=2}^r x_iw_iv_i^T$, $v_1$, and $w_1$ over the field $\mathcal{F}(x_2, \ldots, x_r)$. This yields that the rank of $A + \sum_{i=1}^r x_iw_iv_i^T$ over $\mathcal{F}(x_1, \ldots, x_r)$ equals $$\min\left({\rm rank}\left[\begin{matrix}A'& w_1\end{matrix}\right], {\rm rank} \left[\begin{matrix}A'\\ v_1^T\end{matrix}\right]\right),$$ with both ranks evaluated over $\mathcal{F}(x_2, \ldots, x_r)$. We can now apply the induction hypothesis to $$\left[\begin{matrix}A'& w_1\end{matrix}\right] = \left[\begin{matrix}A&w_1\end{matrix}\right] + \sum_{i=2}^r x_iw_i\left[\begin{matrix}v_i\\0\end{matrix}\right]^T$$ to get that the rank of this matrix equals $$\min_{S \subseteq \{2, \ldots, r\}} {\rm rank}\left[\begin{matrix}A&w_1&W_S\\ V_{S^c}^T&0&0\end{matrix}\right].$$ By similar reasoning, the rank of $\left[\begin{matrix}A'\\ v_1^T\end{matrix}\right]$ equals $$\min_{S \subseteq \{2, \ldots, r\}} {\rm rank}\left[\begin{matrix}A&W_S\\v_1^T&0\\V_{S^c}^T&0\end{matrix}\right].$$ Taking the minimum of these two formulas yields the minimum of $${\rm rank} \left[\begin{matrix} A&W_S\\ V_{S^c}^T&0\end{matrix}\right]$$ over all $S \subseteq \{1, \ldots, r\}$, with the first formula accounting for all $S$ containing $1$ and the second formula accounting for all $S$ not containing $1$.
\end{proof}

\begin{thm}\label{qmt} (Linear Menger's theorem)
Let $n \geq 1$, let $R \subseteq \mathbb{F}^n\times \mathbb{F}^n$, and let $E$ and $F$ be linear subspaces of $\mathbb{F}^n$. Then the coherent path capacity between $E$ and $F$ relative to $R$ equals the minimum size of an $(E,F)$-separator.
\end{thm}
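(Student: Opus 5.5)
The plan is to compute the generic rank in the definition of $CPC_R(E,F)$ with Proposition \ref{rankprop}, and then to match each term of the resulting minimum with an $(E,F)$-separator.

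\textbf{Step 1: reduce to a finite relation.} The separator condition (3) for a pair $(v,w)$ with respect to $(\tilde E,\tilde F)$ says that $v\in\tilde F$ or $w\in\tilde E$. This is equivalent to $Q\,wv^*P=0$, where $Q$ and $P$ are the orthogonal projections onto $\tilde E^\perp$ and $\tilde F^\perp$. So (3) holds for all of $R$ if and only if $Q\mathcal V_RP=\{0\}$, a linear condition on the span. I will therefore choose finitely many pairs $(v_1,w_1),\ldots,(v_r,w_r)\in R$ whose rank one matrices span $\mathcal V_R$. Separators for $R$ and for this finite subrelation then coincide, and $\mathcal V_R$ is unchanged.

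\textbf{Step 2: express the generic rank as a minimum.} Write
$$M_0=\left[\begin{matrix}I&\iota\\ \pi&0\end{matrix}\right]$$
as a matrix over $\mathbb F$ after choosing bases of $E$ and $F$. A general $A=\sum x_iw_iv_i^*\in\mathcal V_R$ contributes $-\sum x_i \hat w_i\hat v_i^*$, where $\hat w_i=w_i\oplus 0$ and $\hat v_i=v_i\oplus 0$.

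\begin{itemize}
\item Since $\mathbb F$ is infinite, the maximum rank over specializations equals the rank over $\mathbb F(x_1,\ldots,x_r)$, by the polynomial-minor argument from Lemma \ref{ranklem}. Conjugation in the $\mathbb C$ case is harmless: it only replaces $v_i$ by $\bar v_i$ in the transpose formulation.
\item Proposition \ref{rankprop} then gives
$$n+CPC_R(E,F)=\min_{S}{\rm rank}\left[\begin{matrix}I&\iota&W_S\\ \pi&0&0\\ V_{S^c}^*&0&0\end{matrix}\right].$$
\item By the Schur complement (Guttman additivity, with invertible corner $I$), this rank is $n+{\rm rank}(CB)$. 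Here $B=[\iota\ W_S]$ has image $E_S:=E+{\rm span}\{w_i:i\in S\}$, and $C$ has kernel $F_S^\perp$ with $F_S:=F+{\rm span}\{v_i:i\notin S\}$.
\item Hence ${\rm rank}(CB)=\dim E_S-\dim(E_S\cap F_S^\perp)=\dim P_{F_S}(E_S)$, where $P_{F_S}$ is the orthogonal projection onto $F_S$.
\item So $CPC_R(E,F)=\min_S\dim P_{F_S}(E_S)$.
\end{itemize}

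\textbf{Step 3: matching separators both ways.} For the upper bound on the minimum separator size, given $S$ take $\tilde F=F_S$ and $\tilde E=E_S+F_S^\perp$.
\begin{itemize}
\item Conditions (1) and (2) are immediate.
\item Condition (3) holds for the $r$ chosen pairs: $w_i\in\tilde E$ for $i\in S$ and $v_i\in\tilde F$ for $i\notin S$. By Step 1 it then holds for all of $R$.
\item Its size is $(E_S+F_S^\perp)\cap F_S=P_{F_S}(E_S)$.
\end{itemize}
Hence the minimum separator size is at most $CPC_R(E,F)$.

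For the lower bound, given any separator $(\tilde E,\tilde F)$, let $S=\{i:w_i\in\tilde E\}$. Then $v_i\in\tilde F$ for $i\notin S$, so $E_S\subseteq\tilde E$ and $F_S\subseteq\tilde F$. Since $P_{F_S}=P_{F_S}P_{\tilde F}$, we get
$$\dim P_{F_S}(E_S)\le\dim P_{\tilde F}(\tilde E).$$
Condition (2) says $\tilde F^\perp\subseteq\tilde E$, so $\tilde E=\tilde F^\perp\oplus(\tilde E\cap\tilde F)$. Projecting onto $\tilde F$ kills the first summand, which gives $P_{\tilde F}(\tilde E)=\tilde E\cap\tilde F$. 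Thus $CPC_R(E,F)$ is at most the size of every separator, and the two quantities are equal.

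The main obstacle I expect is Step 2: setting up Proposition \ref{rankprop} correctly on the enlarged $(n+\dim F)\times(n+\dim E)$ block matrix, and carrying out the Schur-complement identification of the rank of the bordered matrix with $\dim P_{F_S}(E_S)$. Once that formula is in hand, the separator correspondence in Step 3 is routine bookkeeping.
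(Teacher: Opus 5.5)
Your proposal is correct and is essentially the paper's proof: the reduction to finitely many spanning pairs, the use of Proposition~\ref{rankprop} plus Guttman additivity to get $CPC_R(E,F)=\min_S\dim P_{F_S}(E_S)$, and the separator $\tilde F=F_S$, $\tilde E=E_S+F_S^\perp$ are the paper's steps, with its $C$ and $D$ being your $E_S$ and $F_S$. The one real difference is the easy inequality: the paper proves it directly by noting that the block matrix maps $\tilde F^\perp\oplus E$ into $\tilde E\oplus\{0\}$ for every $A\in\mathcal{V}_R$, which needs neither finiteness nor Proposition~\ref{rankprop}, whereas you derive it from the min formula via $S=\{i: w_i\in\tilde E\}$. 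Both arguments work, and your Step 1 usefully makes explicit why a separator built from the finite subrelation is a separator for all of $R$, a point the paper only calls straightforward.
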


\begin{proof}
Let $A \in \mathcal{V}_R$ and let $(\tilde{E}, \tilde{F})$ be an $(E,F)$-separator. Observe that $\left[\begin{matrix}I - A& \iota\\ \pi&0\end{matrix}\right]$ maps $\tilde{F}^\perp \oplus E$ into $\tilde{E} \oplus \{0\}$. Thus its kernel has dimension at least $${\rm dim}(\tilde{F}^\perp \oplus E) - {\rm dim}(\tilde{E}) = n - {\rm dim}(\tilde{F}) + {\rm dim}(E) - {\rm dim}(\tilde{E}).$$ Since its domain $\mathbb{F}^n \oplus E$ has dimension $n + {\rm dim}(E)$, its rank is therefore at most
\begin{eqnarray*}(n + {\rm dim}(E)) - (n - {\rm dim}(\tilde{F}) + {\rm dim}(E) - {\rm dim}(\tilde{E})) &=& {\rm dim}(\tilde{E}) + {\rm dim}(\tilde{F})\\ &=& n + {\rm dim}(\tilde{E} \cap \tilde{F}).\end{eqnarray*} Thus $${\rm rank}\left[\begin{matrix}I - A& \iota\\ \pi&0\end{matrix}\right] - n \leq {\rm dim}(\tilde{E} \cap \tilde{F}).$$ Since $A$, $\tilde{E}$, and $\tilde{F}$ were arbitrary, this shows that the coherent path capacity is less than or equal to the minimum size of an $(E,F)$-separator.

For the reverse inequality, first reduce $R$ to a finite set as we did in Corollary \ref{imp}. Say $R = \{(v_1, w_1), \ldots, (v_r, w_r)\}$. For each $1 \leq i \leq r$ let $\tilde{v}_i = v_i \oplus 0 \in \mathbb{F}^n \oplus E$ and $\tilde{w}_i = w_i \oplus 0 \in \mathbb{F}^n \oplus F$. Then for any $A \in \mathcal{V}_R$ we can write $$\left[\begin{matrix}I - A& \iota\\ \pi&0\end{matrix}\right] = \left[\begin{matrix}I& \iota\\ \pi&0\end{matrix}\right] + \sum_{i=1}^r x_i\tilde{w}_i\tilde{v}_i^*$$ for some $x_1, \ldots, x_r \in \mathbb{F}$. According to Proposition \ref{rankprop}, there is a subset $S \subseteq \{1, \ldots, r\}$ such that the generic rank of this matrix equals the rank of the matrix $$\left[\begin{matrix}I & \iota & W_S\\ \pi & 0 & 0\\ V_{S^c}^* & 0 & 0\end{matrix}\right]$$ where $W_S$ is the matrix with columns $w_i$, $i \in S$, and $V_{S^c}$ is the matrix with columns $v_i$, $i \not\in S$. By Guttman rank additivity, this rank equals $n$ plus the rank of $\left[\begin{matrix}\pi\\ V_{S^c}^*\end{matrix}\right]\left[\begin{matrix}\iota & W_S\end{matrix}\right]$. Thus $$CPC_R(E,F) = {\rm rank}\left(\left[\begin{matrix}\pi\\ V_{S^c}^*\end{matrix}\right]\left[\begin{matrix}\iota & W_S\end{matrix}\right]\right).$$

Let $C = E + {\rm span}\{w_i: i \in S\} = {\rm im}\left[\begin{matrix}\iota & W_S\end{matrix}\right]$ and $D = F + {\rm span}
\{v_i: i \not\in S\} = \left({\rm ker}\left[\begin{matrix}\pi\\ V_{S^c}^*\end{matrix}\right]\right)^\perp$. Then the rank of the product $\left[\begin{matrix}\pi\\ V_{S^c}^*\end{matrix}\right]\left[\begin{matrix}\iota & W_S\end{matrix}\right]$ equals ${\rm dim}(C) - {\rm dim}(C \cap D^\perp)$ (the dimension of the image of $\left[\begin{matrix}\iota & W_S\end{matrix}\right]$ minus the dimension that we lose when $\left[\begin{matrix}\pi\\ V_{S^c}^*\end{matrix}\right]$ is applied to it). We define $\tilde{E} = C + D^\perp$ and $\tilde{F} = D$. Then $\tilde{F}^\perp \subseteq \tilde{E}$ and so \begin{eqnarray*}{\rm dim}(\tilde{E} \cap \tilde{F}) &=& {\rm dim}(\tilde{E}) - {\rm dim}(\tilde{F}^\perp)\\ &=& {\rm dim}(C + D^\perp) - {\rm dim}(D^\perp)\\ &=&{\rm dim}(C) - {\rm dim}(C \cap D^\perp)\\ &=& CPC_R(E,F).\end{eqnarray*} As it is straightforward to verify that $\tilde{E}$ and $\tilde{F}$ constitute an $(E,F)$-separator, this shows that the coherent path capacity is greater than or equal to the minimum size of an $(E,F)$-separator. Thus we conclude that the two quantities are equal.
\end{proof}

The classical theorem of Menger is an easy consequence. If $H$ and $K$ be sets of vertices in a finite directed graph, an {\it $(H,K)$-separator} is a set of vertices $S$ such that every path that begins in $H$ and ends in $K$ must contain a vertex in $S$.

\begin{cor} (Menger's theorem)
Let $H$ and $K$ be sets of vertices in a finite directed graph $G$. Then the maximum number of vertex-disjoint paths from $H$ to $K$ equals the minimum cardinality of an $(H,K)$-separator.
\end{cor}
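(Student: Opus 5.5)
The plan is to encode the graph as in Proposition \ref{gprop}. Set $E = {\rm span}\{e_i: g_i \in H\}$, $F = {\rm span}\{e_i: g_i \in K\}$ and $R = \{(e_i,e_j): g_i \to g_j \mbox{ is an edge}\}$. By Proposition \ref{gprop}, $CPC_R(E,F)$ equals the maximum number of vertex-disjoint paths from $H$ to $K$. By Theorem \ref{qmt}, $CPC_R(E,F)$ also equals the minimum size of an $(E,F)$-separator. It therefore suffices to show that the minimum size of an $(E,F)$-separator equals the minimum cardinality of an $(H,K)$-separator. The inequality ``max paths $\leq$ min separator'' is classical anyway, so the real content is the reverse direction. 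Even so, I will establish both comparisons of separators.

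First I convert a vertex separator into a linear one. Given an $(H,K)$-separator $S$, let $X$ be the set of vertices reachable from $H$ by a path avoiding $S$ except possibly at its last vertex, so $S$-vertices reachable this way belong to $X$. Put $\tilde{E} = {\rm span}\{e_i: g_i \in X \cup H\}$ and $\tilde{F} = {\rm span}\{e_i: g_i \notin X\} + {\rm span}\{e_i : g_i\in S\}$. Then $\tilde{F}^\perp$ is spanned by the $e_i$ with $g_i \in X\setminus S$. I will check the three separator conditions:
\begin{itemize}[label={}]
\item $E \subseteq \tilde{E}$ holds by construction.
\item $F \subseteq \tilde{F}$ holds because a $K$-vertex lying in $X$ must lie in $S$.
\item $\tilde{F}^\perp \subseteq \tilde{E}$ holds since $X\setminus S \subseteq X$.
\item For an edge $(e_i,e_j)$, either $g_i \notin X\setminus S$, so $e_i \in \tilde F$, or $g_i\in X\setminus S$, in which case $g_j \in X$ and $e_j \in \tilde{E}$.
\end{itemize}
The size ${\rm dim}(\tilde E\cap\tilde F)$ is at most $|S|$, after checking that $\tilde E\cap\tilde F$ is spanned by the $e_i$ with $g_i\in S$ together with the $e_i$ with $g_i\in H\setminus X$. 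To avoid the second kind, I add $H$ to $X$ from the start, treating each $H$-vertex as reached by a trivial path. Then any $H$-vertex outside $S$ lies in $X\setminus S$, and the intersection is exactly spanned by $S\cap X$. This only needs to be done carefully and is routine.

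Next I convert a linear separator $(\tilde E,\tilde F)$ into a vertex separator of no larger size. Define $S = \{g_i: e_i\not\perp \tilde E\cap\tilde F\}$; this is the wrong direction for size. The better move is to mimic the reduction in Kőnig's corollary and shrink to coordinate subspaces. Let $\tilde F_0={\rm span}\{e_i: e_i\in\tilde F\}$, so $\tilde F_0^\perp$ is spanned by the $e_i\notin\tilde F$. Since every $(v,w)\in R$ has $v$ a basis vector, condition (3) still holds with $\tilde F_0$ in place of $\tilde F$. Now set $\tilde E_0={\rm span}\{e_i: e_i\in\tilde E\}+\tilde F_0^\perp$. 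The size is then the number of $i$ with $e_i \in \tilde E\cap \tilde F_0$. Is this at most ${\rm dim}(\tilde E\cap\tilde F)$? The subspace ${\rm span}\{e_i : e_i\in \tilde E\cap\tilde F\}$ lies inside $\tilde E\cap\tilde F$, so yes.

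Two points remain for the coordinate separator. I must confirm that $\tilde F_0^\perp\subseteq\tilde E$ is preserved. The natural argument is: if $e_i\notin\tilde F$, is $e_i\in \tilde E$? Not necessarily, and this is the main obstacle. I plan to get around it by instead defining the vertex separator directly as $S=\{g_i: e_i\in \tilde E\cap\tilde F\}$ and proving it meets every $H$–$K$ path. Take a path $g_{i_1}\to\cdots\to g_{i_r}$ avoiding $S$. Then $e_{i_1}\in E\subseteq\tilde E$, so $e_{i_1}\notin\tilde F$. Condition (3) applied to the edge $(e_{i_1}, e_{i_2})$ forces $e_{i_2}\in\tilde E$, so $e_{i_2}\notin \tilde F$, and so on inductively. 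This ends with $e_{i_r}\notin\tilde F$, contradicting $e_{i_r}\in F\subseteq\tilde F$. So $S$ is an $(H,K)$-separator with $|S|\le{\rm dim}(\tilde E\cap\tilde F)$, which sidesteps the issue. Combined with Proposition \ref{gprop}, Theorem \ref{qmt}, and the trivial classical inequality, this gives the corollary.
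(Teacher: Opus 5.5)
Your proof is correct, and its skeleton is the paper's. You encode $G$ as in Proposition \ref{gprop}, invoke Theorem \ref{qmt}, and compare the two kinds of separators in both directions. Your conversion of a vertex separator into a linear one is essentially the paper's reachability construction. You keep only the reachable part of $S$ in $\tilde{E}$, which gives size $|X\cap S|\le |S|$ rather than exactly $|S|$.

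The real difference is in the converse step. The paper first places a loop at every vertex, so that condition (3) forces each $e_i$ into $\tilde{E}$ or $\tilde{F}$. This makes $\tilde{H}=\{g_i: e_i\notin\tilde{F}\}$ and $\tilde{K}=\{g_i: e_i\notin\tilde{E}\}$ disjoint, and the paper then bounds $|S|=n-|\tilde{H}|-|\tilde{K}|$ by $\dim(\tilde{E})+\dim(\tilde{F})-n$.

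Your set $\{g_i: e_i\in\tilde{E}\cap\tilde{F}\}$ is exactly the paper's $G\setminus(\tilde{H}\cup\tilde{K})$, but you handle it differently:
\begin{itemize}
\item You bound its size directly, since the corresponding $e_i$ are linearly independent vectors in $\tilde{E}\cap\tilde{F}$.
\item You prove it separates by propagating ``$e_{i_k}\in\tilde{E}$, hence $e_{i_k}\notin\tilde{F}$'' along the path, using only conditions (1) and (3).
\end{itemize}
So your route needs neither the loops nor condition (2) in this direction, and Proposition \ref{gprop} and Theorem \ref{qmt} are applied to $G$ as given. This makes it slightly more economical than the paper's argument.

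The detour through the coordinate pair $(\tilde{E}_0,\tilde{F}_0)$ can be deleted. Incidentally, the obstacle you flagged there is not real: $\tilde{E}_0$ was defined to contain $\tilde{F}_0^\perp$, so that pair is already an $(E,F)$-separator of size $|\{i: e_i\in\tilde{E}\cap\tilde{F}\}|$.
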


\begin{proof}
Place a loop at each vertex of $G$ and define $E$, $F$, and $R$ as in Proposition \ref{gprop}. We know from that result that the maximum number of vertex-disjoint paths from $H$ to $K$ equals $CPC_R(E,F)$, and we know from Theorem \ref{qmt} that $CPC_R(E,F)$ equals the minimum size of an $(E,F)$-separator. So we need only show that the minimum size of an $(E,F)$-separator (in the linear algebra sense) equals the minimum cardinality of an $(H,K)$-separator (in the graph theory sense).

Given an $(H,K)$-separator $S$, let $S'$ be the set of vertices reachable from $H$ in $G\setminus S$. Then ${\rm span}\{e_i: g_i \in S \cup S'\}$ and ${\rm span}\{e_i: g_i \in G\setminus S'\}$ constitute an $(E,F)$-separator whose size is $|S|$. Thus the minimum size of an $(E,F)$-separator is less than or equal to the minimum cardinality of an $(H,K)$-separator.

Conversely, suppose $\tilde{E}$ and $\tilde{F}$ constitute an $(E,F)$-separator. Define $\tilde{H} = \{g_i: e_i \not\in \tilde{F}\}$ and $\tilde{K} = \{g_i: e_i \not\in \tilde{E}\}$. Then $H \cap \tilde{K} = K \cap \tilde{H} = \emptyset$, and by condition (3) in the definition of an $(E,F)$-separator there are no edges from $\tilde{H}$ to $\tilde{K}$. So there is no path from $H$ to $K$ that lies entirely in $\tilde{H} \cup \tilde{K}$, i.e., $S = G\setminus (\tilde{H} \cup \tilde{K})$ is an $(H,K)$-separator. Also $|\tilde{H}| \geq {\rm dim}(\tilde{F}^\perp) = |G| - {\rm dim}(\tilde{F})$ and $|\tilde{K}| \geq {\rm dim}(\tilde{E}^\perp) = |G| - {\rm dim}(\tilde{E})$, so $$|S| = |G| - (|\tilde{H}| + |\tilde{K}|)  \leq {\rm dim}(\tilde{E}) + {\rm dim}(\tilde{F}) - |G| = {\rm dim}(\tilde{E} \cap \tilde{F}).$$ ($\tilde{H} \cap \tilde{K} = \emptyset$ since we began by placing a loop at each vertex. This forces every $e_i$ to belong to either $\tilde{E}$ or $\tilde{F}$.) Thus the minimum cardinality of an $(H,K)$-separator is less than or equal to the minimum size of an $(E,F)$-separator. We conclude that the two values are equal, and that completes the proof.
\end{proof}

Theorem \ref{qmt} also easily entails the linear Kőnig's theorem, Theorem \ref{qkt}. Given $R \subseteq \mathbb{F}^n \times \mathbb{F}^m$, define $R' \subset \mathbb{F}^{n+m} \times \mathbb{F}^{n+m}$ by setting $R' = \{(v \oplus 0, 0 \oplus w): (v,w) \in R\}$. Also let $E = \mathbb{F}^n \oplus \{0\}$ and $F = \{0\} \oplus \mathbb{F}^m$. It is straightforward to verify that any cover $(E_0, F_0)$ for $R$ generates an $(E,F)$-separator of the same size, namely $(\mathbb{F}^n \oplus F_0, E_0 \oplus \mathbb{F}^m)$, and every $(E,F)$-separator arises in this way from a cover for $R$. (This construction may help explain why $\tilde{E}$ and $\tilde{F}$ appear to have switched places in the definition of an $(E,F)$-separator, where the pair $(\tilde{F}, \tilde{E})$ is a cover, not $(\tilde{E}, \tilde{F})$.)
So the minimum size of an $(E,F)$-separator for $R'$ equals the minimum size of a cover for $R$. Also, every element of $\mathcal{V}_{R'}$ has the form $\left[\begin{matrix}0&0\\ A&0\end{matrix}\right]$ for some $A \in \mathcal{V}_R$, and vice versa, so the coherent path capacity between $E$ and $F$ equals the maximum value of $${\rm rank}\left[\begin{matrix}I_n&0&I_n\\ A&I_m&0\\ 0&I_m&0\end{matrix}\right] - (n+m)$$ over $A \in \mathcal{V}_R$. Subtracting the third column from the first and the third row from the second in this matrix yields $\left[\begin{matrix}0&0&I_n\\ A&0&0\\ 0&I_m&0\end{matrix}\right]$, whose rank is ${\rm rank}(A) + n + m$. But the maximum rank of $A \in \mathcal{V}_R$ is, by Propositions \ref{lafact} and \ref{lafact2}, precisely the maximum size of a matching for $R$. Thus the equality between coherent path capacity and the minimum size of an $(E,F)$-separator in Theorem \ref{qmt} entails the equality between the maximum size of a matching and the minimum size of a cover in Theorem \ref{qkt}.

\subsection{The Lindström–Gessel–Viennot lemma}

In the last section we noted that the coherent path capacity between $E \subseteq \mathbb{F}^n$ and $F \subseteq \mathbb{F}^n$, relative to $R = \{(v_1, w_1), \ldots, (v_r, w_r)\}$, is the maximum rank of $\pi\cdot (I - A)^{-1}\cdot \iota$ over all $A \in \mathcal{V}_R$ such that $I - A$ is invertible, with the intuition that $(I - A)^{-1} = I + A + A^2 + \cdots$ represents all walks in a graph. Equivalently, it is the rank over $\mathbb{F}(x_1, \ldots, x_r)$ of the matrix $\pi\cdot (I - WXV^*)^{-1}\cdot \iota$ where $V$ and $W$ are the $n \times r$ matrices with columns $v_i$ and $w_i$, and $X = {\rm diag}(x_1, \ldots, x_r)$ is a matrix of formal variables.

If $E$ and $F$ have the same dimension and the rank is full, we can give a formula for the determinant of $\pi\cdot (I - WXV^*)^{-1}\cdot \iota$ (and hence, by substitution, of $\pi\cdot (I - A)^{-1}\cdot \iota$ for any $A \in \mathcal{V}_R$). In some loose sense, this determinant counts the number of ways to choose vertex-disjoint paths from $E$ to $F$. Let $a_1, \ldots, a_k$ and $b_1, \ldots, b_k$ be vectors in $\mathbb{F}^n$ representing sources and sinks, and let $A, B \in M_{n,k}$ be the matrices whose columns are the $a_i$'s and the $b_i$'s. For any $S \subseteq \{1, \ldots, r\}$ let $V_S$ and $W_S$ be the $n \times |S|$ submatrices of $V$ and $W$ consisting of only those columns whose indices belong to $S$, let $X_S$ be the $|S| \times |S|$ matrix with diagonal entries $x_i$ for $i \in S$, and let $x_S = \prod_{i \in S} x_i$. Also let $G_S$ be the $(|S| + k) \times (|S| + k)$ matrix $$ G_S = \left[\begin{matrix} V_S^* W_S & V_S^* A \\ B^* W_S & B^* A \end{matrix}\right].$$

\begin{thm}\label{lgthm}
With the above notation, $$\det\Big( B^* (I_n - WXV^*)^{-1} A \Big) = \frac{ \displaystyle \sum (-1)^{|S|} x_S \det G_S }{ \displaystyle \sum (-1)^{|S|} x_S \det(V_S^* W_S)},$$ both sums being taken over all $S \subseteq \{1, \ldots, r\}$.
\end{thm}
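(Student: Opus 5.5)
We plan to reduce both sides to determinants of bordered matrices and then expand those determinants multilinearly in the variables $x_i$.

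\textbf{Step 1: the denominator.} By Sylvester's determinant identity,
$$\det(I_n - WXV^*) = \det(I_r - XV^*W).$$
Since $X$ is diagonal, the matrix $I_r - XV^*W$ has $i$th row equal to $e_i^T - x_i(V^*W)_{i,\cdot}$. Expanding the determinant multilinearly in its rows gives
$$\det(I_r - XV^*W) = \sum_S (-1)^{|S|} x_S \det\big((V^*W)_{S,S}\big).$$
Here $(V^*W)_{S,S} = V_S^*W_S$, with the convention $\det G_\emptyset$-type terms equal to $1$ for $S=\emptyset$. This is exactly the denominator.

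\textbf{Step 2: the numerator via Schur complements.} Form the bordered matrix
$$M = \left[\begin{matrix} I_n - WXV^* & A\\ B^* & 0\end{matrix}\right].$$
By the Schur complement formula,
$$\det M = \det(I_n - WXV^*)\cdot\det\big(-B^*(I_n - WXV^*)^{-1}A\big) = (-1)^k\det(I_n - WXV^*)\cdot\det\big(B^*(I_n - WXV^*)^{-1}A\big).$$
So it suffices to show
$$(-1)^k \det M = \sum_S (-1)^{|S|} x_S \det G_S.$$

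\textbf{Step 3: computing $\det M$.} Write $M = N_0 - \widetilde W X \widetilde V^*$, where
$$N_0 = \left[\begin{matrix} I_n & A\\ B^* & 0\end{matrix}\right], \qquad \widetilde W = \left[\begin{matrix}W\\0\end{matrix}\right], \qquad \widetilde V = \left[\begin{matrix}V\\0\end{matrix}\right].$$
We treat the generic case first, when $N_0$ is invertible, i.e.\ when $B^*A$ is invertible. By the matrix determinant lemma,
$$\det M = \det N_0\cdot\det\big(I_r - X\widetilde V^* N_0^{-1}\widetilde W\big).$$
A standard block inverse computation gives
$$\widetilde V^* N_0^{-1}\widetilde W = V^*\big(I - A(B^*A)^{-1}B^*\big)W.$$
Expanding in rows as in Step 1 gives
$$\sum_S (-1)^{|S|} x_S \det\Big(V_S^*W_S - V_S^*A(B^*A)^{-1}B^*W_S\Big).$$
Next we use $\det N_0 = (-1)^k\det(B^*A)$ (Schur complement again, with $\det(-B^*A) = (-1)^k\det(B^*A)$). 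Multiplying each term by $\det(B^*A)$ and applying the Schur complement formula once more, now to $G_S$ with respect to its lower-right block $B^*A$, gives
$$\det(B^*A)\cdot\det\Big(V_S^*W_S - V_S^*A(B^*A)^{-1}B^*W_S\Big) = \det G_S.$$
Hence $(-1)^k\det M = \sum_S (-1)^{|S|} x_S\det G_S$, as required.

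\textbf{Step 4: removing the invertibility assumption.} Both sides of the identity in Step 3 are polynomial in the entries of $A$ and $B$. Since $B^*A$ is invertible on a dense set, the identity extends to all $A$ and $B$ by continuity, or equivalently by polynomial identity. Over $\mathbb{C}$ one should watch the conjugate-linear $*$, but this is harmless because we may treat $B^*$ as an arbitrary $k\times n$ matrix.

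\textbf{Conclusion and main obstacle.} Dividing the identity of Step 3 by Step 1 proves the theorem. All determinants are taken over $\mathbb{F}(x_1,\ldots,x_r)$, where $\det(I - WXV^*) \neq 0$ because its constant term is $1$. We expect the main obstacle to be sign and Schur-complement bookkeeping in Step 3. An alternative that avoids inverses altogether is to expand $\det M$ directly by multilinearity, using column operations on the bordered $(n+r+k)$-size matrix
$$\left[\begin{matrix}I_n & W & A\\ XV^* & I_r & 0\\ B^* & 0 & 0\end{matrix}\right].$$
We would fall back on this if the genericity argument in Step 4 causes trouble.
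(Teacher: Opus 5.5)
Your argument is correct, but your main route differs from the paper's. The paper never assumes $B^*A$ invertible. It works with the $(r+k)\times(r+k)$ matrix
$$H(X) = \left[\begin{matrix} I_r - XV^*W & -XV^*A \\ B^*W & B^*A\end{matrix}\right]$$
and computes $\det H(X)$ in two ways:
\begin{enumerate}[label=(\roman*)]
\item A Schur complement with respect to $I_r - XV^*W$, which is invertible over $\mathbb{F}(x_1,\ldots,x_r)$, plus the push-through identity, gives $\det H(X) = \det(I_r - XV^*W)\det\big(B^*(I_n - WXV^*)^{-1}A\big)$.
\item Expanding multilinearly in the top $r$ rows $e_i^* - x_iu_i^*$ gives $\sum (-1)^{|S|}x_S\det G_S$ directly.
\end{enumerate}
This is exactly your fallback. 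Eliminating the $I_n$ block of your $(n+r+k)$-size matrix yields $H(X)$ with its last $k$ rows negated. Eliminating the $I_r$ block yields your $M$.

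What your main route buys is a conceptual reading of the numerator:
$$\sum (-1)^{|S|}x_S\det G_S = \det(B^*A)\cdot\det\big(I_r - XV^*(I_n - A(B^*A)^{-1}B^*)W\big).$$
That is, it is the same principal-minor expansion as the denominator, applied to $V^*W$ compressed by the oblique projection onto $\ker B^*$ along ${\rm im}\,A$. What it costs is the genericity step in Step 4.

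That step has one small hole. When $k>n$, $B^*A$ has rank at most $n<k$, so it is never invertible and there is no dense set to extend from. You need a separate remark for this case. Both sides then vanish, because $B^*(I_n-WXV^*)^{-1}A$ and every $G_S = \left[\begin{matrix} V_S^* \\ B^*\end{matrix}\right]\left[\begin{matrix} W_S & A\end{matrix}\right]$ factor through $\mathbb{F}^n$, while the denominator has constant term $1$. The paper's direct expansion of $\det H(X)$ is uniform in $k$ and needs no such case split.
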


\begin{proof}
Let $H(X)$ be the $(r + k) \times (r + k)$ matrix $$H(X) = \left[\begin{matrix} I_r - X V^* W & -X V^* A \\ B^* W & B^* A \end{matrix}\right].$$ We evaluate its determinant in two different ways. First, since $I_r - X V^* W$ evaluates to $I_r$ when $X = 0$, its determinant is nonzero and hence it is invertible over $\mathbb{F}(x_1, \dots, x_r)$. Applying the Schur complement factorization to this block yields \begin{eqnarray*}\det H(X) &=& \det(I_r - X V^* W) \det\Big( B^*A + B^*W (I_r - X V^* W)^{-1} X V^*A\Big)\\ &=& \det(I_r - X V^* W) \det\Big( B^* (I_n - W X V^*)^{-1} A \Big).\end{eqnarray*} Second, we expand $\det H(X)$ by multilinearity with respect to its top $r$ rows. For $1 \le i \le r$, the $i$-th row of $H(X)$ is $e_i^* - x_i u_i^*$, where $u_i^* = \left[\begin{matrix} v_i^* W & v_i^* A \end{matrix}\right]$. So we can decompose $\det H(X)$ into a sum, over all subsets $S$ of $\{1, \ldots, r\}$, of the determinant of the matrix formed from $H(X)$ by replacing the $i$th row, for $1 \leq i \leq r$, with $-x_i u_i^*$ when $i \in S$ and $e_i^*$ when $i \notin S$. Expanding the determinant via cofactors along the rows $i \not\in S$ effectively strikes out the $i$-th row and $i$-th column for all $i \notin S$. Because these elements lie on the main diagonal, each such expansion carries a positive sign. The remaining submatrix has dimensions $(|S| + k) \times (|S| + k)$ and corresponds to the row and column indices $S \cup \{r+1, \dots, r+k\}$. The top $|S|$ rows of this submatrix are the restrictions of $-x_i u_i^*$ for $i \in S$ to the columns $S \cup \{r+1, \dots, r+k\}$, which yields the matrix $-X_S \left[\begin{matrix} V_S^* W_S & V_S^* A \end{matrix}\right]$. The bottom $k$ rows are the restrictions of $\left[\begin{matrix} B^* W & B^* A \end{matrix}\right]$ to these same columns, yielding $\left[\begin{matrix} B^* W_S & B^* A \end{matrix}\right]$. Factoring the scalar $-x_i$ from each of the upper $|S|$ rows then extracts an overall coefficient of $\prod_{i \in S} (-x_i) = (-1)^{|S|} x_S$. The residual matrix is $G_S$. Thus summing over $S \subseteq \{1, \ldots, r\}$ yields $$ \det H(X) = \sum (-1)^{|S|} x_S \det G_S.$$

Finally, we evaluate the polynomial $\det(I_r - X V^* W)$. Applying the principal minor expansion yields $$\det(I_r - X V^* W) = \sum (-1)^{|S|} \det(M_S) = \sum x_S \det(V_S^* W_S),$$ where $M_S$ is the principal submatrix of $X V^* W$ indexed by $S$.

Equating the two expressions for $\det H(X)$ yields $$ \det(I_r - X V^* W) \det\Big( B^* (I_n - W X V^*)^{-1} A \Big) = \sum (-1)^{|S|} x_S \det G_S,$$ and the desired formula is then obtained by substituting the final expression for $\det(I_r - X V^* W)$.
\end{proof}

Let us say that $R \subseteq \mathbb{R}^n \times \mathbb{R}^n$ is {\it acyclic} if $\mathcal{V}_R^n = \{0\}$. Equivalently, every bi-path (see Section 1.4) has length at most $n$. In this case the formula in Theorem \ref{lgthm} simplifies.

\begin{cor}\label{lgcor} (Linear Lindström–Gessel–Viennot lemma)
If $R$ is acyclic then $$ \det \left( B^* \left( \sum_{j=0}^{n-1} (W X V^*)^j \right) A \right) \;\;=\;\; \sum (-1)^{|S|} x_S \det \left[\begin{matrix} V_S^* W_S & V_S^* A \\ B^* W_S & B^* A \end{matrix}\right],$$ taking the sum over all $S \subseteq \{1, \ldots, r\}$.
\end{cor}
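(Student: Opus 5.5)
The plan is to specialize Theorem \ref{lgthm} and show that acyclicity makes the denominator identically $1$ and truncates the Neumann series. Two facts are needed. First, $(WXV^*)^n = 0$ as a matrix over $\mathbb{F}(x_1,\ldots,x_r)$. Second, $\det(I_r - XV^*W) = 1$. Given these, $(I_n - WXV^*)^{-1} = \sum_{j=0}^{n-1}(WXV^*)^j$ exactly, since multiplying this finite sum by $I_n - WXV^*$ telescopes to $I_n - (WXV^*)^n = I_n$. Multiplying both sides of Theorem \ref{lgthm} by its denominator then gives the corollary.

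For the first fact, note that for any scalar substitution $x \in \mathbb{F}^r$, the matrix $WXV^* = \sum_i x_i w_i v_i^*$ lies in $\mathcal{V}_R$. Hence $(WXV^*)^n$ lies in $\mathcal{V}_R^n = \{0\}$ for every substitution. Each entry of $(WXV^*)^n$ is a polynomial in the $x_i$ that vanishes on all of $\mathbb{F}^r$, with $\mathbb{F}$ infinite, so it is the zero polynomial.

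For the second fact, I would use Sylvester's identity $\det(I_r - XV^*W) = \det(I_n - WXV^*)$; this is the same Schur-complement bookkeeping used in the proof of Theorem \ref{lgthm}. The matrix $N = WXV^*$ is nilpotent over the field $\mathbb{F}(x_1,\ldots,x_r)$ by the first fact. So $I_n - N$ is unipotent: all its eigenvalues, computed over an algebraic closure, equal $1$, and its determinant is $1$. Equivalently, the characteristic polynomial of a nilpotent matrix is $t^n$, so $\det(I - N) = 1$.

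Finally, by the computation in the proof of Theorem \ref{lgthm}, the denominator $\sum (-1)^{|S|} x_S \det(V_S^* W_S)$ equals $\det(I_r - XV^*W)$, which we have just shown is $1$. Substituting into Theorem \ref{lgthm} and replacing the inverse by the truncated series yields the stated identity.

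The main obstacle is checking the signs of that identification, since the proof of Theorem \ref{lgthm} writes $\sum(-1)^{|S|}\det(M_S)$ and its final displayed expression need not carry a consistent sign. I would avoid the issue by working only with $\det(I_r - XV^*W)$ itself, which is the quantity actually appearing before the substitution step in that proof. This gives the result without relying on the sign convention of the expanded sum.
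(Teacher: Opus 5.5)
Your proof is correct. The truncation step is the same as the paper's, but you justify it more carefully: every scalar specialization of $WXV^*$ lies in $\mathcal{V}_R$ and so is nilpotent, hence $(WXV^*)^n=0$ over $\mathbb{F}(x_1,\ldots,x_r)$. Your telescoping also gets the series with the right signs, whereas the paper's displayed $I - WXV^* + (WXV^*)^2 - \cdots$ has a sign slip.

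Where you genuinely differ is in showing that the denominator equals $1$. The paper builds a digraph on $\{1,\ldots,r\}$ with an edge $i\to j$ when $v_i\not\perp w_j$. It asserts that acyclicity of $R$ makes this digraph acyclic, then reorders the indices topologically so that $V^*W$, and hence $XV^*W$, is strictly upper triangular. You instead combine Sylvester's identity $\det(I_r - XV^*W)=\det(I_n - WXV^*)$ with the unipotence of $I_n - WXV^*$.

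Your route is shorter and needs only that each element of $\mathcal{V}_R$ be nilpotent. It also bypasses a claim the paper states without proof, that the index digraph is acyclic. That claim does hold: a directed cycle $i_1\to\cdots\to i_k\to i_1$ would make $w_{i_1}v_{i_1}^*\cdots w_{i_k}v_{i_k}^*$ a nonzero multiple of $w_{i_1}v_{i_k}^*$, whose powers are all nonzero. The paper's route exhibits explicit triangular structure that matches the path-counting picture. You lose nothing by skipping it, though: comparing coefficients of the distinct monomials $x_S$ in $\sum_S(-1)^{|S|}x_S\det(V_S^*W_S)=1$ shows $\det(V_S^*W_S)=0$ for every nonempty $S$.

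Your sign worry concerns the last display in the paper's proof of Theorem~\ref{lgthm}, which drops a factor $(-1)^{|S|}$. The principal-minor expansion does give $\det(I_r - XV^*W)=\sum_S(-1)^{|S|}x_S\det(V_S^*W_S)$, so the stated denominator is right. Working with $\det(I_r - XV^*W)$ directly, as you do, is in any case the cleanest way through.
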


\begin{proof}
By acyclicity, the infinite series $I -  WXV^* + (WXV^*)^2 - \cdots$ truncates at the $n$th power. So the left side of the equality in Theorem \ref{lgthm} becomes ${\rm det}(B^*(\sum_{j=0}^{n-1} (WXV^*)^j)A)$.

Create a directed graph on $\{1, \ldots, r\}$ by including an edge from $i$ to $j$ if $v_i \not\perp w_j$. Then acyclicity of $R$ guarantees that this graph is acyclic, so we can reorder the indices in such a way that $v_i \perp w_j$ whenever $i \geq j$. Having done this, the matrix $V^*W$ will be strictly upper triangular, and since $X$ is (even after reordering) diagonal, the matrix $XV^*W$ will also be strictly upper triangular. Therefore ${\rm det}(I_r - XV^*W) = 1$. This was the expression that evaluated to the denominator in Theorem \ref{lgthm}, so that denominator can be removed. This yields the stated result.
\end{proof}

When $R$ arises from an acyclic directed graph as in Proposition \ref{gprop}, Corollary \ref{lgcor} reduces to the classical Lindström–Gessel–Viennot lemma. To see this, let $G$ be a finite acyclic directed graph, let $k \geq 1$, and let $H$ and $K$ be sets of vertices in $G$ with $|H| = |K| = k$. Assign a weight $\omega_e$ to each edge $e$ in $G$, and for each directed path $P$ in $G$ let $\omega(P)$ be the product of the weights of the edges of the (by definition, non self-intersecting) path. For any vertices $g$ and $g'$, write $e(g, g')$ for the sum of $\omega(P)$ over all paths $P$ from $g$ to $g'$. The Lindström–Gessel–Viennot lemma evaluates the determinant of the matrix $M$ whose $(i,j)$ entry is $e(g_i, g_j')$ where $H = \{g_1, \ldots, g_k\}$ and $K = \{g_1', \ldots, g_k'\}$. It states that $${\rm det}(M) = \sum {\rm sign}(\sigma) \prod_{i=1}^k \omega(P_i)$$ where the sum is taken over all $k$-tuples of vertex disjoint paths $P_i$ from $H$ to $K$, and $\sigma$ is the permutation of $\{1, \ldots, k\}$ such that the path that begins at $g_i$ ends at $g_{\sigma(i)}'$.

As in Proposition \ref{gprop}, we consider $\mathbb{F}^n$ where $n = |G|$, with the standard basis vectors of $\mathbb{F}^n$ corresponding to the vertices of $G$. The vectors $a_i$ and $b_i$ are taken to be the elements of the standard basis that correspond to the vertices in $H$ and $K$, and each edge of $G$ contributes a pair of standard basis vectors $(v,w)$ to $R$, which correspond to the tail and head of that edge. The matrix $X = {\rm diag}(x_1, \ldots, x_r)$ represents the weights of the edges, and the matrices $A$ and $B$ are orthogonal projections onto the spans of the $a_i$'s and the $b_i$'s. Thus the left side of the equality in Corollary \ref{lgcor} is just the determinant of the matrix $M$, with $(WXV^*)^j$ contributing, in the entry corresponding to $g_i$ and $g_j'$, the sum of $\omega(P)$ over all paths of length $j$ linking $g_i$ to $g_j'$. The kind of reasoning used in the proof of Proposition \ref{gprop} shows that the determinant of the 0-1 matrix on the right side of the equality in Corollary \ref{lgcor} is zero unless $S$ consists of the edges in a $k$-tuple of vertex-disjoint paths from $H$ to $K$, in which case the determinant equals $(-1)^{|S|}\cdot {\rm sign}(\sigma)$, and $x_S$ recovers $\prod_{i=1}^k \omega(P_i)$. Thus Corollary \ref{lgcor} reduces to the Lindström–Gessel–Viennot lemma in this setting.

\section{The matrix setting}

\subsection{Noncommutative rank}

Now let $\mathcal{V}$ be any linear subspace of $M_{m,n} = M_{m,n}(\mathbb{F})$, with $\mathbb{F} = \mathbb{R}$ or $\mathbb{C}$, and recall the notation $\mathcal{V}[E] = {\rm span}\{Av: A \in \mathcal{V}, v \in E\}$ for $E$ a linear subspace of $\mathbb{F}^n$. According to Corollary \ref{qhmtl}, if $\mathcal{V}$ is generated by rank one matrices then the {\it no shrunk subspaces} condition, $${\rm dim}(\mathcal{V}[E]) \geq {\rm dim}(E)$$ for all linear subspaces $E$ of $\mathbb{F}^n$, ensures that $\mathcal{V}$ contains a rank $n$ matrix. However, as noted by Lovász \cite{lovasz}, this implication is not valid when one drops the assumption of rank one generation: for any $n > 1$ the space of $n\times n$ skew-symmetric matrices has no shrunk subspaces, but if $n$ is odd then every $n \times n$ skew-symmetric matrix has null determinant, i.e., is not invertible. Hence every such matrix has rank at most $n - 1$. Thus the naive generalization of the linear marriage theorem to arbitrary matrix spaces fails.

The relevant tool that is needed here is the notion of {\it noncommutative rank}, which we can define as $$\ncrank(\mathcal{V}) = \sup_{r \geq 1} \frac{1}{r} \cdot {\rm rank}(\mathcal{V} \otimes M_r)$$ where $\mathcal{V}\otimes M_r \subseteq M_{m,n}\otimes M_r \cong M_{mr,nr}$ and we define ${\rm rank}(\mathcal{W}) = \max\{{\rm rank}(A): A \in \mathcal{W}\}$ for any linear space of matrices $\mathcal{W}$. See the introduction to \cite{IQS} for several equivalent formulations of noncommutative rank.

For any $r \geq 1$, the quantity ${\rm rank}(\mathcal{V} \otimes M_r)$ is divisible by $r$ \cite[Lemma 5.6]{IQS0}. The quantity $\frac{1}{r}\cdot {\rm rank}(\mathcal{V} \otimes M_r)$ is nondecreasing for $r \geq \frac{n}{2} - 1$, but this can fail for smaller values of $r$ \cite[Propositions 1.12 and 1.13]{DM1}; it is constant and achieves its maximum for $r \geq n -1$ \cite[Theorem 1.8]{DM}. The noncommutative rank of $\mathcal{V}$ exactly equals $n - d$ where $d$ is the maximum of ${\rm dim}(E) - {\rm dim}(\mathcal{V}[E])$ over all linear subspaces $E$ of $\mathbb{F}^n$ \cite[Theorem 1]{FR}. Thus we obtain the following matrix version of Lovász's theorem (Corollary \ref{locor} above):

\begin{thm}\label{ncr}
Let $m,n \geq 1$, let $\mathcal{V}$ be a linear subspace of $M_{m,n}$, and let $$d = {\rm max}\{{\rm dim}(E) - {\rm dim}(\mathcal{V}[E]): E\mbox{ is a linear subspace of }\mathbb{F}^n\}.$$ Then $$\frac{1}{r}\cdot {\rm rank}(\mathcal{V}\otimes M_r) = n - d$$ for all $r \geq n-1$.
\end{thm}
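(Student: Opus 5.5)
The plan is to assemble Theorem \ref{ncr} from the three cited facts about noncommutative rank. I will not reprove them from scratch. The point of view is that Theorem \ref{ncr} is the matrix analog of Corollary \ref{locor}, with $\mathcal{V}\otimes M_r$ playing the role of a rank one generated space.

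\textbf{Step 1 (upper bound).} First I show directly that $\frac{1}{r}\,{\rm rank}(\mathcal{V}\otimes M_r) \leq n-d$ for every $r \geq 1$, mimicking the first half of the proof of Corollary \ref{locor}.

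Let $E$ attain the maximum defining $d$. Then $\mathcal{V}\otimes M_r$ maps $E\otimes\mathbb{F}^r$ into $\mathcal{V}[E]\otimes\mathbb{F}^r$. Hence any $A\in\mathcal{V}\otimes M_r$ has a kernel of dimension at least $r({\rm dim}(E)-{\rm dim}(\mathcal{V}[E])) = rd$, so ${\rm rank}(A)\leq r(n-d)$.

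This step is elementary and gives $\ncrank(\mathcal{V})\leq n-d$.

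\textbf{Step 2 (lower bound).} Next I invoke \cite[Theorem 1]{FR}, which gives $\ncrank(\mathcal{V}) = n-d$. By definition of $\ncrank$ as a supremum, the sequence $\frac{1}{r}{\rm rank}(\mathcal{V}\otimes M_r)$ therefore has supremum exactly $n-d$.

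The values lie in $\frac{1}{r}\mathbb{Z}$. Indeed, by \cite[Lemma 5.6]{IQS0} they are integers, so they lie in a finite set $\{0,\ldots,n\}$ and the supremum is attained at some $r_0$.

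\textbf{Step 3 (every $r\geq n-1$).} By \cite[Theorem 1.8]{DM}, the quantity is constant for $r\geq n-1$ and equals its maximum there. Since the maximum is $n-d$ by Step 2, we get $\frac{1}{r}{\rm rank}(\mathcal{V}\otimes M_r)=n-d$ for all $r\geq n-1$, as claimed.

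\textbf{Main obstacle.} The real content is the lower bound hidden in \cite{FR} and \cite{DM}: producing, for $r = n-1$, an element of $\mathcal{V}\otimes M_r$ of rank $r(n-d)$. If one wanted a self-contained argument, I would try a blow-up/regularity approach in the style of Derksen–Makam: starting from a matrix $A$ in $\mathcal{V}\otimes M_r$ of rank $rk$ with $k<n-d$, use the absence of a shrunk subspace of defect $n-k$ (a Wong-sequence construction) to build an element of $\mathcal{V}\otimes M_{r'}$ of rank $> r'k$, while controlling that $r'$ never needs to exceed $n-1$.

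That bound control is the delicate part. Since the paper states these results as known, I would cite them rather than redo this.
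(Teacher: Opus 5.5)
Your proposal is correct and follows the same route as the paper, which likewise obtains the theorem by combining \cite[Theorem 1]{FR} ($\ncrank(\mathcal{V}) = n-d$) with \cite[Theorem 1.8]{DM} (the quantity $\frac{1}{r}\,{\rm rank}(\mathcal{V}\otimes M_r)$ is constant and maximal for $r \geq n-1$). Your elementary upper bound in Step 1 and the attainment remark via \cite[Lemma 5.6]{IQS0} are not needed once those two results are cited, but they do no harm.
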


I cannot find an explicit statement of this exact result in the literature, but it is obviously implied by Theorem 1.5/Remark 1.6 of \cite{IQS}, for example.

\subsection{Theorems of Hall,  Kőnig, and Dilworth}

Matrix versions of the theorems of Hall, Kőnig, and Dilworth are easy consequences of Theorem \ref{ncr}.

\begin{thm} (Matrix marriage theorem)
Let $m \geq n \geq 1$ and let $\mathcal{V}$ be a linear subspace of $M_{m,n}$. Then ${\rm dim}(\mathcal{V}[E]) \geq {\rm dim}(E)$ for every linear subspace $E$ of $\mathbb{F}^n$ if and only if there is a rank $rn$ matrix in $\mathcal{V} \otimes M_r$ for $r = n-1$. The same equivalence holds for any $r \geq n - 1$.
\end{thm}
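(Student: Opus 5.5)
The plan is to derive this directly from Theorem \ref{ncr}, reading the no shrunk subspaces condition as the statement that the defect $d$ appearing there is zero. Let
$$d = \max\{{\rm dim}(E) - {\rm dim}(\mathcal{V}[E]): E \mbox{ a linear subspace of } \mathbb{F}^n\}.$$
Taking $E = \{0\}$ gives $d \geq 0$. Hence ${\rm dim}(\mathcal{V}[E]) \geq {\rm dim}(E)$ for all $E$ holds if and only if $d = 0$.

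Next, for any $r \geq n-1$, Theorem \ref{ncr} gives ${\rm rank}(\mathcal{V}\otimes M_r) = r(n-d)$. So $d = 0$ holds exactly when ${\rm rank}(\mathcal{V}\otimes M_r) = rn$. Recall that ${\rm rank}(\mathcal{W})$ denotes the maximum rank of a matrix in $\mathcal{W}$. Since $\mathcal{V}\otimes M_r \subseteq M_{mr,nr}$ and $m \geq n$, the value $rn$ is the largest rank any matrix there can have. Therefore ${\rm rank}(\mathcal{V}\otimes M_r) = rn$ is the same as the existence of a rank $rn$ matrix in $\mathcal{V}\otimes M_r$. Chaining these equivalences gives the claim for every $r \geq n-1$, and in particular for $r = n-1$.

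One degenerate case needs a check: when $n = 1$ we get $r = n-1 = 0$, which falls outside the range $r \geq 1$ used in defining noncommutative rank. I would handle this either by reading the statement with $r = \max(n-1,1)$, or by verifying $n=1$ directly. In that case the condition applied to $E = \mathbb{F}$ says exactly that $\mathcal{V} \neq \{0\}$. That holds iff $\mathcal{V}$ contains a nonzero column, i.e.\ a rank one ($= rn$ with $r=1$) matrix, and similarly for larger $r$, since $A \otimes I_r$ has rank $r$.

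As an independent sanity check on the forward implication, one can argue directly. Suppose $M \in \mathcal{V}\otimes M_r$ has rank $rn$, so it is injective. Then for any $E$ we have $M(E\otimes\mathbb{F}^r) \subseteq \mathcal{V}[E]\otimes\mathbb{F}^r$, which gives $r\,{\rm dim}(E) \leq r\,{\rm dim}(\mathcal{V}[E])$. The real content, the reverse implication, rests entirely on Theorem \ref{ncr}, which was taken from the literature. So I expect no genuine obstacle beyond correctly identifying maximum rank with the quantity in that theorem, and the $n=1$ edge case.
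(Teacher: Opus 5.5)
Your proposal is correct and follows the paper's proof. The paper also gets the reverse implication from Theorem \ref{ncr} with $d=0$, and proves the forward implication by the argument you give as a ``sanity check'': an injective $M$ maps $E\otimes\mathbb{F}^r$ into $\mathcal{V}[E]\otimes\mathbb{F}^r$, and that argument works for every $r \geq 1$. Your remark about the degenerate case $n=1$, $r=0$ is a fair point that the paper passes over, and reading the statement with $r \geq \max(n-1,1)$ is the right fix.
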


\begin{proof}
Suppose there is a linear subspace $E$ of $\mathbb{F}^n$ with ${\rm dim}(\mathcal{V}[E]) < {\rm dim}(E)$. Then for any $r \geq 1$ $$(\mathcal{V}\otimes M_r)[E\otimes \mathbb{F}^r] = \mathcal{V}[E]\otimes \mathbb{F}^r$$ so $${\rm dim}((\mathcal{V}\otimes M_r)[E\otimes \mathbb{F}^r]) = r\cdot {\rm dim}(\mathcal{V}[E]) < r\cdot {\rm dim}(E) = {\rm dim}(E\otimes \mathbb{F}^r).$$ In particular, for any $A \in \mathcal{V}\otimes M_r$ we have $${\rm dim}(A(E\otimes \mathbb{F}^r)) < {\rm dim}(E\otimes \mathbb{F}^r),$$ showing that every matrix in $\mathcal{V}\otimes M_r$ has a nonzero kernel. So $\mathcal{V}\otimes M_r$ cannot contain any matrix of rank $rn$, for any $r \geq 1$.

Conversely, if ${\rm dim}(\mathcal{V}[E]) \geq {\rm dim}(E)$ for every $E$ then Theorem \ref{ncr} implies that there is a rank $rn$ matrix in $\mathcal{V}\otimes M_r$ for every $r \geq n-1$.
\end{proof}

In particular, for any linear subspace $\mathcal{V}$ of $M_n$, if there is an invertible matrix in $\mathcal{V}\otimes M_r$ for some $r \geq n - 1$ then there is an invertible matrix in $\mathcal{V}\otimes M_r$ for all $r \geq n - 1$, and this happens if and only if ${\rm dim}(\mathcal{V}[E]) \geq {\rm dim}(E)$ for every linear subspace $E$ of $\mathbb{F}^n$.

Let a {\it cover} for $\mathcal{V} \subseteq M_{m,n}$ be a pair $(E,F)$ of linear subspaces of $\mathbb{F}^n$ and $\mathbb{F}^m$ such that $\mathcal{V}[E^\perp] \subseteq F$. Equivalently, $Q\mathcal{V}P = \{0\}$ where $P$ and $Q$ are the orthogonal projections onto $E^\perp$ and $F^\perp$. Its {\it size} is the value ${\rm dim}(E) + {\rm dim}(F)$.

\begin{thm}\label{mkt} (Matrix Kőnig's theorem)
Let $m,n \geq 1$ and let $\mathcal{V}$ be a linear subspace of $M_{m,n}$. Then, for any $r \geq n - 1$, the maximum rank of a matrix in $\mathcal{V} \otimes M_r$ is $r$ times the minimum size of a cover for $\mathcal{V}$.
\end{thm}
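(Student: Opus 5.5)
The plan is to reduce everything to Theorem \ref{ncr}. That result already gives, for every $r \geq n-1$,
$$\max\{{\rm rank}(A): A \in \mathcal{V}\otimes M_r\} = r(n-d),$$
where $d = \max_G\{{\rm dim}(G) - {\rm dim}(\mathcal{V}[G])\}$ and $G$ ranges over linear subspaces of $\mathbb{F}^n$. So it suffices to prove the purely linear-algebraic identity
$$\min\{{\rm dim}(E) + {\rm dim}(F): (E,F)\mbox{ a cover for }\mathcal{V}\} = n - d.$$
This is the matrix analogue of the observation used in Theorem \ref{qkt}, where minimal covers were built from subspaces and their neighborhoods.

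For the inequality ``$\leq$'', take a subspace $G$ of $\mathbb{F}^n$ attaining the maximum in the definition of $d$. Set $E = G^\perp$ and $F = \mathcal{V}[G]$. Then $\mathcal{V}[E^\perp] = \mathcal{V}[G] = F$, so $(E,F)$ is a cover. Its size is
$$(n - {\rm dim}(G)) + {\rm dim}(\mathcal{V}[G]) = n - d.$$

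For the inequality ``$\geq$'', let $(E,F)$ be any cover. The covering condition $\mathcal{V}[E^\perp] \subseteq F$ gives ${\rm dim}(F) \geq {\rm dim}(\mathcal{V}[E^\perp])$. Applying the definition of $d$ with $G = E^\perp$ gives
$${\rm dim}(E) + {\rm dim}(F) \geq n - {\rm dim}(E^\perp) + {\rm dim}(\mathcal{V}[E^\perp]) \geq n - d.$$
Combining the two inequalities, the minimum cover size is $n-d$. Multiplying by $r$ and invoking Theorem \ref{ncr} then gives the statement for all $r \geq n-1$.

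There is no real obstacle. All the depth sits in Theorem \ref{ncr}, that is, in the Fortin--Reutenauer characterization of noncommutative rank together with the stabilization at $r \geq n-1$. The only point needing care is that the duality between subspaces $G$ and covers $(G^\perp, \mathcal{V}[G])$ is exactly size-preserving, and the display above confirms this.
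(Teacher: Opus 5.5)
Your proof is correct and is essentially the paper's argument: both combine Theorem \ref{ncr} with the cover $(G^\perp, \mathcal{V}[G])$ built from a subspace $G$ to show that the minimum cover size is at most $n-d$. The only difference is in the upper bound on rank. The paper obtains ${\rm rank}(A) \leq r({\rm dim}(E)+{\rm dim}(F))$ directly by noting that $(E\otimes\mathbb{F}^r, F\otimes\mathbb{F}^r)$ is a cover for $\mathcal{V}\otimes M_r$, which holds for every $r \geq 1$. You instead route this direction through the equality in Theorem \ref{ncr}, after observing via $G = E^\perp$ that every cover has size at least $n-d$.
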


\begin{proof}
For any $r \geq 1$, if $(E,F)$ is a cover for $\mathcal{V}$ then $(E\otimes \mathbb{F}^r, F\otimes \mathbb{F}^r)$ is a cover for $\mathcal{V}\otimes M_r$. That is, every $A \in \mathcal{V}\otimes M_r$ satisfies $$A((E\otimes \mathbb{F}^r)^\perp) = A(E^\perp\otimes \mathbb{F}^r) \subseteq F\otimes \mathbb{F}^r,$$ so that $${\rm rank}(A) \leq {\rm dim}(E\otimes \mathbb{F}^r) + {\rm dim}(F\otimes \mathbb{F}^r) = r\cdot ({\rm dim}(E) + {\rm dim}(F)).$$ Thus every matrix in $\mathcal{V} \otimes M_r$ has rank at most $r$ times the minimum size of a cover for $\mathcal{V}$, for any $r \geq 1$.

Conversely, let $(E_0,F_0)$ be a cover of minimum size and let $d_0 = n - ({\rm dim}(E_0) + {\rm dim}(F_0))$, so that $n - d_0$ is the minimum size of a cover. I claim that ${\rm dim}(\mathcal{V}[E]) \geq {\rm dim}(E) - d_0$ for every linear subspace $E$ of $\mathbb{F}^n$. If this were to fail for some $E$, then $(E^\perp, \mathcal{V}[E])$ would be a cover of size strictly less than $$(n - {\rm dim}(E)) + ({\rm dim}(E) - d_0) = n - d_0,$$ a contradiction. This proves the claim, and it follows that $$d_0 \geq d = \max\{{\rm dim}(E) - {\rm dim}(\mathcal{V}[E]): E\mbox{ is a linear subspace of }\mathbb{F}^n\},$$ so that $n - d_0$, the minimum size of a cover, is less than or equal to $n - d$. It now follows from Theorem \ref{ncr} that for any $r \geq n - 1$ there is a matrix in $\mathcal{V} \otimes M_r$ whose rank is at least $r$ times the minimum size of a cover.
\end{proof}

Turning to Dilworth's theorem, I do not have any matrix version of Theorem \ref{qdt}. Theorem \ref{cqdt} readily adapts to the matrix setting, however. 

A linear subspace $\mathcal{V} \subseteq M_n$ is a {\it nilpotent algebra} if it satisfies $\mathcal{V}^2 \subseteq \mathcal{V}$ and $\mathcal{V}^n = \{0\}$. We use the same definition of coherent decompositions into chains as before: a {\it coherent chain in $\mathbb{F}^n$ relative to $\mathcal{V}$} is a sequence $(v, Av, \ldots, A^{k-1}v)$ for some $v \in \mathbb{F}^n$ and some $A \in \mathcal{V}$, and a {\it coherent decomposition of $\mathbb{F}^n$ into chains relative to $\mathcal{V}$} is a collection of chains which are all implemented by one single $A \in \mathcal{V}$ and whose elements collectively form a basis of $\mathbb{F}^n$. Its {\it size} is the number of coherent chains in the family. Our definition of an {\it antichain} $C$ in Section 1.3 was made relative to a relation $R$; in terms of $\mathcal{V}$, the appropriate condition is that $C$ should be a linear subspace of $\mathbb{F}^n$ such that $\mathcal{V}[C] \subseteq C^\perp$. Equivalently, $P\mathcal{V}P = \{0\}$ where $P$ is the orthogonal projection onto $C$.

\begin{lem}\label{dillem2}
Let $n \geq 1$ and let $\mathcal{V} \subseteq M_n$ be a nilpotent algebra. Then the maximum dimension of an antichain for $\mathcal{V}$ equals $n$ minus the minimum size of a cover for $\mathcal{V}$.
\end{lem}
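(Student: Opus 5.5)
The plan is to mirror the proof of Lemma \ref{dillem}, replacing the relation $R$ by the algebra $\mathcal{V}$ throughout. We prove two inequalities: one by turning a minimum cover into an antichain, and one by turning a maximum antichain into a cover.

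For the first inequality, let $(E,F)$ be a cover of minimum size, so $\mathcal{V}[E^\perp] \subseteq F$. Put $C = (E+F)^\perp = E^\perp \cap F^\perp$. Since $C \subseteq E^\perp$, we get $\mathcal{V}[C] \subseteq F \subseteq C^\perp$, so $C$ is an antichain. The computation ${\rm dim}(E)+{\rm dim}(F) = {\rm dim}(E+F) + {\rm dim}(E\cap F) \geq n - {\rm dim}(C)$ then shows that the maximum antichain dimension is at least $n$ minus the minimum cover size. This step uses neither nilpotence nor the algebra property.

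For the reverse inequality, let $C$ be an antichain and set $F = \mathcal{V}[C]$ and $E = (C+F)^\perp$. The antichain condition gives $F \perp C$, so ${\rm dim}(C+F) = {\rm dim}(C) + {\rm dim}(F)$ and ${\rm dim}(E)+{\rm dim}(F) = n - {\rm dim}(C)$. It remains to check that $(E,F)$ is a cover, i.e.\ that $\mathcal{V}[E^\perp] \subseteq F$, where $E^\perp = C + F$. On the $C$ part this holds by the definition of $F$. On the $F$ part we need $\mathcal{V}[\mathcal{V}[C]] \subseteq \mathcal{V}[C]$. This follows from $\mathcal{V}^2 \subseteq \mathcal{V}$: the space $\mathcal{V}[\mathcal{V}[C]]$ is spanned by vectors $ABv$ with $A, B \in \mathcal{V}$ and $v \in C$, and $AB \in \mathcal{V}$. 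This is the matrix analogue of the transitivity condition (1) used in Lemma \ref{dillem}. Hence $(E,F)$ is a cover of size $n - {\rm dim}(C)$, and taking $C$ of maximum dimension gives the other inequality.

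The only step that needs care is the covering verification, and specifically that closure under multiplication is exactly what makes $\mathcal{V}[E^\perp] \subseteq F$ hold; it is the one place the proof uses the algebra hypothesis. The condition $\mathcal{V}^n = \{0\}$ is not needed for this lemma. It plays the role of antisymmetry and is presumably used later, when Jordan forms of nilpotent matrices enter the argument.
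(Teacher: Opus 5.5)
Your proof is correct and follows the paper's argument essentially verbatim. For the first direction you turn a minimum cover into the antichain $C=(E+F)^\perp$, and for the second you turn an antichain into the cover $F=\mathcal{V}[C]$, $E=(C+F)^\perp$, with $\mathcal{V}^2\subseteq\mathcal{V}$ giving $\mathcal{V}[\mathcal{V}[C]]\subseteq\mathcal{V}[C]$. You are also right that $\mathcal{V}^n=\{0\}$ is never used: the paper's proof likewise relies only on closure under multiplication.
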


\begin{proof}
The proof is essentially the same as the proof of Lemma \ref{dillem}, rewritten in terms of $\mathcal{V}$ rather than $R$. The maximum dimension of an antichain is at least $n$ minus the minimum size of a cover by exactly the same reasoning as before. Conversely, given any antichain $C$, let $E = (C + \mathcal{V}[C])^\perp$ and $F = \mathcal{V}[C]$. Since $C$ is an antichain, $\mathcal{V}[C] \subseteq C^\perp$ and so ${\rm dim}(C + \mathcal{V}[C]) = {\rm dim}(C) + {\rm dim}(\mathcal{V}[C])$. Therefore \begin{eqnarray*}
{\rm dim}(E) + {\rm dim}(F) &=& (n - {\rm dim}(C) - {\rm dim}(\mathcal{V}[C])) + {\rm dim}(\mathcal{V}[C])\\ &=& n - {\rm dim}(C).
\end{eqnarray*} And using the fact that $\mathcal{V}$ is an algebra, we have $\mathcal{V}[\mathcal{V}[C]] \subseteq \mathcal{V}[C]$ and hence $$\mathcal{V}[E^\perp] = \mathcal{V}(C + \mathcal{V}[C]) = \mathcal{V}[C] + \mathcal{V}[\mathcal{V}[C]] = F,$$ showing that $(E,F)$ is a cover. Thus the maximum dimension of an antichain exactly equals $n$ minus the size of some cover.
\end{proof}

\begin{thm} (Matrix Dilworth's theorem)
Let $n \geq 1$ and let $\mathcal{V} \subseteq M_n$ be a nilpotent algebra. Then for any $r \geq n - 1$, the minimum size of a coherent decomposition of $\mathbb{F}^{rn}$ into chains relative to $\mathcal{V} \otimes M_r$ equals $r$ times the maximum dimension of an antichain for $\mathcal{V}$.
\end{thm}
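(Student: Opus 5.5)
We follow the proof of Theorem \ref{cqdt}, replacing the linear Kőnig theorem by the matrix Kőnig theorem (Theorem \ref{mkt}) and Lemma \ref{dillem} by Lemma \ref{dillem2}. Fix $r \geq n-1$ and let $a$ be the maximum dimension of an antichain for $\mathcal{V}$. By Lemma \ref{dillem2}, the minimum size of a cover for $\mathcal{V}$ is $n - a$. By Theorem \ref{mkt}, the maximum rank $s$ of a matrix in $\mathcal{V}\otimes M_r$ is therefore $s = r(n-a)$. We will show that the minimum size of a coherent decomposition of $\mathbb{F}^{rn}$ relative to $\mathcal{V}\otimes M_r$ equals $rn - s = ra$.

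We first check that $\mathcal{V}\otimes M_r$ is a nilpotent algebra in $M_{rn}$. It is closed under products because $(A\otimes B)(A'\otimes B') = AA'\otimes BB'$ and $\mathcal{V}^2\subseteq\mathcal{V}$. Moreover $(\mathcal{V}\otimes M_r)^n = \mathcal{V}^n\otimes M_r = \{0\}$, so every element is nilpotent. Note that nilpotency of each element is all the Jordan form argument needs; the exponent $n$ rather than $rn$ is harmless.

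For the upper bound, take $A\in\mathcal{V}\otimes M_r$ of rank $s$ and put it in Jordan form. All eigenvalues are zero, so each Jordan block is a single nilpotent block contributing exactly one dimension to $\ker(A)$. There are thus $rn - s$ blocks, and the cyclic vectors $v, Av, \ldots, A^{k-1}v$ of the blocks give a coherent decomposition of $\mathbb{F}^{rn}$ into $rn - s = ra$ chains, all implemented by $A$.

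For the lower bound, take any coherent decomposition into $c$ chains implemented by some $B\in\mathcal{V}\otimes M_r$, and pass to the quotient $\mathbb{F}^{rn}/{\rm im}(B)$. Every element of a chain other than its first term lies in ${\rm im}(B)$. Since the elements of all the chains form a basis, the images of the $c$ initial vectors span the quotient. Hence $rn - {\rm rank}(B)\leq c$, and as ${\rm rank}(B)\leq s$ we get $c\geq rn - s = ra$.

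The only substantive input is the identity $s = r(n-a)$, which comes from Theorem \ref{mkt} (and hence from Theorem \ref{ncr}) together with Lemma \ref{dillem2}. Everything else is the Jordan-form argument of Theorem \ref{cqdt} applied verbatim in $M_{rn}$, so I expect no real obstacle beyond checking the nilpotent algebra property above.
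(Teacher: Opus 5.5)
Your proposal is correct and is essentially the paper's proof: Theorem \ref{mkt} combined with Lemma \ref{dillem2} gives maximum rank $s = r(n-a)$ in $\mathcal{V}\otimes M_r$, and then the Jordan-form and quotient argument of Theorem \ref{cqdt} is rerun in $M_{rn}$. Your explicit check that $\mathcal{V}\otimes M_r$ is a nilpotent algebra, so that every element is nilpotent, supplies a detail the paper leaves implicit.
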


\begin{proof}
Fix $r \geq n - 1$. It follows from Theorem \ref{mkt} plus Lemma \ref{dillem2} that the maximum rank of a matrix in $\mathcal{V}\otimes M_r$ is $r(n - c)$ where $c$ is the maximum dimension of an antichain for $\mathcal{V}$. We must therefore show that the minimum size of a coherent decomposition of $\mathbb{F}^{rn}$ into chains relative to $\mathcal{V} \otimes M_r$ equals $rn$ minus the maximum rank $s$ of a matrix in $\mathcal{V}\otimes M_r$. But this equality holds by exactly the same argument used to prove the analogous statement in the proof of Theorem \ref{cqdt}.
\end{proof}

I previously gave a matrix version of Dilworth's theorem in \cite[Theorem 6.8]{weaver}. However, that result was unsatisfying because it was only an inequality (minimum size of a partition into chains $\leq$ maximum size of an antichain). This is essentially the same issue we discussed in Section 1.3 where two $w$-chains spanned the entire vector space despite the existence of an antichain of size 3.

\subsection{Menger's theorem}

Let $\mathcal{V}$ be a linear subspace of $M_n$ and let $E$ and $F$ be linear subspaces of $\mathbb{F}^n$. Then an {\it $(E,F)$-separator} (relative to $\mathcal{V}$) is a pair of linear subspaces $\tilde{E}$ and $\tilde{F}$ of $\mathbb{F}^n$ with the properties that (1) $E \subseteq \tilde{E}$ and $F \subseteq \tilde{F}$, (2) $\tilde{F}^\perp \subseteq \tilde{E}$, and (3) $\mathcal{V}[\tilde{F}^\perp] \subseteq  \tilde{E}$. The {\it size} of the separator is ${\rm dim}(\tilde{E} \cap \tilde{F})$.

Let $\iota: E \to \mathbb{F}^n$ be the natural inclusion and let $\pi: \mathbb{F}^n \to F$ be the orthogonal projection. We define the {\it matricial path capacity between $E$ and $F$ relative to $\mathcal{V}$} to be $$MPC_{\mathcal{V}}(E,F) = \ncrank(\mathcal{W}) - n$$ where $\mathcal{W}$ is the space of linear maps from $\mathbb{F}^n \oplus E$ to $\mathbb{F}^n \oplus F$ spanned by the single matrix $\left[\begin{matrix}I&\iota\\ \pi&0\end{matrix}\right]$ and the matrices $\left[\begin{matrix}A&0\\0&0\end{matrix}\right]$ for $A \in \mathcal{V}$.

\begin{thm}\label{mmt} (Matrix Menger's theorem)
Let $n \geq 1$, let $\mathcal{V}$ be a linear subspace of $M_n$, and let $E$ and $F$ be linear subspaces of $\mathbb{F}^n$. Then the matricial path capacity between $E$ and $F$ relative to $\mathcal{V}$ equals the minimum size of an $(E,F)$-separator.
\end{thm}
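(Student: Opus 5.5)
The plan is to use the Fortin–Reutenauer characterization quoted before Theorem \ref{ncr}, applied to $\mathcal{W}$. It says that $\ncrank(\mathcal{W}) = (n + {\rm dim}(E)) - d$, where $d$ is the maximum of ${\rm dim}(U) - {\rm dim}(\mathcal{W}[U])$ over subspaces $U \subseteq \mathbb{F}^n \oplus E$. Equivalently, as in the proof of Theorem \ref{mkt}, $\ncrank(\mathcal{W})$ is the minimum size of a cover for $\mathcal{W}$. So it suffices to show that the minimum size of a cover $(U^\perp, Z)$ for $\mathcal{W}$, i.e. with $\mathcal{W}[U] \subseteq Z$, equals $n$ plus the minimum size of an $(E,F)$-separator. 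This parallels the upper bound in Theorem \ref{qmt}, where $\left[\begin{smallmatrix}I-A&\iota\\ \pi&0\end{smallmatrix}\right]$ maps $\tilde F^\perp \oplus E$ into $\tilde E \oplus 0$.

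\textbf{Separator to cover.} Given a separator $(\tilde E, \tilde F)$, set $U = \tilde F^\perp \oplus E$ and $Z = \tilde E \oplus 0$. For $(x, e) \in U$ the generator $\left[\begin{smallmatrix}I&\iota\\ \pi&0\end{smallmatrix}\right]$ sends it to $(x + e, \pi x)$. Here $x \in \tilde F^\perp \subseteq \tilde E$, $e \in E \subseteq \tilde E$, and $\pi x = 0$ because $F \subseteq \tilde F$ gives $\tilde F^\perp \perp F$. The generators $A \oplus 0$ send $U$ into $\mathcal{V}[\tilde F^\perp] \oplus 0 \subseteq \tilde E \oplus 0$. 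So $\mathcal{W}[U] \subseteq Z$, and the cover has size $(n + {\rm dim} E) - (n - {\rm dim}\tilde F + {\rm dim} E) + {\rm dim}\tilde E = {\rm dim}\tilde E + {\rm dim}\tilde F = n + {\rm dim}(\tilde E \cap \tilde F)$, using $\tilde E + \tilde F = \mathbb{F}^n$. Hence $MPC \leq$ the minimum separator size.

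\textbf{Cover to separator; this is the main step.} Take $U$ maximizing ${\rm dim}\,U - {\rm dim}\,\mathcal{W}[U]$, and put $Z = \mathcal{W}[U]$. Let $X \subseteq \mathbb{F}^n$ be the projection of $U$ onto the first coordinate. I will try to replace $U$ by $U' = X \oplus E$ and show that this does not decrease the defect. Write $K = U \cap (0 \oplus E)$. Then ${\rm dim}\,U = {\rm dim} X + {\rm dim} K$, and passing to $U'$ raises the dimension by ${\rm dim} E - {\rm dim} K$. Since $(0,e) \mapsto (e,0)$ under the first generator, it should raise $\mathcal{W}[U]$ by at most the same amount. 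This requires a careful comparison, which I expect to be the most delicate part.

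Next I will replace $Z$ by the smallest subspace of the form $\tilde E \oplus 0$. The vectors $(x + e, \pi x)$ have second coordinate $\pi x$, so keeping the image inside $\mathbb{F}^n \oplus 0$ forces passing to $X \cap F^\perp$. Define $\tilde F = X^\perp + F$, so that $\tilde F^\perp = X \cap F^\perp$, and $\tilde E = \tilde F^\perp + E + \mathcal{V}[\tilde F^\perp]$. I expect the dimension accounting to work as follows. Dropping the part of $X$ that projects injectively onto $\pi(X)$ costs ${\rm dim}\,\pi(X)$ dimensions of $U$ and saves at least ${\rm dim}\,\pi(X)$ dimensions of $Z$, coming from the second coordinate. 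So the defect is preserved.

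The resulting pair satisfies conditions (1)–(3) by construction, since $E \subseteq \tilde E$, $F \subseteq \tilde F$, $\tilde F^\perp \subseteq \tilde E$, and $\mathcal{V}[\tilde F^\perp] \subseteq \tilde E$. By the size computation above, its size is at most $\ncrank(\mathcal{W}) - n$. Combining the two directions gives the equality. The reduction to $U = \tilde F^\perp \oplus E$ and $Z \subseteq \mathbb{F}^n \oplus 0$ without loss in defect is where I expect the real work to lie.
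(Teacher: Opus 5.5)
Your proposal is correct and follows the paper's proof essentially step for step. It uses the same reduction via Theorem \ref{mkt} to covers of $\mathcal{W}$, the same cover $U=\tilde F^\perp\oplus E$, $Z=\tilde E\oplus 0$ built from a separator, and the same two normalizations, $U\mapsto U+(0\oplus E)=X\oplus E$ followed by $X\mapsto X\cap F^\perp$ with the image confined to $\mathbb{F}^n\oplus 0$; your $\tilde E=\tilde F^\perp+E+\mathcal{V}[\tilde F^\perp]$ is simply the minimal version of the paper's $D_1$. The step you flagged as delicate is immediate: $\mathcal{W}[0\oplus E]=E\oplus 0$, and $\left[\begin{smallmatrix}I&\iota\\ \pi&0\end{smallmatrix}\right]$ maps $U\cap(0\oplus E)$ injectively into $\mathcal{W}[U]\cap(E\oplus 0)$, so $\dim\mathcal{W}[U']-\dim\mathcal{W}[U]\le\dim E-\dim(U\cap(0\oplus E))$.
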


\begin{proof}
Let $\mathcal{W}$ be defined as above. According to Theorem \ref{mkt}, $\ncrank(\mathcal{W})$ equals the minimum size of a cover for $\mathcal{W}$. That is, it is the minimum value of ${\rm dim}(C^\perp) + {\rm dim}(D)$ over all $C \subseteq \mathbb{F}^n \oplus E$ and $D \subseteq \mathbb{F}^n \oplus F$ satisfying $\mathcal{W}[C] \subseteq D$. Since $${\rm dim}(C^\perp) = {\rm dim}(\mathbb{F}^n \oplus E) - {\rm dim}(C) = n + {\rm dim}(E) - {\rm dim}(C),$$ we get that the matricial path capacity between $E$ and $F$ equals the minimum value of $${\rm dim}(E) - {\rm dim}(C) + {\rm dim}(D)$$ over all $C \subseteq \mathbb{F}^n \oplus E$ and $D \subseteq \mathbb{F}^n \oplus F$ satisfying $\mathcal{W}[C] \subseteq D$.

Let $(\tilde{E}, \tilde{F})$ be any $(E,F)$-separator. Then $C = \tilde{F}^\perp \oplus E$ and $D = \tilde{E} \oplus \{0\}$ satisfy $\mathcal{W}[C] \subseteq D$, so by the last paragraph
\begin{eqnarray*}
MPC_{\mathcal{V}}(E,F) &\leq& {\rm dim}(E) - {\rm dim}(C) + {\rm dim}(D)\\
&=&{\rm dim}(E) - (n - {\rm dim}(\tilde{F}) + {\rm dim}(E)) + {\rm dim}(\tilde{E})\\
&=& {\rm dim}(\tilde{E}) + {\rm dim}(\tilde{F}) - n\\
&=& {\rm dim}(\tilde{E} \cap \tilde{F}).
\end{eqnarray*}
Since $\tilde{E}$ and $\tilde{F}$ were arbitrary, this shows that the matricial path capacity between $E$ and $F$ is less than or equal to the minimum size of an $(E,F)$-separator.

Conversely, choose $C \subseteq \mathbb{F}^n \oplus E$ and $D \subseteq \mathbb{F}^n \oplus F$ that satisfy $\mathcal{W}[C] \subseteq D$. We normalize them in two steps, both of which either reduce or leave unaffected the quantity ${\rm dim}(E) - {\rm dim}(C) + {\rm dim}(D)$. First, define $C' = C + (\{0\} \oplus E)$ and $D' = D + (E \oplus \{0\})$. We still have $\mathcal{W}[C'] \subseteq D'$, but the dimensions change: $${\rm dim}(C') = {\rm dim}(C) + {\rm dim}(E) - {\rm dim}(C \cap (\{0\} \oplus E))$$ and $${\rm dim}(D') = {\rm dim}(D) + {\rm dim}(E) - {\rm dim}(D \cap (E \oplus \{0\})).$$ But the map $\left[\begin{matrix}I&\iota\\ \pi&0\end{matrix}\right]$ takes $C \cap (\{0\} \oplus E)$ isomorphically (i.e., without loss of dimension) into $D \cap (E \oplus \{0\})$, so the increase to ${\rm dim}(C)$ is at least as great as the increase to ${\rm dim}(D)$; that is, $${\rm dim}(E) - {\rm dim}(C) + {\rm dim}(D) \geq {\rm dim}(E) - {\rm dim}(C') + {\rm dim}(D').$$ 

Next, write $C' = C_0 \oplus E$ and define $C'' = C' \cap (F^\perp \oplus E) = (C_0 \cap F^\perp) \oplus E$ and $D'' = D' \cap (\mathbb{F}^n \oplus \{0\})$. The condition $\mathcal{W}[C''] \subseteq D''$ is still preserved, and the $C$ dimension now drops by $${\rm dim}(C') - {\rm dim}(C'') = {\rm dim}(C_0) - {\rm dim}(C_0 \cap F^\perp) = {\rm dim}(\pi(C_0)).$$ But for every $v \in C_0$ we have $$\left[\begin{matrix}v\\ \pi(v)\end{matrix}\right] = \left[\begin{matrix}I&\iota\\ \pi&0\end{matrix}\right]\left[\begin{matrix}v\\ 0\end{matrix}\right] \in D',$$ which shows that intersecting $D'$ with $\mathbb{F}^n \oplus \{0\}$ lowers its dimension by at least ${\rm dim}(\pi(C_0))$. Thus again \begin{eqnarray*}{\rm dim}(E) - {\rm dim}(C') + {\rm dim}(D') &\geq& {\rm dim}(E) - {\rm dim}(C'') + {\rm dim}(D'')\\
&=& {\rm dim}(D_1) - {\rm dim}(C_1)\end{eqnarray*} where $C_1 = C_0 \cap F^\perp$ and $D'' = D_1 \oplus \{0\}$. We conclude that this quantity that determines the matricial path capacity is minimized for $C$ and $D$ of the form $C'' = C_1 \oplus E$ and $D'' = D_1 \oplus \{0\}$.

Finally, define $\tilde{E} = D_1$ and $\tilde{F} = C_1^\perp$. The axioms for an $(E,F)$-separator are straightforwardly verified, and we have $${\rm dim}(\tilde{E} \cap \tilde{F}) = {\rm dim}(\tilde{E}) - {\rm dim}(\tilde{F}^\perp) = {\rm dim}(D_1) - {\rm dim}(C_1).$$ So the minimum size of an $(E,F)$-separator is less than or equal to the matricial path capacity between $E$ and $F$. We conclude that the two quantities are equal.
\end{proof}

The proof strategy for Theorem \ref{mmt} is a little different from the one used for Theorem \ref{qmt}. There we had the advantage of being able to describe $\mathcal{V} = \mathcal{V}_R$ using a finite set of pairs of vectors, but the disadvantage of not being able to directly invoke the linear Kőnig's theorem (Theorem \ref{qkt}) because the $\mathcal{W}$ space corresponding to $\mathcal{V}_R$ is not rank one generated.

\section*{Acknowledgement}

Much of the work on which this paper was based was done with the assistance of an AI model (Google Gemini).

\bibliographystyle{amsplain}

\end{document}